\documentclass{amsart}
\usepackage[alphabetic]{amsrefs}
\usepackage{amssymb}
\usepackage{graphicx}
\usepackage{eucal}

\newcommand{\R}{\mathbb{R}}
\newcommand{\N}{\mathbb{N}}
\newcommand{\M}{\mathcal{M}}
\renewcommand{\H}{\mathcal{H}}
\newcommand{\K}{\mathfrak{K}}
\renewcommand{\S}{\mathfrak{S}}
\renewcommand{\P}{\mathfrak{P}}
\newcommand{\Q}{\mathfrak{Q}}
\newcommand{\D}{\mathcal{D}}
\newcommand{\loc}{\mathrm{loc}}
\newcommand{\supp}{\operatorname{supp}}
\renewcommand{\Re}{\operatorname{Re}}
\renewcommand{\Im}{\operatorname{Im}}
\def\Xint#1{\mathchoice 
{\XXint\displaystyle\textstyle{#1}}%
{\XXint\textstyle\scriptstyle{#1}}%
{\XXint\scriptstyle\scriptscriptstyle{#1}}%
{\XXint\scriptscriptstyle\scriptscriptstyle{#1}}%
\!\int} 
\def\XXint#1#2#3{{\setbox0=\hbox{$#1{#2#3}{\int}$} 
\vcenter{\hbox{$#2#3$}}\kern-0.5\wd0}} 
 
\def\dashint{\Xint-}

\renewcommand{\phi}{\varphi}
\renewcommand{\epsilon}{\varepsilon}

\newtheorem{theorem}{Theorem}[section]
\newtheorem{lemma}[theorem]{Lemma}
\newtheorem{proposition}[theorem]{Proposition}
\theoremstyle{definition}
\newtheorem{definition}[theorem]{Definition}
\theoremstyle{remark}
\newtheorem{remark}[theorem]{Remark}
\numberwithin{equation}{section}
\numberwithin{section}{part}

\title[Monotonicity Formulas and Singular Set]{Some New Monotonicity
  Formulas \\ and the Singular
  Set \\ in the Lower Dimensional Obstacle Problem}

\author{Nicola Garofalo}
\address{Department of Mathematics, Purdue University, West Lafayette,
  IN 47907}
\email{garofalo@math.purdue.edu}
\thanks{N.~Garofalo is supported in part by NSF grant DMS-0701001}

\author{Arshak Petrosyan}
\address{Department of Mathematics, Purdue University, West Lafayette,
  IN 47907}
\email{arshak@math.purdue.edu}
\thanks{A.~Petrosyan is supported in part by  NSF grant DMS-0701015}

\subjclass[2000]{35R35}
\keywords{Thin obstacle problem, Signorini problem, singular points,
  Almgren's frequency formula, Weiss's montonicity formula, Monneau's
  monotonicity formula}

\begin{document}
\begin{abstract} We construct two new one-parameter families of monotonicity
formulas to study the free boundary points in the lower dimensional
obstacle problem. The first one is a family of Weiss type formulas
geared for points of any given homogeneity and the second one is a
family of Monneau type formulas suited for the study of singular
points. We show the uniqueness and continuous dependence of the
blowups at singular points of given homogeneity. This allows to
prove a structural theorem for the singular set.

Our approach works both for zero and smooth non-zero lower
dimensional obstacles. The study in the latter case is based on a
generalization of Almgren's frequency formula, first established by
Caffarelli, Salsa, and Silvestre.
\end{abstract}

\maketitle

\section*{Introduction}

\subsection*{The lower dimensional obstacle problem}

Let $\Omega$ be a domain in $\R^n$ and $\M$ a smooth
$(n-1)$-dimensional manifold in $\R^n$ that divides $\Omega$ into two
parts: $\Omega_+$ and $\Omega_-$. For given functions $\phi:\M\to \R$
and $g:\partial\Omega\to\R$ satisfying $g>\phi$ on $\M\cap\partial\Omega$, consider the problem of minimizing the Dirichlet integral
$$
D_\Omega(u)=\int_\Omega |\nabla u|^2 dx
$$
on the closed convex set
$$
\K=\{u\in W^{1,2}(\Omega) \mid \text{$u=g$ on $\partial\Omega$, $u\geq
  \phi$ on $\M\cap\Omega$}\}.
$$
This problem is known as the lower dimensional, or \emph{thin
obstacle problem}, with $\phi$ known as the \emph{thin obstacle}. It
is akin to the \emph{classical obstacle problem}, where $u$ is
constrained to stay above an obstacle $\phi$ which is assigned in
the whole domain $\Omega$. However, whereas the latter is by now
well-understood, the thin obstacle problem still presents
considerable challenges and only recently there has been some
significant progress on it. While we defer  a discussion of the
present status of the problem and of some important open questions
to the last section of this paper, for an introduction to this and
related problems we refer the reader to the book by Friedman
\cite{Fr}*{Chapter 1, Section 11} as well as to the survey of
Ural'tseva \cite{Ur}.

The thin obstacle problem arises in a variety of situations of
interest for the applied sciences. It presents itself in elasticity
(see for instance \cite{KO}), when an elastic body is at rest,
partially laying on a surface $\M$. It also arises in financial
mathematics in situations in which the random variation of an
underlying asset changes discontinuously, see \cite{CT}, \cite{Si}
and the references therein. It models the flow of a saline
concentration through a semipermeable membrane when the flow occurs
in a preferred direction (see \cite{DL}).

When $\M$ and $\phi$ are smooth, it has been proved by Caffarelli
\cite{Ca} that the minimizer $u$ in the thin obstacle problem is of
class $C^{1,\alpha}_\loc(\Omega_\pm\cup\M)$. Since we can make free
perturbations away from $\M$, it is easy to see that $u$ satisfies
$$
\Delta u=0\quad\text{in }\Omega\setminus\M=\Omega_+\cup\Omega_-,
$$
but in general $u$ does not need to be harmonic across $\M$.
Instead, on $\M$, one has the following complementary conditions
$$
u-\phi\geq 0,\quad \partial_{\nu^+}u+\partial_{\nu^-}u\geq 0,\quad
(u-\phi)(\partial_{\nu^+}u+\partial_{\nu^-}u)=0,
$$
where $\nu^\pm$ are the outer unit normals to $\Omega_\pm$ on $\M$.
One of the main objects of study in this problem is the so-called
\emph{coincidence set}
$$
\Lambda(u):=\{x\in\M \mid u(x)=\phi(x)\}
$$
and its boundary (in the relative topology on $\M$)
$$
\Gamma(u):=\partial_{\M}\Lambda(u),
$$
known as the \emph{free boundary}.

A similar problem is obtained when $\M$ is a part of $\partial \Omega$
and one minimizes $D_\Omega(u)$ over the convex set
$$
\K=\{u\in W^{1,2}(\Omega) \mid \text{$u=g$ on
  $\partial\Omega\setminus\M$, $u\geq \phi$ on $\M$}\}.
$$
In this case $u$ is harmonic in $\Omega$ and satisfies the
complementary conditions
$$
u-\phi\geq 0,\quad \partial_\nu u\geq 0,\quad (u-\phi)\partial_\nu
u=0
$$
on $\M$, where $\nu$ is the outer unit normal on $\partial \Omega$.
This problem is known as the \emph{boundary thin obstacle
  problem} or the \emph{Signorini problem}. Note that in the case when
$\M$ is a plane and $\Omega$ and $g$ are symmetric with respect to
$\M$, then the thin obstacle problem in $\Omega$ is equivalent to the boundary
obstacle problem in $\Omega_+$.

\subsection*{Recent developments} There has been a recent surge of
activity in the area of thin obstacle problems since the work of
Athanasopoulos and Caffarelli \cite{AC}, where the optimal
$C^{1,\frac12}$ interior regularity has been established for the
solutions of the Signorini problem with flat $\M$  and $\phi=0$. A
different perspective was brought in with the paper \cite{ACS},
where extensive use was made of the celebrated monotonicity of
Almgren's \emph{frequency function},
$$
N(r,u):=\frac{r\int_{B_r}|\nabla u|^2}{\int_{\partial B_r} u^2},
$$
see \cite{Al}, and also \cite{GL}, \cite{GL2} for generalizations to
variable coefficient elliptic operators in divergence form. The name
frequency comes from the fact that when $u$ is a harmonic function
in $B_1$ homogeneous of degree $\kappa$, then $N(r,u) \equiv
\kappa$. Using such monotonicity of the frequency the authors were
able to show fine regularity properties of the free boundary;
namely, that the set of so-called regular free boundary points is
locally a $C^1$-manifold of dimension $n-2$. When the obstacle is
the function $\phi \equiv 0$ and $\M$ is flat, then a point of the
free boundary is called \emph{regular} if at such point the
frequency attains its least possible value $N(0+,u) = 2 - \frac 12$
(see Lemma \ref{lem:min-homogen} and Definition \ref{def:regFB}
below). For instance the origin is a regular free boundary point for
$\hat u_{3/2}(x) = \Re (x_1 + i\,|x_n|)^{3/2}$. The reader is also referred to
Section~\ref{sec:known-results} for a detailed description of the
main results in \cite{AC} and \cite{ACS}.

Another line of developments has emerged from the fact that in the
particular case
$\Omega_+=\R^n_+:=\R^{n-1}\times(0,\infty)$ and
$\M=\R^{n-1}\times\{0\}$, the Signorini problem can be interpreted as
an obstacle problem for the fractional Laplacian on $\R^{n-1}$
$$
u-\phi\geq 0,\quad (-\Delta_{x'})^{s} u\geq 0,\quad (u-\phi)
(-\Delta_{x'})^{s} u=0
$$
with $s=\frac12$. A more general range of fractional powers $0<s<1$
has been considered by Silvestre \cite{Si}, who has proved the
almost optimal regularity of the solutions: $u\in
C^{1,\alpha}(\R^{n-1})$ for any $\alpha<s$. Subsequently,
Caffarelli, Salsa, and Silvestre \cite{CSS} have established the
optimal $C^{1,s}$ regularity and generalized the free boundary
regularity results of \cite{ACS} to this setting.

One interesting aspect of \cite{CSS} is that the authors are able to
treat the case when the thin obstacle $\phi$ is nonzero. They prove
a generalization of Almgren's monotonicity of the frequency for
solutions of the thin obstacle problem with nonzero thin obstacles,
see Section~\ref{sec:gener-freq-form} below. With this tool they
establish the optimal interior regularity of the solution, and the
regularity of the free boundary at the regular points.

\subsection*{Main results}
In the thin obstacle problem one can subdivide the free boundary
points into three categories: the set of regular points discussed
above, the set $\Sigma(u)$ of the so-called singular points, and the
remaining portion of those free boundary points which are neither
regular, nor singular. As we have mentioned, the papers \cite{ACS},
\cite{Si} and \cite{CSS} study the former set.

The main objective of this paper is the study of the singular free
boundary points. More specifically, using methods from geometric
PDE's we study the structure of the singular set $\Sigma(u)$, which
we now define.

Hereafter, the hypersurface $\M$ is assumed to be the hyperplane
$\{x_n=0\}$. A free boundary point $x_0\in \Gamma(u)$ is called
\emph{singular} if the coincidence set $\Lambda(u)$ has a vanishing
$(n-1)$-Hausdorff density at $x_0$, i.e.
$$
\frac{\H^{n-1}(\Lambda(u)\cap B'_r(x_0))}{\H^{n-1}(B_{r}'(x_0))}\to 0.
$$
We denote by $\Sigma(u)$ the set of singular points. We observe here
that $\Sigma(u)$ is not necessarily a small part of the free
boundary $\Gamma(u)$ in any sense. In fact, it may happen that the
whole free boundary is composed exclusively of singular points. This
happens for instance when $u$ is a harmonic function, symmetric with
respect to $\M$, touching a zero obstacle (see also
Figure~\ref{fig:example} below).

One of the main difficulties in our analysis consists in
establishing the uniqueness of the blowups, which are the limits of
properly defined rescalings of $u$, see \eqref{eq:rescaling} below.
Proving such uniqueness is equivalent to showing that at any $x_0\in
\Sigma(u)$ one has a Taylor expansion
$$
u(x',x_n)-\phi(x')=p_\kappa^{x_0}(x-x_0)+o(|x-x_0|^\kappa),
$$
where $p=p_\kappa^{x_0}$ is a nondegenerate homogeneous polynomial of
a certain order
$\kappa$, satisfying
$$
\Delta p=0,\quad x\cdot\nabla p -\kappa p=0,\quad p(x',0)\geq0,\quad p(x',-x_n)=p(x',x_n),
$$
see Theorems~\ref{thm:k-diff-sing-p} and
\ref{thm:k-diff-sing-p-nonzero} below. The value of $\kappa$ must be
an even integer, and it is obtained from the above mentioned
generalization of Almgren's monotonicity of frequency to the thin
obstacle problem, see Theorems~\ref{thm:almgren} and
\ref{thm:gen-almgren}.

At this point, to put our discussion in a broader perspective, we
recall that for the classical obstacle problem a related Taylor
expansion was originally obtained by Caffarelli and Rivi\`{e}re \cite{CR} in dimension 2. In higher dimensions this expansion was first proved by Caffarelli \cite{Ca2} with the
use of a deep monotonicity formula of Alt, Caffarelli, and Friedman
\cite{ACF}. Subsequently, a different proof based on a simpler
monotonicity formula was discovered by Weiss \cite{We}. More
recently, using the result of Weiss, Monneau \cite{Mo} has derived
yet another monotonicity formula which is tailor made for the study
of the singular free boundary points. He has then used
such formula to prove the above mentioned Taylor expansion at
singular points of the classical obstacle problem.

Now, in the classical obstacle problem the only frequency that
appears is $\kappa =2$.  Specifically, the above mentioned
monotonicity formulas in \cite{ACF}, \cite{We}, \cite{Mo} are only
suitable for $\kappa=2$. In the thin obstacle problem, instead, one
the main complications is that at a singular free boundary point the
frequency $\kappa$ may be an arbitrary even integer $2m$, $m\in\N$.

With this observation in mind, and the objective of studying
singular points, our original desire was to construct an analogue of
Monneau's formula based on Almgren's frequency formula, rather than
on Weiss'. This was suggested by the fact that, at least in
principle,  Almgren's frequency formula does not display the
limitation of the specific value $\kappa = 2$. In the process,
however, we have discovered a new one-parameter family of
monotonicity formulas $\{W_\kappa\}$ of Weiss type (see Theorem
\ref{thm:weiss}) which is tailor made for studying the thin obstacle
problem, and that, remarkably, is inextricably connected to
Almgren's monotonicity formula, see
Section~\ref{sec:weiss-type-monot} and \ref{sec:weiss-monneau-type}.
With these new formulas in hand, following Monneau \cite{Mo} we have
discovered another one-parameter family $\{M_\kappa\}$ of
monotonicity formulas (see Theorem \ref{thm:Monn-mon-form}) which
are ad hoc for studying singular free boundary points with frequency
$\kappa=2m$, $m\in\N$. With this result, in turn, we have been able
to establish the desired Taylor expansion mentioned above, thus
obtaining the uniqueness of the blowups. Furthermore, the
monotonicity formulas $\{M_\kappa\}$ allow to establish the
nondegeneracy and continuous dependence of the polynomial
$p_\kappa^{x_0}$ on the singular free boundary point $x_0$ with
frequency $\kappa$.

We should also mention here that in the case of the nonzero thin
obstacle $\phi$ there are additional technical difficulties
introduced by the error terms in the computations. In fact,
Almgren's monotonicity formula in its purest form will not hold in
general, but if $\phi$ is assumed to be $C^{k,1}$ regular we can
establish the monotonicity of the truncated versions $\Phi_k$ of the
frequency functional, see Theorem~\ref{thm:gen-almgren}. This kind
of formula has been used first in \cite{CSS} in the case $k=2$.
Because of the truncation, our approach allows to effectively study
only the free boundary points at which the frequency takes a value
$\kappa<k$.

Finally, a standard argument based on
Whitney's extension theorem implies that the set of singular points is
contained in a countable union of $C^1$ regular manifolds of
dimensions $d=0,1,\ldots, n-2$. In the case of a nonzero obstacle
$\phi\in C^{k,1}$ this result is limited to singular points with
the frequency $\kappa<k$. For a precise formulation, see
Theorems~\ref{thm:sing-points} and \ref{thm:sing-points-nonzero}.

\subsection*{Structure of the paper} When the thin
obstacle $\phi$ is identically zero our constructions and proofs are
most transparent. While the majority of our results continue to hold
for regular nonzero obstacles, the technicalities of the proofs are
overwhelming and may easily distract from the main ideas. For this
reason we have subdivided the paper into two parts:
Part~\ref{part:zero-thin-obstacle} deals exclusively with solutions
with a zero thin obstacle $\phi$, whereas
Part~\ref{part:nonz-thin-obst} deals with a nonzero $\phi$. The
individual structure of these parts is as follows.

\medskip
\vbox{Part~\ref{part:zero-thin-obstacle}: $\phi=0$.
\begin{itemize}
\item In Section~\ref{sec:normalization}
we define the class $\S$ of normalized solutions of the Signorini problem.
\item  In Section~\ref{sec:known-results} we describe the known results,
   including the optimal regularity and the regularity of the free boundary.
\item Section~\ref{sec:singular-set:-main} contains the statements of our
   main results.

\item In Section~\ref{sec:weiss-type-monot} we establish the above mentioned Weiss and
   Monneau type monotonicity formulas.

\item Finally, in Section~\ref{sec:singular-points} we use these
   monotonicity formulas to establish the structure of the singular
   set.
 \end{itemize}
}
\vbox{
Part~\ref{part:nonz-thin-obst}: $\phi\not=0$.
\begin{itemize}
\item In Section~\ref{sec:normalization-1} we describe a method based on
  harmonic extension of the $k$-th Taylor's polynomial of the thin
  obstacle to obtain the main class $\S_k$ of normalized solutions.

\item In Section~\ref{sec:gener-freq-form} we prove a
   form of Almgren's monotonicity formula which generalizes a similar result in \cite{CSS}.

\item In Section~\ref{sec:growth-near-free} using this monotonicity of the generalized frequency we study the growth of
   $u\in\S_k$ near the origin.

\item In Section~\ref{sec:blowups} we establish the existence of blowups.

\item In Section~\ref{sec:free-boundary} we give a classification of free
   boundary points for solutions of the Signorini problem.

\item In Section~\ref{sec:singular-set:-main-1} we state our main
  results, see Theorems~\ref{thm:k-diff-sing-p-nonzero} and
  \ref{thm:sing-points-nonzero} below.

\item In Section~\ref{sec:weiss-monneau-type} we prove two
extended forms  of the Weiss and the Monneau type monotonicity
formulas obtained in the first part of the paper.

\item Finally, in Section~\ref{sec:singular-set:-proofs}, we prove our
   main results.
 \end{itemize}
 }

In closing, we would like to mention that, using the extension
approach developed in \cite{CS} and \cite{CSS}, our technique works
also for solutions of the obstacle problem for the fractional
Laplacian $(-\Delta_{x'})^s$ for any $0<s<1$. However, for the sake
of exposition, here we restrict ourselves to the case $s=1/2$ (i.e.\
the Signorini problem). The consideration of the general case is
deferred to a forthcoming paper.

\subsection*{Acknowledgement} We would like to thank the referee for his/her insightful comments which have contributed to improving the presentation of the paper. It has also led us to provide an affirmative answer to a question which appeared as a conjecture in the first draft of this paper (see the part $\Sigma_\kappa=\Gamma_\kappa$ in Theorems~\ref{thm:sing-points} and \ref{thm:sing-points-nonzero}).

\newpage
\part{Zero Thin Obstacle}
\label{part:zero-thin-obstacle}

\section{Normalization}
\label{sec:normalization}
In this part of the paper we
consider a solution $u$ of the Signorini problem with \emph{zero
  obstacle} on a \emph{flat boundary}, i.e.\ $\phi=0$ and $\M$ a
hyperplane. Since we are interested in the properties of $u$ near a
free boundary point, after possibly a translation, rotation and
scaling, we can assume that $u$ is defined in $B_1^+ \cup B_1'$,
where
$$B_1^+:=B_1\cap \R^n_+,\quad B_1':=B_1\cap
(\R^{n-1}\times\{0\}).
$$
Moreover, $u\in C^{1,\alpha}_{\loc}(B_1^+\cup B_1')$, and it is such
that
\begin{gather}
\label{eq:signorini-1}
\Delta u=0\quad\text{in } B_1^+\\
\label{eq:signorini-2}
u\geq 0,\quad -\partial_{x_n} u\geq 0,\quad u\,\partial_{x_n} u=0\quad
\text{on }B_1'\\
\label{eq:0-fbp} 0\in\Gamma(u):= \partial \Lambda(u) :=
\partial\{(x',0)\in B_1' \mid u(x',0)=0\},
\end{gather}
where, we recall, $\Lambda(u)\subset B_1'$ is the coincidence set,
and the free boundary $\Gamma(u)$ is the topological boundary of
$\Lambda(u)$ in the relative topology of $B_1'$.
\begin{definition}  Throughout the paper we denote by $\S$ the class of solutions of the
  normalized Signorini problem
  \eqref{eq:signorini-1}--\eqref{eq:0-fbp}.
\end{definition}
Note that we may actually extend $u\in\S$ by even symmetry to $B_1$
\begin{equation}\label{eq:sym}
u(x',-x_n):=u(x',x_n).
\end{equation}
Then the resulting function will satisfy
\begin{alignat*}{2}
  \Delta u&\leq 0&\quad&\text{in } B_1\\
  \Delta u&=0&&\text{in }B_1\setminus\Lambda(u)\\
  u\,\Delta u&= 0&&\text{in } B_1.
\end{alignat*}
It will also be useful to note the following direct relation between
$\Delta u$ and $\partial_{x_n} u$:
$$
\Delta u=2(\partial_{x_n}u)\, \H^{n-1}\big
|_{\Lambda(u)}\quad\text{in }\D'(B_1).
$$
Note that here by $\partial_{x_n} u$ on $\{x_n=0\}$ we understand the limit $\partial_{x_n^+} u$ from inside $\{x_n>0\}$. We keep this convention throughout the paper.


\section{Known Results}
\label{sec:known-results}

\subsection{Optimal regularity} For the solutions of the Signorini problem, finer regularity results are known. It
has been proved by Athanasopoulos and Caffarelli \cite{AC} that in fact $u\in
C^{1,\frac12}_\loc$ in $B_1^+\cup B'_1$. This is the optimal regularity as
one can see from the explicit example of a solution $\hat u_{3/2}$ given
by
\begin{equation}\label{eq:reg-glob-sol}
\hat u_{3/2}(x)=\Re (x_1+i\,|x_n|)^{3/2}.
\end{equation}
Below we indicate the main steps in the proof of this optimal
regularity, by following the approach developed by Athanasopoulos,
Caffarelli, and Salsa \cite{ACS} and Caffarelli, Salsa, and
Silvestre \cite{CSS}. The main analysis is performed by considering
the \emph{rescalings}
\begin{equation}\label{eq:rescaling}
u_r(x):=\frac{u(rx)}{\left(\frac1{r^{n-1}}\int_{\partial B_r} u^2\right)^{1/2}},
\end{equation}
and studying the limits as $r\to 0+$, known as the \emph{blowups}.
We emphasize that from the definition \eqref{eq:rescaling} one has
\begin{equation}\label{eq:one} \|u_r\|_{L^2(\partial B_1)}=1.
\end{equation}
Note that generally the blowups might be different over different
subsequences $r=r_j\to 0+$. The following monotonicity formula plays
a fundamental role in controlling the rescalings.

\begin{theorem}[Monotonicity of the Frequency]
\label{thm:almgren} Let $u$ be a nonzero solution of
\text{\eqref{eq:signorini-1}--\eqref{eq:signorini-2}}, then the frequency of
$u$
$$
r \mapsto N(r,u):=\frac{r\int_{B_r}|\nabla u|^2}{\int_{\partial B_r}
u^2}
$$
is nondecreasing for $0<r<1$. Moreover, $N(r,u)\equiv \kappa$ for
$0<r<1$ if and only if $u$ is homogeneous of degree $\kappa$ in
$B_1$, i.e.
$$
x\cdot\nabla u-\kappa u=0\quad\text{in }B_1.
$$
\end{theorem}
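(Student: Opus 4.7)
The plan is to follow the classical Almgren frequency argument, adapted to the Signorini setting via the even reflection of $u$ across $\{x_n=0\}$. After this extension, $\Delta u\le 0$ as a Radon measure supported on $\Lambda(u)\subset B_1'$, with the explicit representation $\Delta u = 2\,\partial_{x_n}u\cdot \H^{n-1}|_{\Lambda(u)}$ recorded above. Introduce
$$H(r)=\int_{\partial B_r} u^2,\qquad D(r)=\int_{B_r}|\nabla u|^2,$$
so that $N(r,u)=rD(r)/H(r)$. The goal is to express $(\log N)'(r)$ as a Cauchy--Schwarz remainder, which makes nonnegativity manifest.

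First I would differentiate $H$ via polar coordinates to obtain $H'(r)=\tfrac{n-1}{r}H(r)+2\int_{\partial B_r} u\,u_\nu$, and apply Green's identity $\int_{\partial B_r}u\,u_\nu=\int_{B_r}|\nabla u|^2+\int_{B_r}u\,\Delta u$. The crucial observation is that $u\,\Delta u=0$ as a measure, because $\Delta u$ is concentrated on $\Lambda(u)=\{u=0\}$; consequently $D(r)=\int_{\partial B_r}u\,u_\nu$. Next I would apply the Pohozaev/Rellich identity with the radial vector field $X(x)=x$, namely
$$r\int_{\partial B_r}|\nabla u|^2 = (n-2)\int_{B_r}|\nabla u|^2 + 2r\int_{\partial B_r}u_\nu^2 - 2\int_{B_r}(x\cdot\nabla u)\,\Delta u.$$
The interior term again vanishes: on $\Lambda(u)$ one has $x_n=0$, and since $u\ge 0$ attains its minimum on $\Lambda(u)$ with $u(\cdot,0)$ of class $C^{1,\alpha}$, the tangential gradient $\nabla_{x'}u$ also vanishes there, so $x\cdot\nabla u\equiv 0$ on $\supp(\Delta u)$. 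This yields $D'(r)=\tfrac{n-2}{r}D(r)+2\int_{\partial B_r}u_\nu^2$.

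Assembling the logarithmic derivative,
$$\frac{N'(r)}{N(r)}=\frac{1}{r}+\frac{D'(r)}{D(r)}-\frac{H'(r)}{H(r)}=\frac{2}{D(r)H(r)}\left(\int_{\partial B_r}u_\nu^2\int_{\partial B_r}u^2-\Big(\int_{\partial B_r}u\,u_\nu\Big)^{\!2}\right)\ge 0$$
by Cauchy--Schwarz, which gives the monotonicity. Equality on a subinterval forces $u_\nu$ and $u$ to be proportional on each $\partial B_r$; reading off the proportionality constant from $N\equiv\kappa$ yields $u_\nu=(\kappa/r)u$, and radial integration gives $x\cdot\nabla u=\kappa u$, while the converse is Euler's identity. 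The main technical obstacle I anticipate is rigorously justifying the Pohozaev identity when $\Delta u$ is only a (signed) Radon measure and $u$ is merely $C^{1,\alpha}$ up to $B_1'$: this requires a careful approximation or trace argument, and relies essentially on the complementary conditions to identify $\supp(\Delta u)\subset \Lambda(u)$ and on the tangential regularity of $u$ along $B_1'$ to make sense of, and evaluate to zero, the pairing $(x\cdot\nabla u)\,\Delta u$. A minor companion point is the absolute continuity of $H$ and $D$ in $r$, so that the differential inequality above integrates to the claimed monotonicity.
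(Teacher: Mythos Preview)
Your proposal is correct and follows essentially the same approach as the paper: compute $H'$ and $D'$ via the trace/Rellich identities, observe that the complementary conditions kill the interior terms $u\,\Delta u$ and $(x\cdot\nabla u)\,\Delta u$, and conclude by Cauchy--Schwarz on $\partial B_r$. The paper's justification for the vanishing of $(x\cdot\nabla u)\,\Delta u$ is phrased slightly differently (it notes $(x\cdot\nabla u)\,\partial_{x_n}u=0$ on $B_1'$ directly from the complementary conditions, rather than arguing via the tangential gradient at a minimum), but the content is the same.
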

In the case of the harmonic functions this is a classical result of
Almgren \cite{Al}, which was subsequently generalized to divergence
form elliptic operators with Lipschitz coefficients in \cite{GL},
\cite{GL2}. For the thin obstacle problem this formula has been
first used in \cite{ACS}. We will provide a proof of
Theorem~\ref{thm:almgren} in Section~\ref{sec:weiss-type-monot}. The
reason for doing it is twofold. Besides an obvious consideration of
completeness, more importantly we will prove that
Theorem~\ref{thm:almgren} is in essence equivalent to a new
one-parameter family of monotonicity formulas similar to that of Weiss
in \cite{We}, see Theorem~\ref{thm:weiss}.

The following property of the frequency plays an important role: for
any $0<r,\rho<1$ one has
\begin{equation}\label{eq:scalefree}
N(\rho,u_r) = N(r\rho,u).
\end{equation}

\medskip Suppose now $u\in\S$ and $0\in\Gamma(u)$. Consider the
rescalings $u_r$ as defined in \eqref{eq:rescaling}. Using
\eqref{eq:one}, \eqref{eq:scalefree} and the monotonicity of the
frequency $N$ claimed in Theorem~\ref{thm:almgren}, one easily has
for $r\leq 1$
$$
\int_{B_1}|\nabla u_r|^2=N(1,u_r)=N(r,u)\leq N(1,u).
$$
Now, this implies that there exists a nonzero function $u_0\in
W^{1,2}(B_1)$, which we call a \emph{blowup} of $u$ at the origin,
such that for a subsequence $r=r_j\to 0+$
\begin{equation}\label{eq:urj-u0-conv}
\begin{aligned}
  u_{r_j}\to u_0 &\quad\text{in }W^{1,2}(B_1)\\
  u_{r_j}\to u_0 &\quad\text{in }L^2(\partial B_1)\\
  u_{r_j}\to u_0 &\quad\text{in }C^1_\loc(B^\pm_1\cup B_1').
\end{aligned}
\end{equation}
It is easy to see the weak convergence in $W^{1,2}(B_1)$ and the
strong convergence in $L^2(\partial B_1)$. The third convergence
(and consequently the strong convergence in $W^{1,2}$) follows from
uniform $C^{1,\alpha}_\loc$ estimates on $u_r$ in $B_1^\pm\cup B_1'$
in terms of $W^{1,2}$-norm of $u_r$ in $B_1$, see e.g.\ \cite{AC}.

\begin{proposition}[Homogeneity of blowups]\label{prop:blowup-homogen}
Let $u\in\S$ and denote by $u_0$ any
  blowup of $u$ as described above. Then  $u_0$ satisfies \text{\eqref{eq:signorini-1}--\eqref{eq:signorini-2}}, is homogeneous of
  degree $\kappa=N(0+,u)$, and $u_0\not\equiv 0$.
\end{proposition}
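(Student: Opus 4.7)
The plan is to verify each of the three assertions by passing to the limit along $r_j\to 0+$ in the convergences collected in \eqref{eq:urj-u0-conv}, with the homogeneity being the substantive step.

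\emph{Nondegeneracy.} The normalization \eqref{eq:one} gives $\|u_{r_j}\|_{L^2(\partial B_1)}=1$ for every $j$, so the strong $L^2(\partial B_1)$ convergence in \eqref{eq:urj-u0-conv} forces $\|u_0\|_{L^2(\partial B_1)}=1$; in particular $u_0\not\equiv 0$.

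\emph{Signorini conditions for $u_0$.} Each rescaling $u_{r_j}$ still satisfies \eqref{eq:signorini-1}--\eqref{eq:signorini-2} on $B_1^+\cup B_1'$, since the change of variable $x\mapsto r_j x$ preserves harmonicity and the complementarity conditions, and multiplication by the positive normalization constant preserves them as well. The distributional equation $\Delta u_0=0$ in $B_1^+$ passes to the limit from the $W^{1,2}$ convergence. The three pointwise conditions $u_0\geq 0$, $-\partial_{x_n}u_0\geq 0$, $u_0\,\partial_{x_n}u_0=0$ on $B_1'$ transfer from $u_{r_j}$ to $u_0$ via the $C^1_\loc(B_1^+\cup B_1')$ convergence.

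\emph{Homogeneity.} By Theorem~\ref{thm:almgren}, $N(\cdot,u)$ is nondecreasing on $(0,1)$ and bounded below by $0$, so the limit $\kappa:=N(0+,u)$ exists. For any fixed $\rho\in(0,1)$ the scale invariance \eqref{eq:scalefree} yields
\[
N(\rho,u_{r_j})=N(r_j\rho,u)\longrightarrow \kappa \quad(j\to\infty).
\]
On the other hand, the strong $W^{1,2}(B_1)$ convergence gives $\int_{B_\rho}|\nabla u_{r_j}|^2\to \int_{B_\rho}|\nabla u_0|^2$, and the $C^1_\loc(B_1^\pm\cup B_1')$ convergence (together with the fact that $\partial B_\rho\cap B_1'$ has zero $\H^{n-1}$-measure) yields $\int_{\partial B_\rho}u_{r_j}^2\to \int_{\partial B_\rho}u_0^2$. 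Hence $N(\rho,u_0)=\kappa$ for every $\rho\in(0,1)$ at which the denominator is nonzero. Invoking the equality case of Theorem~\ref{thm:almgren}, applied to $u_0$ (which we have just verified is a nonzero solution of \eqref{eq:signorini-1}--\eqref{eq:signorini-2}), concludes that $u_0$ is homogeneous of degree $\kappa$ in $B_1$.

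\emph{Anticipated obstacle.} The delicate point is guaranteeing $\int_{\partial B_\rho}u_0^2>0$ for a sufficient set of radii $\rho$. After even reflection across $\{x_n=0\}$ the function $u_0$ is subharmonic and not identically zero in $B_1$, so unique continuation ensures that the spheres on which $u_0$ vanishes form a set of measure zero; the set of admissible $\rho$'s is thus dense in $(0,1)$. Since $N(\rho,u_0)$ is itself monotone in $\rho$ by Theorem~\ref{thm:almgren}, constancy on a dense set gives constancy everywhere, and the homogeneity characterization in the equality case of Theorem~\ref{thm:almgren} completes the argument.
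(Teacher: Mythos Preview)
Your argument is correct and follows the same route as the paper: pass the Signorini conditions through the $C^1_\loc$ convergence, get $u_0\not\equiv0$ from the $L^2(\partial B_1)$ normalization, and use the scaling identity \eqref{eq:scalefree} together with convergence to show $N(\rho,u_0)\equiv\kappa$, then invoke the equality case of Theorem~\ref{thm:almgren}. Your ``anticipated obstacle'' paragraph addresses a point the paper passes over in silence; one small slip there: after even reflection $u_0$ is \emph{super}harmonic ($\Delta u_0\le 0$), not subharmonic, but this does not matter for your conclusion since the relevant unique continuation comes from the harmonicity of $u_0$ in $B_1^+$ (or, more directly, from $(H(r)/r^{n-1})'=2D(r)/r^{n-1}\ge 0$, which forces $u_0\equiv 0$ in $B_{\rho_0}$ whenever $H(\rho_0)=0$).
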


\begin{proof}
The fact that $u_0$ satisfies
\eqref{eq:signorini-1}--\eqref{eq:signorini-2} follows from the
above mentioned $C^{1,\alpha}_\loc$ estimates on $u_r$ in
$B_1^\pm\cup B_1'$.
  For the blowup $u_0$ over a sequence $r_j\to 0+$ we have
  $$
  N(r,u_0)=\lim_{r_j\to 0+}N(r,u_{r_j})=\lim_{r_j\to 0+} N(r r_j,
  u)= N(0+,u)
  $$
  for any $0<r<1$. This implies that $N(r, u_0)$ is a constant. In
  view of the last part of Theorem~\ref{thm:almgren} we
conclude that $u_0$ is a homogeneous function. The fact that
  $u_0\not\equiv 0$ follows from the convergence $u_{r_j}\to
u_0$ in $L^2(\partial B_1)$ and that equality $\int_{\partial B_1}
u_{r_j}^2=1$, implying that $\int_{\partial B_1} u_{0}^2=1$.
\end{proof}

We emphasize that although the blowups at the origin might not be
unique, as a consequence of Proposition~\ref{prop:blowup-homogen}
they all have the same homogeneity.

\begin{lemma}[Minimal homogeneity]\label{lem:min-homogen} Given $u\in\S$
one has
  $$
  N(0+,u)\geq 2-\frac12.
  $$
  Moreover, either
  $$
  N(0+,u)=2-\frac12\quad\text{or}\quad N(0+,u)\geq 2.\qed
  $$
\end{lemma}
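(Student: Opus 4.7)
By Proposition~\ref{prop:blowup-homogen}, any blowup $u_0\not\equiv 0$ of $u\in\S$ at the origin is a homogeneous solution of \eqref{eq:signorini-1}--\eqref{eq:signorini-2} of degree $\kappa=N(0+,u)$; the convergence $u_{r_j}\to u_0$ in $C^1_{\loc}(B_1^\pm\cup B_1')$ ensures that $u_0$ inherits the interior $C^{1,\alpha}_{\loc}$ regularity of Caffarelli \cite{Ca}, and that the Signorini system and the pointwise complementarity pass to the limit. The lemma thus reduces to classifying the admissible homogeneity degrees $\kappa$.

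First I would establish the crude bound $\kappa\geq 1$: since $\nabla u_0$ is homogeneous of degree $\kappa-1$ and bounded near the origin by the $C^{1,\alpha}$ estimate, we need $\kappa\geq 1$. Next, to exclude $\kappa=1$: any $1$-homogeneous harmonic function in $B_1^+$ is linear, and the Signorini conditions on $B_1'$ reduce it to $u_0=-c\,x_n$ with $c>0$ (since $u_0\not\equiv0$). The $C^1_{\loc}$-convergence then gives $\partial_{x_n}u_{r_j}\to -c<0$ uniformly on $B_{1/2}'$, so the complementarity $u_{r_j}\,\partial_{x_n}u_{r_j}=0$ on $B_1'$ forces $u_{r_j}\equiv 0$ on $B_{1/2}'$ for all large $j$. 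But then $0$ is a relative interior point of $\Lambda(u_{r_j})=r_j^{-1}\Lambda(u)$ in $B_1'$, contradicting $0\in\Gamma(u_{r_j})=r_j^{-1}\Gamma(u)$.

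For the sharp statement $\kappa\in\{3/2\}\cup[2,\infty)$ I would analyze the tangential derivatives $w_e:=\partial_e u_0$ for $e\in\R^{n-1}$. Commuting $\partial_e$ with $\Delta$ and with the boundary operators shows that $w_e$ is harmonic in $B_1^+$, vanishes on $\Lambda(u_0)$ (where $u_0\equiv 0$), and satisfies $\partial_{x_n}w_e=0$ on $B_1'\setminus\Lambda(u_0)$ (where $\partial_{x_n}u_0\equiv 0$); even reflection across $\{x_n=0\}$ then makes $w_e$ harmonic in $B_1\setminus\Lambda(u_0)$, vanishing on the ``crack'' $\Lambda(u_0)$, and homogeneous of degree $\kappa-1$. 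The argument then splits into two cases. If some $w_e$ changes sign on $B_1$, then $w_e^+$ and $w_e^-$ are non-negative subharmonic functions with disjoint supports, and the Alt--Caffarelli--Friedman monotonicity formula applied to them, combined with the scaling of a degree-$(\kappa-1)$ function, forces $\kappa-1\geq 1$, i.e., $\kappa\geq 2$. If every $w_e$ is signed, then by the linearity of $e\mapsto w_e$ the set $U:=\{e\in\R^{n-1}:w_e\geq0\}$ is a convex cone with $U\cup(-U)=\R^{n-1}$, and an elementary argument shows that $V:=U\cap(-U)$ is a linear subspace of codimension at most $1$; consequently $u_0$ reduces, modulo a rotation in $\R^{n-1}$, either to a function of $x_n$ alone (excluded by step~2) or to a planar Signorini solution $U(t,x_n)$ in a 2D half-disk. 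A direct separation-of-variables on the disk together with the Signorini complementarity on its diameter identifies the admissible exponents as the discrete set $\{3/2, 2, 3, 7/2, 4, 5, 11/2, \ldots\}\subset\{3/2\}\cup[2,\infty)$.

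The main obstacle is the dichotomy step, particularly the ``signed'' branch: the geometric argument that all $w_e$ being signed forces a reduction to two dimensions, together with the explicit 2D classification (keeping careful track of the sign constraints $u_0\geq 0$ and $-\partial_{x_n}u_0\geq 0$ on the diameter). The ACF application in the sign-changing branch is essentially a scaling computation once the integrability of $|\nabla w_e^\pm|^2/|x|^{n-2}$ near the origin is verified.
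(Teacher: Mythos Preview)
Your outline is correct and is precisely the argument the paper defers to: the paper gives no self-contained proof here but cites \cite{CSS}*{Lemma~6.1} and \cite{ACS}, whose proof proceeds exactly through the tangential derivatives $\partial_e u_0$, the Alt--Caffarelli--Friedman dichotomy (sign-changing $\Rightarrow\kappa\ge2$), and the reduction to a two-dimensional classification in the signed branch. One minor remark: in the signed branch the additional monotonicity $\partial_{x_1}U\ge0$ actually forces $\kappa=3/2$ among the planar solutions (the even- and odd-integer ones have $\partial_{x_1}U$ changing sign and are already absorbed by the ACF branch), so your 2D list is over-inclusive but harmlessly so.
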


For the proof see \cite{CSS}*{Lemma 6.1}.  This follows from the
classification of the homogeneous solutions of the Signorini problem
which are convex in the $x'$-variables. The lower bound is
essentially contained in Silvestre's dissertation \cite{Si}. The
last part of the lemma was first proved in \cite{ACS}.

\medskip
The minimal homogeneity allows to establish the following maximal
growth of the solution near free boundary points, see
\cite{CSS}*{Theorem~6.7}.

\begin{lemma}[Growth estimate] Let $u\in\S$. Then
  $$
  \sup_{B_r} |u|\leq C\, r^{3/2},\quad 0<r<1/2,
  $$
  where $C=C(n,\|u\|_{L^2(B_1)})$.\qed
\end{lemma}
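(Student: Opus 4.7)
The plan is to first extract an $L^2$-growth on spheres from Almgren's monotonicity combined with the minimal frequency $3/2$, and then upgrade to a pointwise bound using the subharmonicity of $u^2$ on the even extension.

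Set $H(r)=\int_{\partial B_r}u^2$ and $D(r)=\int_{B_r}|\nabla u|^2$. By Lemma~\ref{lem:min-homogen} and Theorem~\ref{thm:almgren} we have $N(r,u)\geq 3/2$ for all $0<r<1$. A standard differentiation gives
$$
H'(r)=\frac{n-1}{r}H(r)+2\int_{\partial B_r}u\,\partial_\nu u,
$$
and Green's identity applied to the even extension of $u$, combined with the fact that $\Delta u$ is supported on $\Lambda(u)=\{u=0\}$ (so that $u\,\Delta u=0$), yields $\int_{\partial B_r}u\,\partial_\nu u=D(r)$. Consequently $(\log H)'(r)=(n-1+2N(r,u))/r\geq (n+2)/r$, meaning $r\mapsto H(r)/r^{n+2}$ is nondecreasing on $(0,1)$. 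Since $\int_0^1 H(\rho)\,d\rho=\|u\|_{L^2(B_1)}^2$, the mean value theorem produces some $r_0\in[1/2,1]$ with $H(r_0)\leq 2\|u\|_{L^2(B_1)}^2$, and the monotonicity then delivers $H(r)\leq C(n)\,\|u\|_{L^2(B_1)}^2\,r^{n+2}$ for $0<r\leq 1/2$.

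To convert this into a pointwise bound, I would use the observation that the even extension of $u$ lies in $C^{1,\alpha}_{\loc}(B_1)$, satisfies $\Delta u\leq 0$ and $u\,\Delta u=0$, hence
$$
\Delta(u^2)=2u\,\Delta u+2|\nabla u|^2=2|\nabla u|^2\geq 0,
$$
i.e.\ $u^2$ is subharmonic on $B_1$. Applying the mean value inequality on the ball $B_r(x)\subset B_{2r}$ for any $x\in B_r$ with $r<1/4$ then gives
$$
u(x)^2\leq \frac{1}{|B_r|}\int_{B_r(x)}u^2\leq \frac{1}{|B_r|}\int_0^{2r}H(\rho)\,d\rho\leq C(n)\,\|u\|_{L^2(B_1)}^2\,r^{3},
$$
which is the desired $r^{3/2}$ estimate. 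The range $1/4\leq r<1/2$ is absorbed into the constant using the subharmonic $L^2$-to-$L^\infty$ bound on $B_{1/2}$.

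The main obstacle is this last transition: Almgren's frequency controls only integral quantities, so an additional tool is needed to pass to pointwise values. It is dispatched cleanly here by the subharmonicity of $u^2$, a feature specific to the Signorini problem arising from the fact that the singular part of $\Delta u$ is concentrated exactly where $u$ vanishes, so that the product $u\,\Delta u$ disappears identically.
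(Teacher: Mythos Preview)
Your proof is correct and follows the same two-step strategy the paper uses for the more general Lemma~\ref{lem:gr-est-above} (the paper omits the proof of this particular lemma, deferring to \cite{CSS}): integrate the differential inequality $(\log H)'(r)\geq (n-1+2\kappa)/r$ coming from $N(r,u)\geq\kappa$ to obtain $H(r)\leq C\,r^{n-1+2\kappa}$, then upgrade to a pointwise bound via subharmonicity. The only cosmetic difference is that the paper invokes the subharmonicity of $u^+$ and $u^-$ separately (citing \cite{AC}), whereas you use the subharmonicity of $u^2$ through the identity $\Delta(u^2/2)=|\nabla u|^2$, which the paper itself employs in the proof of Theorem~\ref{thm:almgren}; both routes are equally valid and yield the same conclusion.
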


Ultimately, this leads to the optimal regularity of $u$.

\begin{theorem}[Optimal regularity]
\label{thm:classification-fbp}
Let $u\in\S$ and $0\in \Gamma(u)$. Then $u\in
C^{1,\frac12}_{\loc}(B_{1}^\pm\cup B_1')$ with
$$
\|u\|_{C^{1,\frac12}(B_{1/2}^{\pm}\cup B_{1/2}')}\leq
C(n,\|u\|_{L^2(B_1)}).
$$
\qed
\end{theorem}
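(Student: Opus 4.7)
The strategy is to bootstrap from the optimal $3/2$-power growth estimate of the previous lemma to the full $C^{1,1/2}$ regularity via a standard scaling argument, using harmonicity of $u$ in $B_1^+$ together with the Signorini complementarity on $B_1'$. That the exponent $3/2$ translates exactly into a $C^{1,1/2}$ bound is no coincidence, as the model solution $\hat u_{3/2}$ of \eqref{eq:reg-glob-sol} shows.

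First I would promote the growth estimate at $0$ to one uniform over all free boundary points in $B_{1/2}'$: for every $x_0\in\Gamma(u)\cap B_{1/2}'$ and every $r\leq 1/4$,
\[
\sup_{B_r(x_0)}|u|\leq C r^{3/2},
\]
with $C=C(n,\|u\|_{L^2(B_1)})$. This follows by applying the growth lemma to the translated solution $u(\cdot+x_0)\in\S$, whose $L^2$-norm on $B_{1/2}$ is controlled by $\|u\|_{L^2(B_1)}$ by a trivial covering. Next, for any $y\in B_{1/2}^+\cup B_{1/2}'$, choose a nearest free boundary point $x_0\in\Gamma(u)\cap\overline{B_{1/2}'}$ and set $d:=|y-x_0|$. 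The rescaling $v(x):=d^{-3/2}u(x_0+dx)$ is again a Signorini solution with $\|v\|_{L^\infty(B_1)}\leq C$, so the $C^{1,\alpha}$ estimates up to $B_1'$ for Signorini solutions (Athanasopoulos--Caffarelli \cite{AC}) applied at the point $\xi:=(y-x_0)/d$, which lies at unit distance from $0$, give $|\nabla v(\xi)|\leq C$; unscaling yields the pointwise bound $|\nabla u(y)|\leq Cd^{1/2}$.

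The $1/2$-Hölder modulus of $\nabla u$ is then obtained by a routine dyadic comparison: given $y_1,y_2\in B_{1/2}^+\cup B_{1/2}'$ with $|y_1-y_2|=\rho$, either both points lie at distance $\geq 2\rho$ from $\Gamma(u)$, in which case classical Schauder estimates on a fixed ball of radius comparable to $\rho$ yield $|\nabla u(y_1)-\nabla u(y_2)|\leq C\rho^{1/2}$ (here one uses the Neumann or Dirichlet condition on $B_1'$ away from $\Gamma(u)$), or else at least one of them lies within $2\rho$ of $\Gamma(u)$, and the pointwise gradient bound above combined with the triangle inequality closes the argument. The main obstacle in this scheme is the discontinuous transition of the boundary condition on $B_1'$, from Neumann ($\partial_{x_n}u=0$) off $\Lambda(u)$ to Dirichlet ($u=0$) on $\Lambda(u)$: the induced measure $\Delta u=2(\partial_{x_n}u)\H^{n-1}\big|_{\Lambda(u)}$ prevents a direct application of interior harmonic estimates across $B_1'$. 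The rescaling centered at free boundary points bypasses this, and the sharpness of the exponent $3/2$ (enforced by the minimal homogeneity of Lemma~\ref{lem:min-homogen}) is what makes the target exponent $C^{1,1/2}$ achievable and optimal.
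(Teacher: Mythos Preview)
The paper does not actually prove this theorem: it is stated with a terminal \verb|\qed| and attributed to the literature (Athanasopoulos--Caffarelli \cite{AC}, with the frequency approach in \cite{ACS}, \cite{CSS}). The line ``Ultimately, this leads to the optimal regularity of $u$'' is all the paper offers by way of argument. So there is no proof in the paper to compare against.

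Your sketch is the standard route from the $3/2$-growth estimate to $C^{1,1/2}$ and is essentially correct. Two small technical points are worth tightening. First, when you rescale $v(x)=d^{-3/2}u(x_0+dx)$ and evaluate at $\xi=(y-x_0)/d$, the point $\xi$ lies on $\partial B_1$, not in its interior; to invoke the local $C^{1,\alpha}$ estimate of \cite{AC} at $\xi$ you should define $v$ on $B_2$ (possible once $d\le 1/8$ and $x_0\in B_{1/2}'$, using the growth bound on $B_{2d}(x_0)$) and apply the estimate on $\overline{B_1}$. Second, in the ``far from $\Gamma(u)$'' case of the dyadic comparison, it is worth making explicit that $B_{d_1}(y_1)\cap B_1'$ (with $d_1=\mathrm{dist}(y_1,\Gamma(u))$) is a connected disk not meeting $\Gamma(u)$, hence carries a single boundary condition (Dirichlet or Neumann); an odd or even reflection then makes $u$ harmonic in the full ball, and interior estimates give $|D^2u|\le C d_1^{-1/2}$ there, from which $|\nabla u(y_1)-\nabla u(y_2)|\le C d_1^{-1/2}\rho\le C\rho^{1/2}$. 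With these adjustments your argument is complete.
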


\subsection{Regularity of the free boundary}

Another aspect of the Signorini problem is the study of the free
boundary $\Gamma(u)$. In fact, the starting point in the study of
the regularity of the free boundary  is precisely the optimal
regularity of $u$, which we have described in Theorem
\ref{thm:classification-fbp}. First note that Almgren's frequency
functional (as well as
Theorems~\ref{thm:almgren}--\ref{thm:classification-fbp}) can be
defined at any point $x_0\in \Gamma(u)$ by simply translating that
point to the
 origin: 
 $$
 N^{x_0}(r,u):=\frac{r\int_{B_r(x_0)}|\nabla u|^2}{\int_{\partial
     B_r(x_0)} u^2},
 $$
 where $r>0$ is such that $B_r(x_0)\Subset B_1$. This
 enables us to give the following definitions.

\begin{definition}\label{def:fb-class} Given $u\in\S$, for $\kappa\geq 2-\frac12$ we define
  $$
  \Gamma_\kappa(u):=\{x_0\in\Gamma(u) \mid N^{x_0}(0+,u)=\kappa\}.
  $$
\end{definition}

\begin{figure}[tbp]
\begin{picture}(318,174)(0,0)
  \includegraphics[width=144pt]{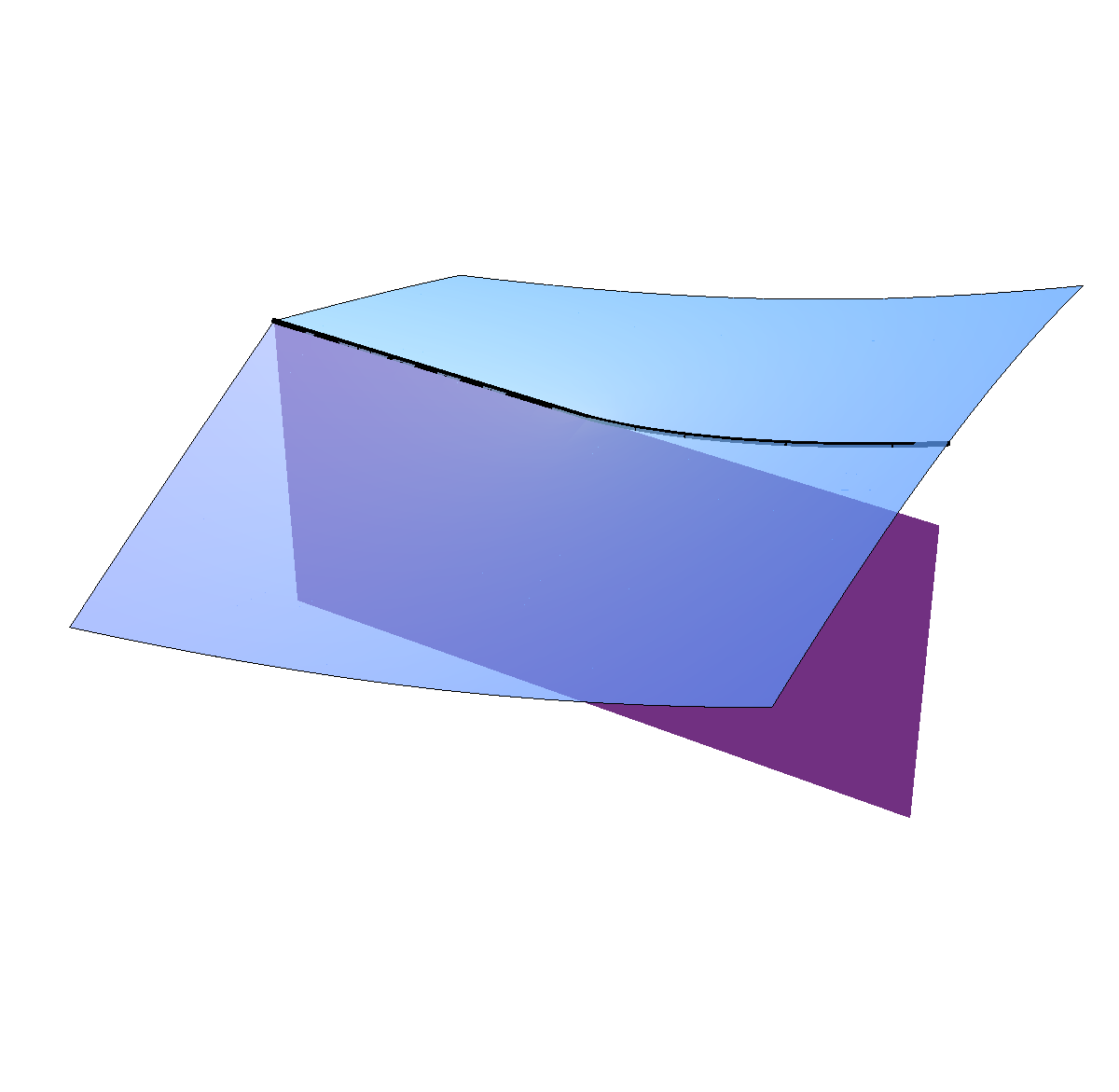}
  \hskip30pt
  \includegraphics[width=144pt]{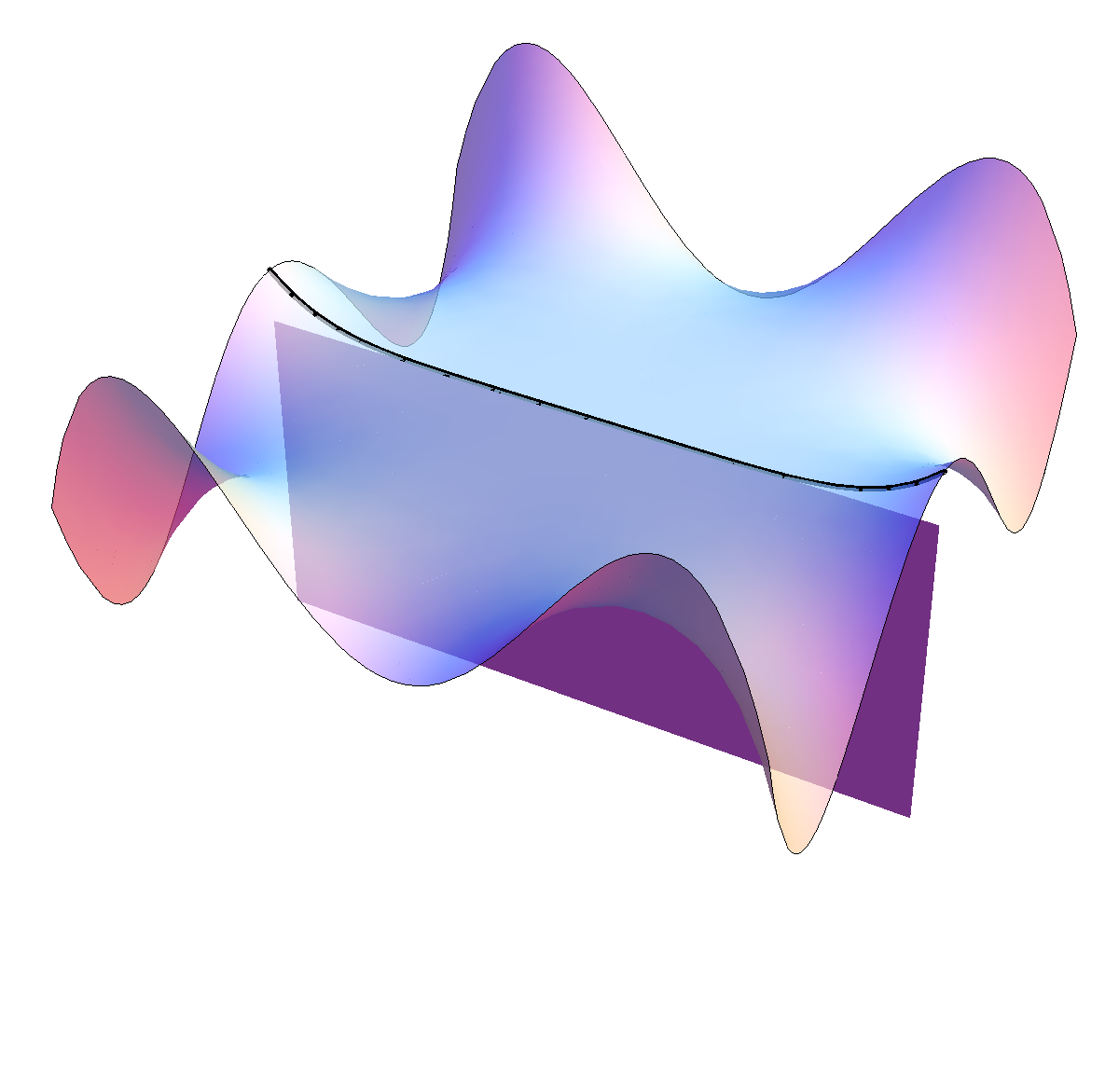}
\end{picture}
\caption{Graphs of $\Re(x_1+i\,|x_2|)^{3/2}$ and $\Re(x_1+i\,|x_2|)^{6}$}
\label{fig:label}
\end{figure}

\begin{remark}\label{rem:missing_values}
One has to point out that the sets $\Gamma_\kappa(u)$ may be
nonempty only for $\kappa$ in a certain set of values. For instance,
Lemma~\ref{lem:min-homogen} implies that $\Gamma_k(u)=\varnothing$
whenever $2-\frac12<\kappa<2$. On the other hand, if one considers
the functions
$$
\hat u_\kappa(x) = \Re (x_1+i\,|x_n|)^{\kappa},\quad\text{for }\kappa\in
\{2m-\frac12 \mid m\in \mathbb N\}\cup\{2m \mid m\in \mathbb N\},
$$
then one has $0\in\Gamma_\kappa(\hat u_\kappa)$, and therefore
$\Gamma_\kappa(\hat u_\kappa)\not=\varnothing$ for any of the above
values of $\kappa$.
\end{remark}

\begin{remark}\label{rem:poss_freq_dim2}
In dimension $n=2$, a simple analysis of homogeneous harmonic
functions in a halfplane shows that, up to a multiple and a mirror reflection, the only
possible solutions of \eqref{eq:signorini-1}--\eqref{eq:signorini-2}
are the functions $\hat u_\kappa$ above and
$$
\hat v_\kappa(x)=\Im (x_1+i\,|x_2|)^\kappa,\quad\text{for }\kappa\in\{2m+1 \mid m\in\N\}.
$$
However, we claim that the values $\kappa\in\{2m+1\mid m\in\N\}$
cannot occur in the blowup for any $u\in\S$. Indeed, since
$0\in\Gamma(u)=\partial\{u(\cdot,0)>0\}$, we may choose a sequence
$r=r_j\to 0+$ so that
$u(\frac12r_j,0)>0$ (or $u(-\frac12r_j,0)>0$). Then
from the complementary condition \eqref{eq:signorini-2} we will have
$\partial_{x_2}u(\frac12r_j,0)=0$ implying that $\partial_{x_2}
u_{r_j}(\frac12,0)=0$. Hence, if $u_0$ is a blowup over a
subsequence of $\{r_j\}$ the $C^1$ convergence will imply that
$\partial_{x_2}u_0(\frac12,0)=0$. However, $\hat v_\kappa$ do not
satisfy this condition.

Thus, the only frequencies $\kappa=N(0+,u)$ that appear in dimension
$n=2$ are $\kappa\in\{2m-\frac12 \mid m\in \mathbb N\}\cup\{2m \mid
m\in \mathbb N\}$. It is plausible that a similar result hold in
higher dimensions, but this is not known to the authors at the time
of this writing. See also our concluding remarks in the last section
of this paper.
\end{remark}

\medskip Of special interest is the case of the smallest possible
value of the frequency $\kappa=2-\frac12$.

\begin{definition}[Regular points]\label{def:regFB} For $u\in\S$ we say that
  $x_0\in\Gamma(u)$ is \emph{regular} if $N^{x_0}(0+,u)=2-\frac12$,
  i.e., if $x_0\in\Gamma_{2-\frac12}(u)$.
\end{definition}

Note that from the Almgren's frequency formula it follows that the
mapping $x_0\mapsto N^{x_0}(0+,u)$ is upper semicontinuous.
Moreover, since $N^{x_0}(0+,u)$ misses values in the interval
$(2-\frac12,2)$, one immediately obtains that
$\Gamma_{2-\frac12}(u)$ is a relatively open subset of $\Gamma(u)$.
The following regularity theorem at regular free boundary points has
been proved by Athanasopoulos, Caffarelli, and Salsa \cite{ACS}.

\begin{theorem}[Regularity of the regular set]\label{thm:regul-points} Let
$u\in\S$, then the free boundary
  $\Gamma_{2-\frac12}(u)$ is locally a $C^{1,\alpha}$ regular
  $(n-2)$-dimensional surface.\qed
\end{theorem}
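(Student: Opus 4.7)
The plan is to deduce the result in three stages: (i) classify the possible blowups at a regular point up to rotations in $\R^{n-1}$, (ii) extract from this classification a uniform cone of monotonicity directions for $u$ near each $x_0\in\Gamma_{2-\frac12}(u)$, showing that $\Gamma(u)$ is locally a Lipschitz $(n-2)$-dimensional graph, and (iii) bootstrap from Lipschitz to $C^{1,\alpha}$ via a boundary Harnack argument applied to quotients of tangential derivatives of $u$.

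First I would classify the nonzero $\tfrac32$-homogeneous solutions of \eqref{eq:signorini-1}--\eqref{eq:signorini-2}. Given such a $u_0$, the goal is to show that, up to rotation in the $x'$-variables and a positive multiplicative constant, $u_0=\hat u_{3/2}$. The idea is to note that for each tangential direction $e\in\R^{n-1}$, the derivative $\partial_e u_0$ is $\tfrac12$-homogeneous and harmonic in $B_1^+$, and to extract sign and convexity information from the complementary conditions on $B_1'$. A Liouville-type argument based on Almgren's frequency (Theorem~\ref{thm:almgren}) together with the lower bound $N(0+,\cdot)\geq \tfrac32$ from Lemma~\ref{lem:min-homogen} applied to suitable tangential increments should force $u_0$ to depend on $x_n$ and only one tangential direction; then a direct computation with homogeneous harmonic functions in a halfplane of $\R^2$ gives the claim.

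Next, the upper semicontinuity of $x_0\mapsto N^{x_0}(0+,u)$ (an immediate consequence of Theorem~\ref{thm:almgren}), combined with the frequency gap $(\tfrac32,2)$ from Lemma~\ref{lem:min-homogen}, shows that $\Gamma_{2-\frac12}(u)$ is relatively open in $\Gamma(u)$. Together with the classification of blowups, it follows that every sequence of rescalings of $u$ centered at $x_0\in\Gamma_{2-\frac12}(u)$ admits a blowup of the form $c(x_0)\,\hat u_{3/2}\bigl(R(x_0)\,\cdot\bigr)$ for some $c(x_0)>0$ and rotation $R(x_0)$ of $\R^{n-1}$; the $C^1_\loc$ convergence in \eqref{eq:urj-u0-conv} and a compactness argument then give continuity of $x_0\mapsto R(x_0)$ in the regular set. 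Since $\partial_e \hat u_{3/2}\geq c_0>0$ on compact subsets for every $e$ in an open cone of tangential directions, this continuity yields, for each $x_0$, a neighborhood $U$ and an open cone $\mathcal{C}$ of directions such that $\partial_e u\geq 0$ in $U\cap(B_1^+\cup B_1')$ for all $e\in\mathcal{C}$. Consequently $\Lambda(u)\cap U$ is a sub-graph in any direction of $\mathcal{C}$, and $\Gamma(u)\cap U$ is the graph of a Lipschitz function of $(n-2)$ variables.

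Finally, to upgrade Lipschitz to $C^{1,\alpha}$, I would apply a boundary Harnack principle, suitably adapted to the mixed Signorini conditions on $B_1'$, to quotients $\partial_e u/\partial_{e_0} u$ where $e_0$ lies in the interior of $\mathcal{C}$ and $e$ is a nearby tangential direction. Both functions are nonnegative and harmonic in $U\cap(B_1^+\cup B_1')$ by the previous step, so the quotient is H\"older continuous up to $\Gamma(u)$; taking $e$ to range over an orthonormal frame then delivers H\"older continuity of the unit normal to $\Gamma(u)$, which is the $C^{1,\alpha}$ conclusion. The main obstacle in this program is step (ii): producing a \emph{uniform} cone of monotonicity directions in a fixed neighborhood of $x_0$. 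This requires a quantitative non-degeneracy estimate of the form $\sup_{B_r(x_0)}|u|\geq c\,r^{3/2}$, which itself has to be deduced from the classification together with Almgren's monotonicity and the minimality of $\kappa=\tfrac32$.
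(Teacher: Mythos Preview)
The paper does not prove this theorem: it is stated as a known result due to Athanasopoulos--Caffarelli--Salsa \cite{ACS} and marked with a \qed\ symbol in lieu of a proof. So there is no argument in the paper to compare against.

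That said, your three-stage outline does track the strategy of \cite{ACS}: classification of $\tfrac32$-homogeneous global solutions, a cone-of-monotonicity/Lipschitz step, and then a boundary Harnack upgrade to $C^{1,\alpha}$. A few remarks on the sketch itself. In step~(i), the classification in \cite{ACS} proceeds by first establishing semiconvexity of $u_0$ in the tangential variables (from semiconvexity of $u$ and its preservation under rescaling) and then reducing to a two-dimensional analysis; your proposed route via tangential derivatives and a Liouville argument is plausible but the details you indicate (applying Lemma~\ref{lem:min-homogen} to ``tangential increments'') are not quite right, since $\partial_e u_0$ need not itself be a solution of the Signorini problem. In step~(ii), you correctly identify the crux --- passing from the pointwise blowup picture to a \emph{uniform} cone of monotonicity in a fixed neighborhood --- and you are right that this needs quantitative nondegeneracy; in \cite{ACS} this is handled by a compactness/improvement-of-flatness type argument rather than by the continuity-of-$R(x_0)$ heuristic you sketch. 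Step~(iii) is essentially as in \cite{ACS}, though the relevant boundary Harnack is for ``slit'' domains (harmonic in $B_1\setminus\Lambda(u)$, vanishing on $\Lambda(u)$) and requires some care to set up.
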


\section{Singular set: statement of main results}
\label{sec:singular-set:-main}

The main objective of this paper is to study the structure of the
so-called singular set of the free boundary. In this section we
state our main results in this direction, Theorems
\ref{thm:k-diff-sing-p} and \ref{thm:sing-points}. The proofs of
these results will be presented in Section~\ref{sec:singular-points}.

\begin{definition}[Singular points] Let $u\in\S$. We say that $0$ is a
  \emph{singular point} of the free boundary $\Gamma(u)$, if
  $$
  \lim_{r\to 0+}\frac{\H^{n-1}(\Lambda(u)\cap B_r')}{\H^{n-1}(B_r')}=0.
  $$
We denote by $\Sigma(u)$ the subset of singular points of
$\Gamma(u)$. We also denote
\begin{equation}\label{def:sigmakappa}
\Sigma_\kappa(u):=\Sigma(u)\cap\Gamma_\kappa(u).
\end{equation}
\end{definition}
Note that in terms of the rescalings \eqref{eq:rescaling}
the condition $0\in\Sigma(u)$ is equivalent to
$$
\lim_{r\to 0+}\H^{n-1}(\Lambda(u_r)\cap B_1')=0.
$$

The following theorem gives a complete characterization of singular points via the value $\kappa=N(0+,u)$ as well as the type of the blowups. In particular, it establishes that
$$
\Sigma_\kappa(u)=\Gamma_\kappa(u)\quad\text{for }\kappa=2m,\ m\in\N.
$$


\begin{theorem}[Characterization of singular
  points]\label{thm:blowup-sing} Let $u\in\S$ and
  $0\in \Gamma_\kappa(u)$. Then the following statements are equivalent:

\begin{enumerate}
\item[(i)] $0\in \Sigma_\kappa(u)$
\item[(ii)] any blowup of $u$ at the origin is a nonzero homogeneous
 polynomial $p_\kappa$ of degree $\kappa$ satisfying
 $$
 \Delta p_\kappa=0,\quad p_\kappa(x',0)\geq 0,\quad
 p_\kappa(x',-x_n)=p_\kappa(x',x_n).
 $$
\item[(iii)] $\kappa=2m$ for some $m\in\N$.
\end{enumerate} 
\end{theorem}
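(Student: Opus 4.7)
The plan is to close the equivalence through the chain $(i)\Rightarrow(ii)\Rightarrow(iii)\Rightarrow(ii)\Rightarrow(i)$. The implications $(i)\Rightarrow(ii)$, $(ii)\Rightarrow(iii)$, and $(ii)\Rightarrow(i)$ are reasonably direct from what is already in the excerpt, while $(iii)\Rightarrow(ii)$ is the main new content and depends on the Weiss- and Monneau-type monotonicity formulas developed later in Section~\ref{sec:weiss-type-monot}. To start, for $(i)\Rightarrow(ii)$, I take any blowup $u_0=\lim_j u_{r_j}$. By Proposition~\ref{prop:blowup-homogen}, $u_0$ is nonzero, solves \eqref{eq:signorini-1}--\eqref{eq:signorini-2} in $B_1$, is homogeneous of degree $\kappa$, and inherits from $u$ the even symmetry in $x_n$ and the sign $u_0\ge 0$ on $B_1'$. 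The density hypothesis rescales to $\H^{n-1}(\Lambda(u_{r_j})\cap B_1')\to 0$, and testing the distributional identity $\Delta u_{r_j}=2(\partial_{x_n}u_{r_j})\,\H^{n-1}\big|_{\Lambda(u_{r_j})}$ against $\phi\in C_c^\infty(B_1)$ gives
$$\Big|\int u_{r_j}\,\Delta\phi\,dx\Big|\le 2\|\phi\|_\infty\,\|\partial_{x_n}u_{r_j}\|_{L^\infty(B_1'\cap\supp\phi)}\,\H^{n-1}(\Lambda(u_{r_j})\cap B_1')\longrightarrow 0,$$
the middle factor being uniformly bounded by the $C^{1,\alpha}_\loc$ estimates. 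Passing to the $L^2$ limit, $\Delta u_0=0$ in $\D'(B_1)$; since $u_0$ is then smooth on $B_1$ and homogeneous of degree $\kappa$, its Taylor expansion at the origin reduces to a single homogeneous polynomial, forcing $\kappa\in\N$ and the conditions listed in (ii).

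For $(ii)\Rightarrow(iii)$, $\kappa\in\N$ and Lemma~\ref{lem:min-homogen} yield $\kappa\ge 2$. If $\kappa$ were odd, then $p_\kappa(\cdot,0)$ would be a homogeneous polynomial of odd degree on $\R^{n-1}$, so $p_\kappa(-x',0)=-p_\kappa(x',0)$ and the sign condition force $p_\kappa(\cdot,0)\equiv 0$; even symmetry in $x_n$ then gives $x_n^2\mid p_\kappa$, and writing $p_\kappa=x_n^2 q$ and reading $\Delta p_\kappa=2q+4x_n\partial_{x_n}q+x_n^2\Delta q=0$ at $x_n=0$ forces $q(\cdot,0)\equiv 0$, hence $x_n^4\mid p_\kappa$. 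Iterating contradicts nontriviality, so $\kappa=2m$. For $(ii)\Rightarrow(i)$, the same descent shows $p_\kappa(\cdot,0)\not\equiv 0$, so $\Lambda(u_0)\cap B_1'=\{p_\kappa(\cdot,0)=0\}$ is the zero set of a nonzero real-analytic function on $\R^{n-1}$ and has zero $\H^{n-1}$-measure. If the density in (i) did not vanish, along a subsequence $\H^{n-1}(\Lambda(u_{r_j})\cap B_1')\ge c>0$; but $C^1_\loc$ convergence expels every $x\in B_1'$ with $u_0(x)>0$ from $\Lambda(u_{r_j})$ for large $j$, so reverse Fatou applied to $\chi_{\Lambda(u_{r_j})}$ gives $\limsup_j\H^{n-1}(\Lambda(u_{r_j})\cap B_1')\le\H^{n-1}(\Lambda(u_0)\cap B_1')=0$, a contradiction.

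For $(iii)\Rightarrow(ii)$, I invoke the Monneau-type monotonicity formula (Theorem~\ref{thm:Monn-mon-form}): for $\kappa=2m$ and every admissible homogeneous harmonic polynomial $p$ of degree $\kappa$ (even in $x_n$, nonnegative on $\{x_n=0\}$), the functional
$$M_\kappa(r;u,p):=\frac{1}{r^{n-1+2\kappa}}\int_{\partial B_r}(u-p)^2$$
is nondecreasing in $r$. Combined with the Weiss-type formula $W_\kappa$ of Theorem~\ref{thm:weiss}, which selects a canonical candidate polynomial $p_\kappa^0$ out of the $L^2$-weak limit of $u(r\cdot)/r^\kappa$, an argument parallel to Monneau's treatment of the classical obstacle problem establishes $M_\kappa(0+;u,p_\kappa^0)=0$ and promotes this $L^2$-closeness to the pointwise expansion $u(x)=p_\kappa^0(x)+o(|x|^\kappa)$ at the origin. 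This forces every blowup of $u$ at $0$ to equal $p_\kappa^0/\|p_\kappa^0\|_{L^2(\partial B_1)}$, establishing (ii).

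The main obstacle is precisely $(iii)\Rightarrow(ii)$: one must upgrade the single frequency datum $N(0+,u)=2m$ to the rigid polynomial structure of the blowup, and Almgren's formula alone does not achieve this, since in principle many non-polynomial homogeneous solutions of \eqref{eq:signorini-1}--\eqref{eq:signorini-2} could have frequency $2m$. The construction of the $M_\kappa$-family tailored to $\kappa=2m$, together with verification of its monotonicity and its interplay with $W_\kappa$, is the principal technical novelty of the paper (Section~\ref{sec:weiss-type-monot}) and is what makes this step possible.
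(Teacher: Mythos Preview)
Your implications $(i)\Rightarrow(ii)$, $(ii)\Rightarrow(iii)$, and $(ii)\Rightarrow(i)$ are essentially correct and parallel the paper's arguments; your divisibility descent for $(ii)\Rightarrow(iii)$ and your reverse-Fatou step for $(ii)\Rightarrow(i)$ are acceptable variants of the Cauchy--Kovalevskaya and open-cover arguments the paper uses.

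The genuine gap is in $(iii)\Rightarrow(ii)$. Your argument is circular: the Monneau functional $M_\kappa(r;u,p)$ requires a comparison polynomial $p\in\P_\kappa$, and your ``canonical candidate $p_\kappa^0$'' is supposed to arise as a limit of $u(r\cdot)/r^\kappa$. But the Weiss formula $W_\kappa$ only tells you such a limit is $\kappa$-homogeneous and solves the Signorini problem---it does \emph{not} tell you the limit is harmonic, hence a polynomial. You are therefore assuming (ii) in order to manufacture the $p_\kappa^0$ you feed into $M_\kappa$ to prove (ii). Indeed, the uniqueness argument you are imitating (Theorem~\ref{thm:uniq-blowup-sing}) explicitly \emph{cites} Theorem~\ref{thm:blowup-sing} to place the homogeneous blowup in $\P_\kappa$ before invoking $M_\kappa$; that theorem cannot in turn be proved this way.

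The paper closes $(iii)\Rightarrow(ii)$ by a direct Liouville-type lemma (Lemmas~\ref{lem:2m-homogen}--\ref{lem:Monn-homogen-harm}) that uses neither $W_\kappa$ nor $M_\kappa$: if $v$ is $2m$-homogeneous, superharmonic on $\R^n$, and harmonic off $\{x_n=0\}$, then $v$ is harmonic everywhere. One tests the nonpositive measure $\mu=\Delta v$ against $\psi(|x|)P(x)$ with $P$ a $2m$-homogeneous harmonic polynomial strictly positive on $\{x_n=0\}\setminus\{0\}$, for instance $P=\sum_{j=1}^{n-1}\Re(x_j+ix_n)^{2m}$. The shared homogeneity $x\cdot\nabla P=2mP$ and $x\cdot\nabla v=2mv$ makes the pairing vanish after integration by parts, so positivity of $P$ on $\supp\mu$ forces $\supp\mu\subset\{0\}$, and a homogeneity count ($\mu$ is $(2m-2)$-homogeneous while $\delta_0$ is $(-n)$-homogeneous) rules out any Dirac contribution. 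This elementary step is what your proof is missing.
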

\begin{proof} (i) $\Rightarrow$ (ii) The rescalings $u_r$ satisfy
  $$
  \Delta u_r=2(\partial_{x_n} u_r)
  \H^{n-1}\big|_{\Lambda(u_r)}\quad\text{in }\D'(B_1).
  $$
  Since $|\nabla u_r|$ are locally uniformly bounded in $B_1$ by
  \eqref{eq:urj-u0-conv}  and 
  $\H^{n-1}(\Lambda(u_r)\cap B_1)\to 0$, the formula above implies that 
  $\Delta u_r$ converges weakly to $0$ in $\D(B_1)$ and therefore any
  blowup $u_0$ must be harmonic in $B_1$. On the other hand, by
  Proposition~\ref{prop:blowup-homogen}, the function $u_0$ is
  homogeneous in $B_1$ and therefore can be extended by homogeneity to
  $\R^n$. The resulting extension will be harmonic in $\R^n$ and, being
  homogeneous, will have at most a polynomial growth at infinity. Then
  by the Liouville theorem we conclude that $u_0$ must be a
  homogeneous harmonic polynomial $p_\kappa$ of a certain integer degree $\kappa$. We
  also have that $p_\kappa\not\equiv 0$ in
  $\R^n$ by Proposition~\ref{prop:blowup-homogen}. The properties of
  $u$ also imply that that 
  $p_\kappa(x',0)\geq 0$ for all $x'\in\R^{n-1}$ 
  and $p_\kappa(x',-x_n)=p_\kappa(x',x_n)$ for all $x=(x',x_n)\in\R^n$.

\medskip\noindent
(ii) $\Rightarrow$ (iii) Let $p_\kappa$ be a blowup of $u$ at the origin. If $\kappa$ is odd, the nonnegativity of $p_\kappa$ on $\R^{n-1}\times\{0\}$
  implies that $p_\kappa$ vanishes on $\R^{n-1}\times\{0\}$
  identically. On the other hand, from the even symmetry we also have
  that $\partial_{x_n} p_\kappa\equiv 0$  on $\R^{n-1}\times\{0\}$. Since
  $p_\kappa$ is harmonic in $\R^n$,  the Cauchy-Kovalevskaya theorem
  implies that $p_\kappa\equiv 0$ in $\R^n$, contrary to the assumption. Thus, $\kappa\in\{2m\mid m\in\N\}$.

\medskip\noindent
(iii) $\Rightarrow$ (ii) The proof is an immediate corollary of the following Liouville type result.

\begin{lemma}\label{lem:2m-homogen} Let $v$ be a $\kappa$-homogeneous global  solution of the thin obstacle problem 
in $\R^n$ with $\kappa=2m$ for $m\in\N$. Then $v$ is a homogeneous harmonic polynomial.
\end{lemma}

Since $\Delta v=2(\partial_{x_n} v) \H^{n-1}\big|_{\Lambda(v)}$ on $\R^n$, with $\partial_{x_n}v\leq 0$ on $\{x_n=0\}$, this is a particular case of the following lemma, which is essentially Lemma~7.6 in Monneau \cite{Mo2}, with an almost identical proof.
 
\begin{lemma}\label{lem:Monn-homogen-harm} Let $v\in W^{1,2}_{\loc}(\R^n)$ satisfy $\Delta v\leq 0$ in $\R^n$ and $\Delta v=0$ in $\R^n\setminus\{x_n=0\}$. If $v$ is homogeneous of degree $\kappa=2m$, $m\in\N$, then $\Delta v=0$ in $\R^n$.
\end{lemma}

\begin{proof} By assumption, $\mu:=\Delta v$ is a nonpositive measure, living on $\{x_n=0\}$. We are going to show that $\mu=0$. To this end, let $P$ be a $2m$-homogeneous harmonic polynomial, which is positive on $\{x_n=0\}\setminus\{0\}$. For instance, take 
$$
P(x)=\sum_{j=1}^{n-1}\Re(x_j+i x_n)^{2m}.
$$	
Further, let $\psi\in C^\infty_0(0,\infty)$ with $\psi\geq 0$ and $\Psi(x)=\psi(|x|)$. Then we have
\begin{align*}
-\langle \mu, \Psi P \rangle &= -\langle \Delta v, \Psi P \rangle=\int_{\R^n} \nabla  v\cdot \nabla (\Psi P)\\
&=\int_{\R^n} \Psi \nabla v\cdot \nabla P + P \nabla v\cdot \nabla \Psi\\
&=\int_{\R^n} -\Psi v \Delta P - v \nabla \Psi\cdot \nabla P+ P \nabla v\cdot \nabla \Psi \\
&=\int_{\R^n} -\Psi v \Delta P-\frac{\psi'(|x|)}{|x|} v (x\cdot \nabla P)+\frac{\psi'(|x|)}{|x|} P(x\cdot \nabla v)=0,
\end{align*}
where in the last step we have used that $\Delta P=0$, $x\cdot\nabla P=2m P$, $x\cdot\nabla v=2m v$.
This implies that the measure $\mu$ is supported at the origin.  Hence $\mu=c \delta_0$, where $\delta_0$ is the Dirac's delta. On the other hand, $\mu$ is $2(m-1)$-homogeneous and $\delta_0$ is $(-n)$-homogeneous and therefore $\mu=0$.
\end{proof}

\noindent
(ii) $\Rightarrow$ (i) Suppose that $0$ is not a singular point and that over some sequence $r=r_j\to 0+$ we have $\H^{n-1}(\Lambda (u_r)\cap B_1')\geq \delta>0$. Taking a subsequence if necessary, we may assume that $u_{r_j}$ converges to a blowup $u_0$. We claim that 
$$
\H^{n-1}(\Lambda (u_0)\cap B_1')\geq \delta>0.
$$
Indeed, otherwise there exists an open set $U$ in $\R^{n-1}$ with $\H^{n-1}(U)<\delta$ so that $\Lambda (u_0)\cap \overline{B_1'} \subset U$. Then for large $j$ we must have $\Lambda (u_{r_j})\cap \overline{B_1'} \subset U$, which is a contradiction, since $\H^{n-1}(\Lambda (u_{r_j})\cap \overline{B_1'})\geq \delta > \H^{n-1}(U)$. But then $u_0$ vanishes identically on $\R^{n-1}\times\{0\}$ and consequently on $\R^n$ by the Cauchy-Kovalevskaya theorem. This completes the proof of the theorem. 
\end{proof}

\begin{definition}
  Throughout the rest of the paper we denote  by $\P_\kappa$, $\kappa=2m$, $m\in \N$, the class of
  $\kappa$-homogeneous harmonic polynomials described in statement (ii) of
  Theorem~\ref{thm:blowup-sing}.
\end{definition}

\begin{theorem}[$\kappa$-differentiability at singular
  points]\label{thm:k-diff-sing-p} Let $u\in\S$ and
  $0\in\Sigma_\kappa(u)$ with $\kappa=2m$, $m\in\N$. Then there exists a
  \emph{nonzero} $p_\kappa\in\P_\kappa$ such that
  $$
  u(x)=p_\kappa(x)+o(|x|^\kappa).
  $$
  Moreover, if for $x_0\in\Sigma_\kappa(u)$ the polynomial
  $p_\kappa^{x_0}\in\P_\kappa$ is such that we have the Taylor
  expansion
  $$
  u(x)=p_\kappa^{x_0}(x-x_0)+o(|x-x_0|^\kappa),
  $$
  then $p_\kappa^{x_0}$ depends continuously on
  $x_0\in\Sigma_\kappa(u)$.
\end{theorem}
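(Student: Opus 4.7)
The plan is to derive the Taylor expansion from a uniqueness-of-blowups statement, and to derive this in turn from the Monneau-type monotonicity formula $M_\kappa$ announced in Theorem~\ref{thm:Monn-mon-form}. By analogy with Monneau's original argument for the classical obstacle problem \cite{Mo}, the relevant functional should be
$$
M_\kappa(r,u,p) \;:=\; \frac{1}{r^{n-1+2\kappa}}\int_{\partial B_r}(u-p)^2 \;=\; \int_{\partial B_1}\bigl(\tilde u_r - p\bigr)^2, \qquad p\in\P_\kappa,
$$
where $\tilde u_r(x) := u(rx)/r^\kappa$, with the key property that $r\mapsto M_\kappa(r,u,p)$ is nondecreasing for every admissible $p$.

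The first step is to replace the $L^2$-normalized rescalings $u_r$ of \eqref{eq:rescaling} with the homogeneous rescalings $\tilde u_r$. This requires a two-sided control $c\le \|\tilde u_r\|_{L^2(\partial B_1)}\le C$ for small $r>0$. The upper bound follows from the optimal growth (Theorem~\ref{thm:classification-fbp}), while the lower \emph{non-degeneracy} bound should be extracted from the Weiss-type formula $W_\kappa$ of Theorem~\ref{thm:weiss}: since $0\in\Sigma_\kappa(u)\subset\Gamma_\kappa(u)$, Theorem~\ref{thm:blowup-sing} guarantees that every subsequential limit of $u_r$ is a nonzero element of $\P_\kappa$, and this rigidity must prevent $\|\tilde u_r\|_{L^2(\partial B_1)}$ from collapsing to zero. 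In particular $u_r$ and $\tilde u_r$ have the same blowups up to a positive bounded factor, and each subsequential limit of $\tilde u_r$ lies in $\P_\kappa$.

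Now for uniqueness, let $p\in\P_\kappa$ be any subsequential blowup of $\tilde u_r$ along $r_j\to 0+$. Then $M_\kappa(r_j,u,p)=\|\tilde u_{r_j}-p\|_{L^2(\partial B_1)}^2\to 0$, and the monotonicity of $M_\kappa$ forces $M_\kappa(0+,u,p)=0$, so $\tilde u_r\to p$ in $L^2(\partial B_1)$ along every sequence $r\to 0+$. Combined with the $C^1_\loc$ convergence in $B^\pm_1\cup B_1'$ coming from \eqref{eq:urj-u0-conv}, this yields the expansion $u(x)=p_\kappa(x)+o(|x|^\kappa)$ with $p_\kappa := p$. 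For the continuous dependence of $p_\kappa^{x_0}$ on $x_0$, I would run the same scheme at the translated center, using $M_\kappa^{x_0}(r,u,p)$: given $\varepsilon>0$, choose $r_\varepsilon>0$ with $M_\kappa^{x_0}(r_\varepsilon,u,p_\kappa^{x_0})<\varepsilon$; by continuity of $x\mapsto M_\kappa^{x}(r_\varepsilon,u,p_\kappa^{x_0})$ and monotonicity in $r$, one gets $M_\kappa^{x_0'}(0+,u,p_\kappa^{x_0})\le 2\varepsilon$ for $x_0'\in\Sigma_\kappa(u)$ sufficiently close to $x_0$; the uniqueness step identifies this limit with $\|p_\kappa^{x_0'}-p_\kappa^{x_0}\|_{L^2(\partial B_1)}^2$, and finite-dimensionality of $\P_\kappa$ converts $L^2$-smallness into smallness of the polynomial coefficients.

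The main obstacle is the non-degeneracy $\|\tilde u_r\|_{L^2(\partial B_1)}\ge c>0$: without it the homogeneous rescaling could limit to zero and the whole scheme would be empty. This is precisely the kind of rigidity that the new Weiss family $W_\kappa$ (tailored to the integer homogeneity $\kappa$, unlike Almgren's scale-invariant $N$) is designed to produce. The monotonicity of $M_\kappa$ itself is the second nontrivial ingredient, but following Monneau one should be able to derive it by differentiating $M_\kappa$ in $r$ and reducing to $W_\kappa$ evaluated at the difference $u-p$.
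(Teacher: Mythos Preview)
Your outline matches the paper's argument: uniqueness of the blowup via $M_\kappa(0+,u,p)=0$ and continuity via the translated Monneau functional are precisely Theorems~\ref{thm:uniq-blowup-sing} and~\ref{thm:cont-dep-blowup}. Two points need correcting. First, the upper bound $\|\tilde u_r\|_{L^2(\partial B_1)}\le C$ does not follow from the optimal $C^{1,1/2}$ regularity of Theorem~\ref{thm:classification-fbp}, which only gives $|u|\le C|x|^{3/2}$ and is far too weak for $\kappa=2m\ge 2$; it comes instead from the growth estimate $H(r)\le C\,r^{n-1+2\kappa}$ of Lemma~\ref{lem:gr-est-above}, obtained by integrating Almgren's inequality $N(r,u)\ge\kappa$.

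Second, and more substantively, you correctly identify non-degeneracy as the crux but your proposed mechanism does not work. That the $L^2$-normalized rescalings $u_r$ converge to a nonzero element of $\P_\kappa$ is automatic---they have unit $L^2(\partial B_1)$ norm by construction---and says nothing about whether the ratio $H(r)/r^{n-1+2\kappa}$ connecting $u_r$ to $\tilde u_r$ stays bounded below; the Weiss formula does not deliver this either, since $W_\kappa(r,u)=\frac{H(r)}{r^{n-1+2\kappa}}\bigl(N(r,u)-\kappa\bigr)$ tends to zero regardless. The paper instead uses the Monneau functional a second time (Lemma~\ref{lem:nondeg-sing}): assuming $h_{r_j}:=(r_j^{1-n}H(r_j))^{1/2}=o(r_j^\kappa)$ along some sequence, extract $u_{r_j}\to q_\kappa\in\P_\kappa$ with $\|q_\kappa\|_{L^2(\partial B_1)}=1$; the degeneracy hypothesis forces $M_\kappa(0+,u,q_\kappa)=\|q_\kappa\|^2_{L^2(\partial B_1)}$, so monotonicity gives $\int_{\partial B_r}(u^2-2uq_\kappa)\ge 0$ for all small $r$, and after dividing by $h_r r^\kappa$ and sending $r=r_j\to 0$ this becomes $-\int_{\partial B_1}q_\kappa^2\ge 0$, a contradiction. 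So $M_\kappa$ is used twice---once for non-degeneracy and once for uniqueness---while $W_\kappa$ enters only to confirm $W_\kappa(0+,u)=0$, which feeds the inequality $\frac{d}{dr}M_\kappa\ge\frac{2}{r}W_\kappa(r,u)\ge 0$.
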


We want to point out here that the polynomials $p_\kappa\in\P_\kappa$
can be recovered uniquely from their restriction to $\R^{n-1}\times
\{0\}$. This follows from the Cauchy-Kovalevskaya theorem; see the
proof of the uniqueness part of Lemma~\ref{lem:polynom-ext} in
Part~\ref{part:nonz-thin-obst}. Thus, if $p_\kappa$ is not
identically zero in $\R^n$ then its restriction to
$\R^{n-1}\times\{0\}$ is also nonzero.

Theorem~\ref{thm:k-diff-sing-p} can be used to prove a theorem on the
structure of the singular set, similar to the one of Caffarelli
\cite{Ca2} in the classical obstacle problem. In order to state the
result we define the dimension $d=d_\kappa^{x_0}$ of $\Sigma_\kappa(u)$
at a given point $x_0$ based on the polynomial
$p_\kappa^{x_0}$. Roughly speaking, we expect $\Sigma_\kappa(u)$ to be
contained in a $d$-dimensional manifold near $x_0$.

\begin{definition}[Dimension at the singular
  point]\label{def:dim-sing-point} For a singular point
$x_0\in\Sigma_\kappa(u)$ we denote
$$
d_\kappa^{x_0}:=\dim\{\xi\in\R^{n-1} \mid \xi\cdot \nabla_{x'}
p_\kappa^{x_0}(x',0)=0\text{ for all }x'\in\R^{n-1}\},
$$
which we call the \emph{dimension} of $\Sigma_\kappa(u)$ at $x_0$. Note that
since $p_\kappa^{x_0}\not\equiv 0$ on $\R^{n-1}\times\{0\}$ one has
$$
0\leq d_\kappa^{x_0}\leq n-2.
$$
For $d=0,1,\ldots, n-2$ we define
$$
\Sigma_\kappa^d(u):=\{x_0\in\Sigma_\kappa(u) \mid d^{x_0}_\kappa=d\}.
$$
\end{definition}

\begin{theorem}[Structure of the singular set]\label{thm:sing-points} Let
  $u\in\S$. Then $\Sigma_\kappa(u)=\Gamma_\kappa(u)$ for $\kappa=2m$,
	  $m\in\N$, and every set $\Sigma_\kappa^d(u)$, $d=0,1,\ldots, n-2$ is contained in a countable union of $d$-dimensional $C^1$ manifolds.
\end{theorem}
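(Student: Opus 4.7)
The equality $\Sigma_\kappa(u)=\Gamma_\kappa(u)$ for $\kappa=2m$ follows at once from Theorem~\ref{thm:blowup-sing}: by definition $\Sigma_\kappa\subset\Gamma_\kappa$, and the reverse inclusion is precisely the implication (iii)$\Rightarrow$(i) there. Thus the substance of the theorem is the $C^1$-manifold structure of each stratum $\Sigma_\kappa^d$, and for this I would follow the Caffarelli--Monneau template: apply Whitney's extension theorem to the polynomial data supplied by Theorem~\ref{thm:k-diff-sing-p}, and then invoke the implicit function theorem on a suitable auxiliary map. Identify $\Sigma_\kappa\subset\R^{n-1}\times\{0\}$ with a subset $E\subset\R^{n-1}$ via $x_0=(x_0',0)$ and set $q^{x_0}(y'):=p_\kappa^{x_0}(y',0)$, a nonzero homogeneous polynomial of degree $\kappa$ on $\R^{n-1}$ (nonzero by the Cauchy--Kovalevskaya uniqueness remark following Theorem~\ref{thm:k-diff-sing-p}). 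For $|\alpha|\leq\kappa$ define $f_\alpha(x_0'):=\partial^\alpha q^{x_0}(0)$, so that $f_\alpha\equiv 0$ on $E$ whenever $|\alpha|<\kappa$ (by $\kappa$-homogeneity) and the remaining $f_\alpha$ record the coefficients of $q^{x_0}$; each is continuous on $E$ by the continuous-dependence clause of Theorem~\ref{thm:k-diff-sing-p}.

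The central step is to verify Whitney's compatibility conditions: for every compact $K\subset E$ and $|\beta|\leq\kappa$,
$$R_\beta(x_0',y_0'):=f_\beta(x_0')-\sum_{|\gamma|\leq\kappa-|\beta|}\frac{f_{\beta+\gamma}(y_0')}{\gamma!}(x_0'-y_0')^\gamma=o(|x_0'-y_0'|^{\kappa-|\beta|})$$
uniformly as $x_0',y_0'\in K$ with $|x_0'-y_0'|\to 0$. When $|\beta|=\kappa$ this is mere continuity of $f_\beta$; when $|\beta|<\kappa$, a short computation (using that only multi-indices $\gamma$ with $|\gamma|=\kappa-|\beta|$ contribute) identifies $R_\beta(x_0',y_0')$ with $-\partial^\beta q^{y_0}(x_0'-y_0')$. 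For $\beta=0$ the required pointwise decay follows from the Taylor expansion of Theorem~\ref{thm:k-diff-sing-p} combined with $u(x_0)=0$ on $\Sigma_\kappa\subset\Lambda(u)$ and the identity $p_\kappa^{y_0}(x_0-y_0)=q^{y_0}(x_0'-y_0')$ (valid since $(x_0-y_0)_n=0$), yielding $q^{y_0}(x_0'-y_0')=o(|x_0'-y_0'|^\kappa)$. The main obstacle is to upgrade this pointwise statement, and its analogues for $|\beta|\geq 1$, to a uniform estimate over $y_0\in K$; this is precisely the role of the Monneau-type monotonicity formulas $\{M_\kappa\}$ of Theorem~\ref{thm:Monn-mon-form}, which, applied at varying centers $y_0\in K$, furnish a uniform modulus of convergence in $L^2$ and a uniform nondegeneracy of $p_\kappa^{y_0}$, from which the required decay of $\partial^\beta q^{y_0}$ along secants of $\Sigma_\kappa$ can be extracted.

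Once Whitney's extension theorem yields $F\in C^\kappa(\R^{n-1})$ whose $\kappa$-jet at each $x_0'\in E$ coincides with $q^{x_0}(\cdot-x_0')$, I form the $C^1$ map
$$\Phi(x'):=(\partial^\beta F(x'))_{|\beta|=\kappa-1}:\R^{n-1}\to\R^N,$$
which vanishes on $E$ since $f_\beta\equiv 0$ there for $|\beta|=\kappa-1$. A direct identification, using that $\xi\cdot\nabla_{x'}q^{x_0}$ is a polynomial of degree $\kappa-1$ whose vanishing to all orders at $0$ is equivalent to its vanishing as a polynomial, gives
$$\ker D\Phi(x_0')=\{\xi\in\R^{n-1}:\xi\cdot\nabla_{x'}q^{x_0}\equiv 0\},$$
which by Definition~\ref{def:dim-sing-point} has dimension $d_\kappa^{x_0}$. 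Hence at every $x_0'\in\Sigma_\kappa^d$ the Jacobian $D\Phi(x_0')$ has rank $n-1-d$; selecting $n-1-d$ components of $\Phi$ whose gradients at $x_0'$ are linearly independent and invoking the implicit function theorem shows that, on some neighborhood $U$ of $x_0'$, the set $\Phi^{-1}(0)\supset\Sigma_\kappa^d\cap U$ is contained in a $d$-dimensional $C^1$-manifold. Covering $\Sigma_\kappa^d$ by countably many such neighborhoods, available by separability, completes the proof.
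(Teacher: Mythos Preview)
Your proposal follows the same template as the paper (Whitney extension plus implicit function theorem), and the implicit-function step is carried out correctly; your identification of $\ker D\Phi(x_0')$ with the space defining $d_\kappa^{x_0}$ is clean. The restriction to $\R^{n-1}$ via $q^{x_0}(y')=p_\kappa^{x_0}(y',0)$ is a harmless (indeed natural) variant of the paper's argument in $\R^n$.

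There is, however, one technical point you gloss over that the paper handles with care. You apply Whitney's extension theorem on $E=\Sigma_\kappa$ and verify compatibility on arbitrary compact $K\subset E$; but Whitney requires a \emph{closed} set, and $\Sigma_\kappa$ need not be closed (only $F_\sigma$, since $x_0\mapsto N^{x_0}(0+,u)$ is merely upper semicontinuous). The paper resolves this by first decomposing $\Sigma_\kappa=\bigcup_j E_j$ into closed sets $E_j$ on which uniform two-sided bounds $\tfrac1j\rho^\kappa\le\sup_{\partial B_\rho(x_0)}|u|\le j\rho^\kappa$ hold (Lemma~\ref{lem:sing-set-F-sigma}), and then applies Whitney on each $E_j$ separately. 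This decomposition serves a second purpose: it supplies the uniform nondegeneracy needed for the higher-derivative compatibility. Where you write that the decay of $\partial^\beta q^{y_0}$ along secants ``can be extracted'' from Monneau's formula, the paper's concrete mechanism is a blowup argument on $E_j$: if $R_\beta$ fails to decay along $x_0^i,x^i\in E_j$ with $\rho_i=|x^i-x_0^i|\to 0$, the rescalings $u(x_0^i+\rho_i\,\cdot)/\rho_i^\kappa$ converge to $p_\kappa^{x_0}$ by Theorem~\ref{thm:cont-dep-blowup}, while the two-sided $E_j$-bounds transfer to the limit direction $\xi_0$, forcing $\xi_0\in\Sigma_\kappa(p_\kappa^{x_0})$ and hence $\partial^\alpha p_\kappa^{x_0}(\xi_0)=0$ for all $|\alpha|<\kappa$, a contradiction. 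Your outline is correct in spirit, but the $E_j$ decomposition is the missing piece that makes both Whitney's hypothesis and the compatibility verification go through.
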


The following example provides a small illustration of
Theorem~\ref{thm:sing-points}. Consider the harmonic polynomial
$u(x)=x_1^2 x_2^2-\left(x_1^2+x_2^2\right)x_3^2+\frac13{x_3^4}$ in
$\R^3$. Note that $u\in\P_4\subset\S$. On $\R^2\times\{0\}$, we have
$u(x_1,x_2,0)=x_1^2x_2^2$ and therefore the coincidence set
$\Lambda(u)$ as well as the free boundary $\Gamma(u)$ consist of the
union of the lines $\R\times\{0\}\times\{0\}$ and
$\{0\}\times\R\times\{0\}$. Thus, all free boundary points are
singular. It is straightforward to check that $0\in\Sigma_4^0(u)$
and that the rest of the free boundary points are in
$\Sigma_2^1(u)$,  see Figure~\ref{fig:example}.

\begin{figure}[tbp]
\begin{picture}(144,162)(0,0)
  \put(75,76){\small $(0,0)$}
  \put(134,76){\small $x_1$}
  \put(76,134){\small $x_2$}
  \put(47,52){\small $\Sigma_4^0$}
  \put(59,59){\vector(1,1){10}}
  \put(35,76){\small $\Sigma_2^1$}
  \put(76,35){\small $\Sigma_2^1$}
  \put(76,107){\small $\Sigma_2^1$}
  \put(107,76){\small $\Sigma_2^1$}
  \put(72,0){\vector(0,1){144}}
  \put(0,72){\vector(1,0){144}}
  \put(72,72){\circle*{5}}
\end{picture}
\caption{Free boundary for $u(x)=x_1^2 x_2^2-\left(x_1^2+x_2^2\right)
 x_3^2+\frac13{x_3^4}$ in $\R^3$ with zero thin obstacle on
 $\R^2\times\{0\}$.}
\label{fig:example}
\end{figure}

\section{Weiss and Monneau type  monotonicity formulas}
\label{sec:weiss-type-monot}

In this section we introduce two new one-parameter families of
monotonicity formulas that will play a key role in our analysis.
Before doing so, however, we give a proof of Almgren's frequency
formula since the latter has served as one of our main sources of
inspiration. We refer the reader to the original paper by Almgren
\cite{Al} for the case of harmonic functions, to \cite{GL},
\cite{GL2} for solutions to divergence form elliptic equations, and
to Lemma 1 in \cite{ACS} for the thin obstacle problem.

\begin{proof}[Proof of Theorem~\ref{thm:almgren}]
Let $u\in\S$ and consider the quantities
\begin{equation}\label{eq:IH}
D(r):=\int_{B_r} |\nabla u|^2,\qquad H(r):=\int_{\partial
B_r} u^2.
\end{equation}
Denoting by $u_\nu=\partial_\nu u$, where $\nu$ is the outer unit
normal on $\partial B_r$, we have
\begin{equation}\label{eq:H'}
H'(r)=\frac{n-1}{r}\, H(r)+2\int_{\partial B_r} u u_\nu.
\end{equation}
On the other hand, using that $\Delta (u^2/2)=u\Delta u +|\nabla
u|^2=|\nabla u|^2$ and integrating by parts, we obtain
\begin{equation}\label{eq:D-int-parts}
\int_{\partial B_r} uu_\nu =\int_{B_r} |\nabla u|^2=D(r).
\end{equation}
Further, to compute $D'(r)$ we use Rellich's formula
$$
\int_{\partial B_r} |\nabla u|^2=\frac{n-2}{r}\int_{B_r}|\nabla
u|^2+2\int_{\partial B_r} u_\nu^2-\frac2r\int_{B_r} (x\cdot \nabla
u)\Delta u.
$$
Notice that in view of the fact $(x\cdot \nabla u) u_{x_n}=0$ on
$B_1'$  the last integral in the right-hand side vanishes. Hence,
\begin{equation}\label{eq:D'}
D'(r)=\frac{n-2}{r}\,D(r)+2\int_{\partial B_r} u_\nu^2.
\end{equation}
Thus, as in the classical case of harmonic functions we have
\begin{align*}
  \frac{N'(r)}{N(r)}&=\frac{1}{r}+\frac{D'(r)}{D(r)}-\frac{H'(r)}{H(r)}\\
  &=\frac{1}{r}+\frac{n-2}{r}-\frac{n-1}{r}+2\left\{\frac{\int_{\partial
        B_r} u_\nu^2}{\int_{\partial B_r} u
      u_\nu}-\frac{\int_{\partial B_r} u u_\nu}{\int_{\partial B_r}
      u^2}\right\} \geq 0,
\end{align*}
where we have let $N(r) = N(r,u)$. The last inequality is obtained
form the Cauchy-Schwarz inequality and implies the monotonicity
statement in the theorem. Analyzing the case of equality in
Cauchy-Schwarz, we obtain the second part of the theorem. For
details, see the end of the proof of \cite{ACS}*{Lemma 1}.
\end{proof}

\subsection{Weiss type monotonicity formulas}
\label{sec:weiss-type-monot-1} Here we introduce a new one-parameter
family of monotonicity formulas inspired by that introduced by Weiss
\cite{We} in the study of the classical obstacle problem.  Given
$\kappa\geq 0$, we define a functional $W_\kappa(r,u)$, which is
suited for the study of the blowups at free boundary points where
$N(0+,u)=\kappa$.

\begin{theorem}[Weiss type Monotonicity Formula]\label{thm:weiss} Given $u\in\S$, for any $\kappa\geq 0$ we introduce the function
  $$
  W_\kappa(r,u):=\frac{1}{r^{n-2+2\kappa}}\int_{B_r}|\nabla u|^2
  -\frac{\kappa}{r^{n-1+2\kappa}}\int_{\partial B_r} u^2.
  $$
  For $0<r<1$ one has
  $$
  \frac{d}{dr} W_\kappa(r,u)=\frac{2}{r^{n+2\kappa}}\int_{\partial
    B_r}(x \cdot \nabla u- \kappa\,u)^2.
  $$
  As a consequence, $r\mapsto W_\kappa(r,u)$ is nondecreasing on
  $(0,1)$. Furthermore, $W_\kappa(\cdot,u)$ is constant if and
  only if $u$ is homogeneous of degree $\kappa$.
\end{theorem}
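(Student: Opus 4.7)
The plan is to reduce everything to the three identities that were already extracted in the proof of Almgren's Theorem~\ref{thm:almgren}. Writing $D(r)=\int_{B_r}|\nabla u|^2$ and $H(r)=\int_{\partial B_r}u^2$, we have
\[
W_\kappa(r,u)=r^{-(n-2+2\kappa)}D(r)-\kappa\,r^{-(n-1+2\kappa)}H(r),
\]
and from the previous section we know (i) $H'(r)=\tfrac{n-1}{r}H(r)+2\int_{\partial B_r}uu_\nu$, (ii) $\int_{\partial B_r}uu_\nu=D(r)$ (the crucial step where the Signorini complementary conditions on $B_1'$ are used to kill the would-be boundary term), and (iii) $D'(r)=\tfrac{n-2}{r}D(r)+2\int_{\partial B_r}u_\nu^2$ (obtained from Rellich's identity together with the vanishing of $\int_{B_r}(x\cdot\nabla u)\Delta u$, again by the Signorini conditions).

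First I would apply the product rule to the displayed expression for $W_\kappa(r,u)$ and substitute (i)--(iii). The coefficient of $r^{-(n-1+2\kappa)}D(r)$ collapses from $-(n-2+2\kappa)+(n-2)-2\kappa=-4\kappa$, the coefficient of $r^{-(n+2\kappa)}H(r)$ becomes $\kappa(n-1+2\kappa)-\kappa(n-1)=2\kappa^2$, and the only surviving boundary term is $2r^{-(n-2+2\kappa)}\int_{\partial B_r}u_\nu^2$. This yields
\[
W_\kappa'(r)=2r^{-(n-2+2\kappa)}\!\!\int_{\partial B_r}\!u_\nu^2-4\kappa\,r^{-(n-1+2\kappa)}D(r)+2\kappa^2 r^{-(n+2\kappa)}H(r).
\]

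Next I would match this against the claimed formula by expanding the square on $\partial B_r$. Since $x\cdot\nabla u=ru_\nu$ on $\partial B_r$, we compute
\[
\int_{\partial B_r}(x\cdot\nabla u-\kappa u)^2=r^2\!\!\int_{\partial B_r}\!u_\nu^2-2\kappa r\!\int_{\partial B_r}\!uu_\nu+\kappa^2\!\int_{\partial B_r}\!u^2=r^2\!\!\int_{\partial B_r}\!u_\nu^2-2\kappa r D(r)+\kappa^2 H(r),
\]
where I used (ii) in the middle term. Multiplying by $2r^{-(n+2\kappa)}$ gives exactly the expression obtained in the previous step, which proves the identity. Monotonicity on $(0,1)$ follows at once because the right-hand side is a nonnegative integral.

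For the equality case: if $W_\kappa(\cdot,u)$ is constant, then $x\cdot\nabla u-\kappa u\equiv 0$ on $\partial B_r$ for a.e.\ $r\in(0,1)$, i.e.\ on all of $B_1$, which is the Euler identity characterizing $\kappa$-homogeneity; the converse direction is trivial. I do not anticipate any genuine obstacle here--the proof is essentially a bookkeeping exercise that reuses the identities already assembled for Almgren's formula. The only delicate input is the vanishing of $\int_{B_r}(x\cdot\nabla u)\,\Delta u$ needed for (iii), which is handled verbatim as in the Almgren proof via the Signorini condition $(x\cdot\nabla u)\partial_{x_n}u=0$ on $B_1'$.
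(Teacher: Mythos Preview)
Your proof is correct and follows essentially the same approach as the paper: both write $W_\kappa$ in terms of $D(r)$ and $H(r)$, invoke the three identities \eqref{eq:H'}--\eqref{eq:D'} already established for Almgren's formula, and reduce the derivative to the perfect square $\int_{\partial B_r}(x\cdot\nabla u-\kappa u)^2$. The only cosmetic difference is that the paper completes the square directly inside a single chain of equalities, whereas you compute $W_\kappa'$ and the expanded right-hand side separately and then match them.
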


\begin{proof} Using the same notations \eqref{eq:IH} as in the proof
  of Theorem~\ref{thm:almgren}, we have
  $$
  W_\kappa(r,u)=\frac{1}{r^{n-2+2\kappa}}\,D(r)-
  \frac{\kappa}{r^{n-1+2\kappa}}\,H(r).
  $$
  Using the identities \eqref{eq:H'}--\eqref{eq:D'}, we
  obtain
\begin{align*}
 \frac{d}{dr} W_\kappa(r,u)&=\frac{1}{r^{n-2+2\kappa}}\left\{D'(r)-\frac{n-2+2\kappa}{r}\,
    D(r)-\frac{\kappa}{r}\,H'(r)
    +\frac{\kappa(n-1+2\kappa)}{r^2}\,H(r)\right\}\\
  &=\frac{1}{r^{n-2+2\kappa}}\left\{2\int_{\partial B_r}
    u_\nu^2-\frac{2\kappa}{r}\int_{\partial B_r} u
    u_\nu-\frac{2\kappa}{r}\int_{\partial B_r}
    uu_\nu+\frac{2\kappa^2}{r^2}\int_{\partial B_r} u^2\right\}\\
  &=\frac{2}{r^{n+2\kappa}}\int_{\partial B_r} (x\cdot\nabla u-\kappa
  u)^2.\qedhere
\end{align*}
\end{proof}
\begin{remark}
We note that in the statement of Theorem~\ref{thm:weiss} it is not necessary to assume $0\in
\Gamma_\kappa(u)$.
However, the monotonicity formula is most useful under such
assumption. The original formula by Weiss \cite{We} for the classical obstacle problem is stated only for $\kappa=2$. While such limitation is natural in the classical obstacle problem, in the lower dimensional obstacle problem we need the full range of $\kappa$'s, or at least $\kappa=2m-\frac12,\ 2m$, $m\in\N$.
\end{remark}

\subsection{Monneau type monotonicity formulas}
\label{sec:monn-type-monot}
The next family of monotonicity formulas is related to a formula
first used by Monneau \cite{Mo} in the study of singular points in the
classical obstacle problem. Here we derive a $\kappa$-homogeneous analogue of
his formula, suited for the study of singular points in the thin
obstacle problem. Recall that for $\kappa=2m$, $m\in\N$ we denote by
$\P_\kappa$ the family of harmonic homogeneous polynomial
$p_\kappa$ of degree $\kappa$, positive on $x_n=0$; i.e.
$$
\P_\kappa=\{p_\kappa(x) \mid \Delta p_\kappa=0,\ x\cdot\nabla
p_\kappa-\kappa p_\kappa=0,\ p_\kappa(x',0)\geq 0.\}
$$

\begin{theorem}[Monneau type Monotonicity
  Formula]\label{thm:Monn-mon-form} Let $u\in \S$ with
  $0\in\Sigma_{\kappa}(u)$, $\kappa=2m$, $m\in\N$. Then for arbitrary
  $p_\kappa\in \P_\kappa$
  $$
  r \mapsto M_\kappa(r,u,p_\kappa):=\frac{1}{r^{n-1+2\kappa}}\int_{\partial
    B_r}(u-p_\kappa)^2
  $$
is nondecreasing for $0<r<1$.
\end{theorem}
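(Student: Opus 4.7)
The plan is to derive the monotonicity of $M_\kappa$ by direct differentiation, expressing the derivative as a sum of two manifestly nonnegative terms, one of which is proportional to the Weiss functional $W_\kappa(r,u)$ and the other a boundary integral over the coincidence set. Setting $w:=u-p_\kappa$, a routine coarea computation on $\int_{\partial B_r}w^2$, combined with the identity $\partial_\nu w=(x\cdot\nabla w)/r$ on $\partial B_r$, gives
$$
\frac{d}{dr}M_\kappa(r,u,p_\kappa)=\frac{2}{r^{n+2\kappa}}\int_{\partial B_r}w\,(x\cdot\nabla w-\kappa w).
$$
Because $p_\kappa$ is homogeneous of degree $\kappa$, one has $x\cdot\nabla p_\kappa=\kappa p_\kappa$, so the factor $x\cdot\nabla w-\kappa w$ collapses to $x\cdot\nabla u-\kappa u$, and the integrand becomes $(u-p_\kappa)(x\cdot\nabla u-\kappa u)$. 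This reduces the analysis to manipulations involving $u$ alone, with $p_\kappa$ entering as a test function.

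Next, I split the integrand and evaluate each piece separately. For the $u$-part, using $\int_{\partial B_r}u\,u_\nu=\int_{B_r}|\nabla u|^2$ (which is a consequence of the complementary relation $u\Delta u=0$ valid for $u\in\S$ extended by even symmetry), one gets
$$
\int_{\partial B_r}u(x\cdot\nabla u-\kappa u)=r\int_{B_r}|\nabla u|^2-\kappa\int_{\partial B_r}u^2=r^{n-1+2\kappa}\,W_\kappa(r,u).
$$
For the $p_\kappa$-part, I integrate by parts, invoke $\Delta p_\kappa=0$ and $(p_\kappa)_\nu=\kappa p_\kappa/r$ on $\partial B_r$, and use the distributional identity $\Delta u=2(\partial_{x_n^+}u)\,\H^{n-1}|_{\Lambda(u)}$. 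The $\int_{\partial B_r}u\,p_\kappa$ terms cancel, leaving
$$
\int_{\partial B_r}p_\kappa(x\cdot\nabla u-\kappa u)=2r\int_{\Lambda(u)\cap B_r'}p_\kappa\,\partial_{x_n^+}u.
$$
Subtracting, I arrive at the desired identity
$$
\frac{d}{dr}M_\kappa(r,u,p_\kappa)=\frac{2}{r}\,W_\kappa(r,u)-\frac{4}{r^{n-1+2\kappa}}\int_{\Lambda(u)\cap B_r'}p_\kappa\,\partial_{x_n^+}u.
$$

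To conclude, I check that each term on the right-hand side is nonnegative. Rewriting $W_\kappa$ in frequency form gives $W_\kappa(r,u)=\frac{H(r)}{r^{n-1+2\kappa}}\bigl(N(r,u)-\kappa\bigr)$, and since $H(r)>0$ (as $u\not\equiv 0$) while Almgren's monotonicity together with $0\in\Gamma_\kappa(u)$ yields $N(r,u)\geq N(0+,u)=\kappa$, we have $W_\kappa(r,u)\geq 0$. For the boundary term, $p_\kappa\geq 0$ on $\{x_n=0\}$ by the definition of $\P_\kappa$, whereas $\partial_{x_n^+}u\leq 0$ on $B_1'$ by the Signorini complementary condition; hence $p_\kappa\,\partial_{x_n^+}u\leq 0$ on $\Lambda(u)\cap B_r'$, making the minus sign in front produce a nonnegative contribution.

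The main obstacle I anticipate is the careful bookkeeping in the integration by parts when $\Delta u$ is only a negative measure concentrated on the thin coincidence set: one must ensure that no interior boundary contributions along $\{x_n=0\}$ are lost, that the even symmetrization of $u$ is used correctly, and that the sole surviving $\Lambda(u)$-contribution has precisely the sign delivered by the complementary conditions. Once this is in order, the theorem follows cleanly from the twofold positivity furnished by Almgren (through $W_\kappa\geq 0$) and by Signorini (through the sign of $p_\kappa\,\partial_{x_n^+}u$ on $\Lambda(u)$).
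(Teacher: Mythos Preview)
Your proof is correct and follows essentially the same route as the paper's: both compute $\frac{d}{dr}M_\kappa=\frac{2}{r^{n+2\kappa}}\int_{\partial B_r}w(x\cdot\nabla w-\kappa w)$ and then rewrite this as $\frac{2}{r}W_\kappa(r,u)$ plus a sign-definite term coming from $-p_\kappa\Delta u\geq 0$ on $\Lambda(u)$. The only cosmetic difference is that the paper packages the extra term as $\frac{2}{r^{n-1+2\kappa}}\int_{B_r}w\Delta w$ (obtained by expanding $W_\kappa(r,u)-W_\kappa(r,p_\kappa)$ in $w$), whereas you make it explicit as $-\frac{4}{r^{n-1+2\kappa}}\int_{\Lambda(u)\cap B_r'}p_\kappa\,\partial_{x_n^+}u$; these are the same quantity and the sign analysis is identical.
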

\begin{proof} We note that
\[
N(r,u)\geq \kappa ,\quad W_\kappa(r,u)\geq 0,\quad \text{and}\quad W_\kappa(r,p_\kappa)=0.
  \]
The first inequality follows from
  Almgren's monotonicity formula
  $$
  N(r,u)\geq N(0+,u)=\kappa.
  $$
  The second inequality follows from the identity
  $$
  W_\kappa(r,u)=\frac{H(r)}{r^{n-1+2\kappa}}(N(r,u)-\kappa)\geq 0.
  $$
  Finally, the third equality follows from the identity
  $N(r,p_\kappa)=\kappa$.

We then follow the proof of \cite{Mo}*{Theorem 1.8}. Let $w=u-p_\kappa$ and write
\begin{align*}
  W_\kappa(r,u)&=W_\kappa(r,u)-W_\kappa(r,p_\kappa)\\
  &=\frac{1}{r^{n-2+2\kappa}}\int_{B_r} (|\nabla w|^2+2 \nabla
  w\cdot
  \nabla p_\kappa)-\frac{\kappa}{r^{n-1+2\kappa}}\int_{\partial B_r}
  (w^2+2wp_\kappa)\\
  &=\frac{1}{r^{n-2+2\kappa}}\int_{B_r} |\nabla
  w|^2-\frac{\kappa}{r^{n-1+2\kappa}}\int_{\partial B_r}
  w^2+\frac{2}{r^{n-1+2\kappa}}\int_{\partial B_r}w(x\cdot\nabla
  p_\kappa-\kappa p_\kappa)\\
  &=\frac{1}{r^{n-2+2\kappa}}\int_{B_r} |\nabla
  w|^2-\frac{\kappa}{r^{n-1+2\kappa}}\int_{\partial B_r}
  w^2\\
  &= \frac{1}{r^{n-2+2\kappa}}\int_{B_r}{(-w\Delta
    w)}+\frac{1}{r^{n-1+2\kappa}}\int_{\partial B_r} w(x\cdot\nabla
  w-\kappa w).
\end{align*}
On the other hand we have
\begin{align*}
  \frac{d}{dr}\left(\frac{1}{r^{n-1+2\kappa}}\int_{\partial B_r}
    w^2(x) \right)=&\frac{d}{dr}\int_{\partial B_1}
  \frac{w^2(ry)}{r^{2\kappa}}\\
  &=\int_{\partial B_1}\frac{2 w(ry)(ry\cdot\nabla
    w(ry)-\kappa w(ry))}{r^{2\kappa+1}}\\
  &=\frac{2}{r^{n+2\kappa}}\int_{\partial B_r} w(x\cdot \nabla w-\kappa w)
\end{align*}
and
$$
w\Delta w=(u-p_\kappa)(\Delta u-\Delta p_\kappa)=-p_\kappa\Delta
u\geq 0
$$
as $\Delta u=0$ off $\{x_n=0\}$ and $\Delta u\leq 0$ and
$p_\kappa\geq 0$ on $\{x_n=0\}$. Combining, we obtain
$$
\frac{d}{dr} M_\kappa(r,u,p_\kappa)\geq \frac{2}{r}
W_\kappa(r,u)\geq 0.
$$
\end{proof}

\section{Singular set: proofs}
\label{sec:singular-points}

We now apply the monotonicity formulas in the previous sections to
study the singular points; in particular, we give the proofs of
Theorems~\ref{thm:k-diff-sing-p} and \ref{thm:sing-points}. We start
with establishing the correct growth rate at such points.

\begin{lemma}[Growth estimate]\label{lem:gr-est-above} Let $u\in\S$ and
  $0\in\Gamma_\kappa(u)$. There exists $C>0$ such that
  $$
  |u(x)|\leq C |x|^\kappa\quad\text{in }B_1.
  $$
\end{lemma}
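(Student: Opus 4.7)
The plan is to combine Almgren's frequency bound $N(r,u)\ge \kappa$ with an elementary ODE for $H(r)=\int_{\partial B_r}u^2$ to derive an $L^2$ growth estimate on spheres, and then promote it to a pointwise estimate via the uniform regularity of the rescalings $u_r$ defined in \eqref{eq:rescaling}.

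Concretely, I would first observe that by Theorem~\ref{thm:almgren} and the hypothesis $0\in\Gamma_\kappa(u)$,
\[
N(r,u)\;\ge\;N(0+,u)=\kappa \qquad \text{for all } 0<r<1.
\]
Combining \eqref{eq:H'} with \eqref{eq:D-int-parts} gives the well-known identity
\[
\frac{d}{dr}\log \frac{H(r)}{r^{n-1}}
=\frac{H'(r)}{H(r)}-\frac{n-1}{r}
=\frac{2D(r)}{H(r)}
=\frac{2N(r,u)}{r}\;\ge\;\frac{2\kappa}{r}.
\]
Integrating this differential inequality from $r$ to $1$ yields
\[
H(r)\;\le\; H(1)\,r^{\,n-1+2\kappa},\qquad 0<r<1,
\]
i.e., an $L^2$-on-spheres control with the sharp exponent $\kappa$ dictated by the frequency.

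Next, I would pass from this $L^2$ bound to the claimed pointwise bound using the rescalings. The denominator in \eqref{eq:rescaling} satisfies
\[
\left(\tfrac{1}{r^{n-1}}\int_{\partial B_r}u^2\right)^{1/2}\;\le\; C\,r^{\kappa},
\]
by the estimate just established. On the other hand, $u_r\in\S$ with $\|u_r\|_{L^2(\partial B_1)}=1$ and, as recalled in the discussion of \eqref{eq:urj-u0-conv}, the $C^{1,\alpha}$ estimates of \cite{AC} give a universal bound
\[
\sup_{B_{1/2}^{\pm}\cup B_{1/2}'}|u_r|\;\le\; C
\]
in terms of $\|u_r\|_{W^{1,2}(B_1)}$, which in turn is controlled by $\|u_r\|_{L^2(\partial B_1)}=1$ through Almgren's formula. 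Unwinding the definition of $u_r$ gives $|u(y)|\le C\,r^{\kappa}$ for $|y|\le r/2$; taking $r=2|x|$ produces the sought estimate $|u(x)|\le C|x|^{\kappa}$ in $B_{1/2}$, and the remaining annulus $1/2<|x|<1$ is handled directly by the $L^\infty$ bound of $u$ on compact subsets.

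The only delicate point is the passage from the integrated bound on $H(r)$ to a pointwise estimate; this is where the universal $C^{1,\alpha}$ bounds for $\S$ (uniform in the $W^{1,2}$-normalization of the rescalings) are essential. Everything else is a short manipulation of the identities already proved in the derivation of Theorem~\ref{thm:almgren}, and uses no new ingredients beyond the monotonicity of the frequency.
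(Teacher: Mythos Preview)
Your proof is correct and follows essentially the same approach as the paper: both integrate the differential inequality $H'(r)/H(r)\ge (n-1+2\kappa)/r$ (from \eqref{eq:H'}--\eqref{eq:D-int-parts} and the frequency lower bound $N(r,u)\ge\kappa$) to obtain $H(r)\le H(1)\,r^{n-1+2\kappa}$. The only difference is in the final passage from the $L^2$-on-spheres bound to the pointwise bound: the paper invokes the subharmonicity of $u^+$ and $u^-$ (so the sup on $B_r$ is controlled by the $L^2$ average on $\partial B_{2r}$ directly via the mean-value inequality), whereas you route through the uniform $C^{1,\alpha}$ estimates for the rescalings $u_r$; both arguments are valid, with the paper's being slightly more elementary.
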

\begin{proof} This is already proved in Caffarelli, Salsa, Silvestre
  \cite{CSS}*{Lemma 6.6}  but for
  the reader's convenience we provide a proof. From \eqref{eq:H'}, \eqref{eq:D-int-parts} we have
  $$
  \frac{H'(r)}{H(r)}=\frac{n-1+2N(r)}{r}\geq\frac{n-1+2\kappa}{r}.
  $$
  Hence
  $$
  \log\frac{H(1)}{H(r)}\geq (n-1+2\kappa)\log\frac1{r}
  $$
  which implies
  $$
  H(r)\leq H(1) r^{n-1+2\kappa}.
  $$
  Finally, the $L^\infty$ bound follows from the fact that $u^+$
  and $u^-$ are actually subharmonic, see e.g.\ \cite{AC}*{Lemma 1}.
\end{proof}

\begin{lemma}[Nondegeneracy at singular points]\label{lem:nondeg-sing} Let $u\in\S$ and
  $0\in\Sigma_\kappa (u)$. There exists $c>0$, possibly depending on $u$, such that
  $$
  \sup_{\partial B_r} |u(x)|\geq c\,r^\kappa,\quad\text{for } 0<r<1.
  $$
\end{lemma}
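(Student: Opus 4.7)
The plan is to reduce the lemma to proving $h_0:=\lim_{r\to 0+} h(r)>0$, where $h(r):=H(r)/r^{n-1+2\kappa}$ and $H(r)=\int_{\partial B_r}u^2$. Indeed, the trivial bound $H(r)\leq |S^{n-1}|\,r^{n-1}\sup_{\partial B_r}|u|^2$ converts $h(r)\geq h_0>0$ into $\sup_{\partial B_r}|u|\geq (h_0/|S^{n-1}|)^{1/2}\,r^\kappa$. To see that $h_0$ exists, I would apply Monneau's formula (Theorem~\ref{thm:Monn-mon-form}) to the trivial polynomial $p_\kappa\equiv 0\in\P_\kappa$; then $M_\kappa(r,u,0)=h(r)$ is nondecreasing on $(0,1)$. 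Equivalently, a direct computation gives $h'(r)=2W_\kappa(r,u)/r$, with $W_\kappa(r,u)=h(r)(N(r,u)-\kappa)\geq 0$ by Theorem~\ref{thm:almgren}.

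I would then argue by contradiction, supposing $h_0=0$. By Theorem~\ref{thm:blowup-sing}, since $0\in\Sigma_\kappa(u)$, one can extract a subsequence $r_j\to 0+$ for which the $L^2$-normalized rescalings $u_{r_j}$ converge strongly in $L^2(\partial B_1)$ to some nonzero $p^*_\kappa\in\P_\kappa$ with $\|p^*_\kappa\|_{L^2(\partial B_1)}=1$. Writing $\tilde u_r(y):=u(ry)/r^\kappa$ and $\alpha_j:=\sqrt{h(r_j)}$, one has the key identity $\tilde u_{r_j}=\alpha_j\,u_{r_j}$, and $\alpha_j\to 0$ by the contradiction hypothesis.

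The decisive step is to apply Monneau's formula not to a single polynomial but to the entire family $\{cp^*_\kappa\}_{c>0}\subset \P_\kappa$. Along $r_j$,
\[
M_\kappa(r_j,u,cp^*_\kappa)=\int_{\partial B_1}(\alpha_j u_{r_j}-cp^*_\kappa)^2\,d\sigma\;\longrightarrow\;c^2,
\]
so the monotone limit gives $M_\kappa(0+,u,cp^*_\kappa)=c^2$. Expanding
\[
M_\kappa(r,u,cp^*_\kappa)=h(r)+c^2-2c\!\int_{\partial B_1}\tilde u_r\,p^*_\kappa\,d\sigma\geq c^2
\]
and cancelling $c^2$, one obtains $\int_{\partial B_1}\tilde u_r\,p^*_\kappa\,d\sigma\leq h(r)/(2c)$ for every $r\in(0,1)$ and every $c>0$; letting $c\to\infty$ forces $\int_{\partial B_1}\tilde u_r\,p^*_\kappa\,d\sigma\leq 0$ for all $r\in(0,1)$.

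Evaluating this inequality at $r=r_j$ and dividing by $\alpha_j>0$ yields $\int_{\partial B_1}u_{r_j}\,p^*_\kappa\,d\sigma\leq 0$, which contradicts the strong $L^2$ convergence $u_{r_j}\to p^*_\kappa$: the latter forces $\int_{\partial B_1} u_{r_j}\,p^*_\kappa\,d\sigma\to \|p^*_\kappa\|^2=1>0$. The principal obstacle is recognizing the need to vary the scalar $c$: a single application of Monneau does not yield a sharp enough estimate, whereas letting $c\to\infty$ in the one-parameter family converts the monotonicity inequality into a decisive sign constraint on the $L^2$-pairing of the homogeneous rescaling with the blowup.
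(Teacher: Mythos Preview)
Your argument is correct and follows the same strategy as the paper: assume the $L^2$ average $h(r)=H(r)/r^{n-1+2\kappa}$ vanishes along a sequence, extract a nonzero polynomial blowup $q_\kappa\in\P_\kappa$ of the normalized rescalings, and derive a contradiction from Monneau's monotonicity formula applied with that blowup. Your preliminary observation that $h(r)=M_\kappa(r,u,0)$ is itself monotone (so the contradiction hypothesis becomes simply $h_0=0$) is a clean touch the paper does not make explicit.

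The one point worth flagging is your claim that varying the scalar $c$ is the ``decisive step'' and that a single application of Monneau is not sharp enough. In fact the paper uses exactly one application, with $p_\kappa=q_\kappa$ (your $c=1$). From $M_\kappa(r,u,q_\kappa)\ge M_\kappa(0+,u,q_\kappa)=\|q_\kappa\|_{L^2(\partial B_1)}^2$ one gets, after expanding and rescaling, $\int_{\partial B_1}\bigl(\tfrac{h_r}{r^\kappa}u_r^2-2u_rq_\kappa\bigr)\ge 0$; passing to the limit along $r_j$ yields $-2\int_{\partial B_1}q_\kappa^2\ge 0$, a contradiction. Equivalently, in your notation, $c=1$ already gives $\int_{\partial B_1}u_{r_j}p^*_\kappa\le \alpha_j/2\to 0$, which clashes with the limit $1$. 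So the one-parameter family $\{cp^*_\kappa\}$ and the passage $c\to\infty$ are correct but superfluous; they do not buy additional leverage over the paper's direct argument.
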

\begin{proof} Assume the contrary. Then for a sequence $r=r_j\to 0$
  one has
  $$
  h_r:=\left(\frac{1}{r^{n-1}}\int_{\partial B_r}
    u^2\right)^{1/2}=o(r^{\kappa}).
  $$
  Passing to a subsequence if necessary we may assume that
  $$
  u_r(x)=\frac{u(rx)}{h_r}\to q_\kappa(x)\quad\text{uniformly on
  }\partial B_1
  $$
  for some $q_\kappa\in\P_\kappa$, see
  Theorem~\ref{thm:blowup-sing}. Note that $q_\kappa$ is nonzero
  as it must satisfy $\int_{\partial B_1}q_\kappa^2=1$. Now consider
  the functional $M_\kappa(r,u,q_\kappa)$ with $q_\kappa$ as
  above. From the assumption on the growth of $u$ it is easy to
  realize that
  $$
  M_\kappa(0+,u, q_\kappa)=\int_{\partial B_1}
  q_\kappa^2=\frac{1}{r^{n-1+2\kappa}} \int_{\partial B_r} q_\kappa^2
  $$
  Hence, we have that
  $$
  \frac{1}{r^{n-1+2\kappa}} \int_{\partial B_r} (u-q_\kappa)^2\geq
  \frac{1}{r^{n-1+2\kappa}} \int_{\partial B_r} q_\kappa^2
  $$
  or equivalently
  $$
  \int_{\partial B_r} u^2-2 u q_\kappa\geq 0.
  $$
  On the other hand, rescaling, we obtain
  $$
  \int_{\partial B_1} h_r^2 u_r^2-2h_r r^{\kappa} u_r q_\kappa\geq
  0.
  $$
  Factoring out $h_r r^\kappa$, we have
  $$
  \int_{\partial B_1} \frac{h_r}{r^\kappa} u_r^2-2u_r q_\kappa\geq
  0,
  $$
  and passing to the limit over $r=r_j\to 0$
  $$
  -\int_{\partial B_1} q_\kappa^2\geq 0.
  $$
 Since $q_\kappa\not=0$, we have thus reached a contradiction.
\end{proof}

\begin{lemma}[$\Sigma_k(u)$ is $F_\sigma$]\label{lem:sing-set-F-sigma} For any $u\in \S$, the set $\Sigma_\kappa(u)$ is of type $F_\sigma$, i.e., it is a union of countably many closed sets.
\end{lemma}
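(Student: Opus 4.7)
The plan is to exhibit $\Sigma_\kappa(u)$ as a countable union of closed sets, using the two-sided $\kappa$-growth characterization of singular points that is implicit in Lemmas~\ref{lem:gr-est-above} and~\ref{lem:nondeg-sing}. If $\kappa\notin\{2m : m\in\N\}$ then $\Sigma_\kappa(u)=\varnothing$ by Theorem~\ref{thm:blowup-sing}, so the claim is trivial. Henceforth I assume $\kappa=2m$.

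For each integer $j\geq 1$ with $1/j<1$ I define
\[
E_j:=\Bigl\{x_0\in\Gamma(u)\cap\overline{B'_{1-1/j}} \;\Bigm|\; \tfrac{1}{j}\rho^\kappa\leq \sup_{\partial B_\rho(x_0)}|u|\leq j\rho^\kappa \text{ for every } \rho\in(0,1/j)\Bigr\}.
\]
Continuity of $u$ in $B_1$ makes $x_0\mapsto\sup_{\partial B_\rho(x_0)}|u|$ continuous for each fixed $\rho$, so each single-$\rho$ inequality cuts out a closed subset of the compact set $\Gamma(u)\cap\overline{B'_{1-1/j}}$, and $E_j$ is closed as the intersection of such sets. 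The inclusion $\Sigma_\kappa(u)\subset\bigcup_j E_j$ is immediate from Lemmas~\ref{lem:gr-est-above} and~\ref{lem:nondeg-sing}: at every $x_0\in\Sigma_\kappa(u)$ both bounds hold on some interval $(0,\rho_0)$ with constants $C,c>0$, and one picks $j$ large enough that $j>\max(C,1/c)$, $1/j<\rho_0$, and $x_0\in\overline{B'_{1-1/j}}$.

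The reverse inclusion $\bigcup_j E_j\subset\Sigma_\kappa(u)$ is the heart of the matter. For $x_0\in E_j$ it suffices to prove $N^{x_0}(0+,u)=\kappa$, since by Theorem~\ref{thm:blowup-sing} this places $x_0$ in $\Gamma_\kappa(u)=\Sigma_\kappa(u)$. I consider the rescalings $u_\rho(x):=u(x_0+\rho x)/h_\rho$ with $h_\rho:=(H^{x_0}(\rho)/\rho^{n-1})^{1/2}$, which solve the Signorini problem in $B_1$ and satisfy $\|u_\rho\|_{L^2(\partial B_1)}=1$. Almgren's monotonicity gives the uniform bound $\|\nabla u_\rho\|_{L^2(B_1)}^2=N^{x_0}(\rho,u)\leq N^{x_0}(1,u)$ and hence, via the $C^{1,\alpha}$ estimates of \cite{AC}, a uniform $C^{1,\alpha}$ bound on $u_\rho$ on compact subsets of $\overline{B_{1/2}^{+}\cup B_{1/2}'}$. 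Combining this uniform bound with the two ingredients of the definition of $E_j$ yields $h_\rho\leq C_1\rho^\kappa$ from the upper bound (since $u_\rho$ must satisfy the $L^\infty$ bound coming from the pointwise upper growth of $u$) and $h_\rho\geq c_1\rho^\kappa$ from the lower bound (since the $L^\infty$ lower bound on $u_\rho$ forced by nondegeneracy cannot exceed the uniform $C^{1,\alpha}$ bound). Thus $H^{x_0}(\rho)\asymp \rho^{n-1+2\kappa}$. Inserting this two-sided asymptotic into the identity $\tfrac{d}{d\rho}\log H^{x_0}(\rho)=\tfrac{n-1+2N^{x_0}(\rho)}{\rho}$ recorded in the proof of Theorem~\ref{thm:almgren}, and integrating on $[\rho,1/j]$, I find that the weighted average $\tfrac{1}{\log(1/\rho)}\int_\rho^{1/j}\tfrac{N^{x_0}(s)}{s}\,ds$ tends to $\kappa$ as $\rho\to 0$; on the other hand, monotonicity of $N^{x_0}(\cdot,u)$ and an elementary Tauberian argument force this same average to tend to $N^{x_0}(0+,u)$, so $N^{x_0}(0+,u)=\kappa$.

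The hard part will be this converse inclusion, which hinges on upgrading the pointwise two-sided supremum bounds on $u$ to the sharp asymptotic $H^{x_0}(\rho)\asymp\rho^{n-1+2\kappa}$ via rescaling and uniform regularity, and then extracting $N^{x_0}(0+,u)=\kappa$ from the logarithmic frequency identity. Once the characterization $\Sigma_\kappa(u)=\bigcup_j E_j$ is established, the $F_\sigma$ structure follows at once from the closedness of each $E_j$.
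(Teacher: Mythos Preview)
Your proof is correct and uses the same decomposition into sets $E_j$ characterized by two-sided $\kappa$-growth that the paper uses, but you organize the argument differently and end up working harder than necessary at one step. The paper defines $E_j$ as the set of points $x_0\in\Sigma_\kappa(u)\cap\overline{B_{1-1/j}}$ satisfying the two-sided bound, so that $\bigcup_j E_j\subset\Sigma_\kappa(u)$ is automatic; the work is then to show each $E_j$ is closed, i.e., that a limit point $x_0$ of $E_j$ lies in $\Sigma_\kappa(u)$. For this the paper gets $N^{x_0}(0+,u)\geq\kappa$ for free from upper semicontinuity of $x\mapsto N^x(0+,u)$ (since $x_0$ is a limit of points of frequency $\kappa$), and $N^{x_0}(0+,u)\leq\kappa$ from Lemma~\ref{lem:gr-est-above} (a strictly larger frequency would force $|u(x)|\leq C|x-x_0|^{\kappa'}$ with $\kappa'>\kappa$, contradicting the lower growth bound). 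By contrast, you define $E_j\subset\Gamma(u)$, so closedness is immediate but you must show directly that every $x_0\in E_j$ has frequency $\kappa$; lacking nearby points of known frequency, upper semicontinuity is unavailable, and you are forced into the rescaling argument for $H^{x_0}(\rho)\asymp\rho^{n-1+2\kappa}$ followed by the Tauberian extraction of $N^{x_0}(0+,u)$ from the logarithmic identity. That route is valid (with the minor caveat that the uniform $C^{1,\alpha}$ bound on $u_\rho$ is interior, so one should read the sup bound on $\partial B_{1/2}$ rather than $\partial B_1$), but the paper's organization sidesteps it entirely.
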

\begin{proof} Let $E_{j}$ be the set of points  $x_0\in\Sigma_\kappa(u)\cap \overline{B_{1-1/j}}$ such that
\begin{equation}\label{eq:Ej}
\tfrac1j\, \rho^{\kappa}\leq \sup_{|x-x_0|=\rho} |u(x)| < j \rho^\kappa
\end{equation}
for $0<\rho<1-|x_0|$. Note that from Lemmas~\ref{lem:gr-est-above} and \ref{lem:nondeg-sing} we have that 
$$
\Sigma_\kappa(u)=\bigcup_{j=1}^{\infty} E_j.
$$
The lemma will follow once we show that $E_j$ is a closed set. Indeed, if $x_0\in \overline E_j$ then $x_0$ satisfies \eqref{eq:Ej} and we need to show only that $x_0\in \Sigma_\kappa(u)$, or equivalently that $N^{ x_0}(0+,u)=\kappa$ by Theorem~\ref{thm:blowup-sing}. Since the function $x\mapsto N^{x}(0+,u)$ is upper semicontinous, we readily have that  $N^{x_0}(0+,u)\geq \kappa$. On the other hand, if $N^{ x_0}(0+,u)=\kappa'>\kappa$, we would have 
$$
|u(x)|< C|x-x_0|^{\kappa'}\quad\text{in }B_{1-|x_0|}(x_0),
$$
which would contradict the estimate from below in \eqref{eq:Ej}. Thus $N^{x_0}(0+,u)=\kappa$ and therefore $x_0\in E_j$.
\end{proof}

\begin{theorem}[Uniqueness of the homogeneous blowup at singular
  points]\label{thm:uniq-blowup-sing} Let $u\in\S$ and
  $0\in\Sigma_\kappa(u)$. Then there exists a unique nonzero
  $p_\kappa\in\P_\kappa$ such that
  $$
  u_r^{(\kappa)}(x):=\frac{u(rx)}{r^\kappa}\to p_\kappa(x).
  $$
\end{theorem}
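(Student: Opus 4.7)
The plan is to combine the growth and nondegeneracy estimates with the Monneau-type monotonicity formula of Theorem~\ref{thm:Monn-mon-form}, following the scheme that Monneau \cite{Mo} introduced for the classical obstacle problem. The idea is to first extract a subsequential limit by compactness, then plug that limit into $M_\kappa$ as the reference polynomial to bootstrap convergence along the whole family $r \to 0+$.

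First I would extract a subsequential blowup. By Lemma~\ref{lem:gr-est-above}, $|u_r^{(\kappa)}| \leq C$ uniformly on $B_1$, and the standard $C^{1,\alpha}_{\loc}$ estimates on normalized rescalings together with Lemma~\ref{lem:nondeg-sing} yield the two-sided bound $c\, r^\kappa \leq h_r \leq C\, r^\kappa$ where $h_r = (r^{1-n}\int_{\partial B_r} u^2)^{1/2}$; this shows that $\{u_r^{(\kappa)}\}$ is bounded in $C^{1,\alpha}_{\loc}(B_1^\pm \cup B_1')$ uniformly in $r$. Along some subsequence $r_j \to 0+$ one has $u_{r_j}^{(\kappa)} \to p_\kappa$ in $C^1_{\loc}(B_1^\pm \cup B_1')$. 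Writing $u_r^{(\kappa)} = (h_r/r^\kappa)\, u_r$ and passing to a further subsequence so that $h_{r_j}/r_j^\kappa \to a > 0$, Theorem~\ref{thm:blowup-sing} identifies $p_\kappa = a\,\tilde p_\kappa$ with $\tilde p_\kappa \in \P_\kappa$; hence $p_\kappa \in \P_\kappa$ and is nonzero.

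Next I would apply the Monneau formula (Theorem~\ref{thm:Monn-mon-form}) with this specific $p_\kappa$. A change of variables gives
$$
M_\kappa(r, u, p_\kappa) = \int_{\partial B_1} (u_r^{(\kappa)} - p_\kappa)^2,
$$
and along $r_j$ this tends to $0$ by the $C^1_{\loc}$ convergence from the previous step. Since $r \mapsto M_\kappa(r,u,p_\kappa)$ is nondecreasing and nonnegative, monotonicity forces $M_\kappa(0+, u, p_\kappa) = 0$, i.e.\ $u_r^{(\kappa)} \to p_\kappa$ in $L^2(\partial B_1)$ as $r \to 0+$ along the entire family. Uniqueness then follows: if $u_{s_k}^{(\kappa)}$ converges in $C^1_\loc$ along some other sequence $s_k \to 0+$, the limit must coincide with $p_\kappa$ on $\partial B_1$ by the $L^2$ convergence just established, and hence identically on $\R^n$ by homogeneity.

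The main obstacle is the familiar one in Monneau-type arguments: one must know in advance which $p_\kappa \in \P_\kappa$ to test the formula against. The freedom built into Theorem~\ref{thm:Monn-mon-form} to use \emph{any} element of $\P_\kappa$ is precisely what allows us to feed a compactness-extracted blowup back into the formula and propagate its uniqueness. The supporting technical input is the comparability $h_r \asymp r^\kappa$, which links the two natural rescalings $u_r$ and $u_r^{(\kappa)}$ by a factor bounded above and below.
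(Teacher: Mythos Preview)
Your argument is correct and follows the same Monneau-type scheme as the paper: extract a subsequential homogeneous blowup, feed it into $M_\kappa$ as the reference polynomial, and use monotonicity to upgrade to convergence along the full family $r\to 0+$. The only differences are cosmetic: the paper identifies the subsequential limit as an element of $\P_\kappa$ via the Weiss formula (showing $W_\kappa(r,u_0)\equiv 0$ forces $\kappa$-homogeneity directly) rather than through the factor $h_r/r^\kappa$ linking $u_r^{(\kappa)}$ to the normalized rescaling $u_r$; and the paper states the subsequential convergence in $C^{1,\alpha}_{\loc}(\R^n)$ rather than in $B_1^\pm\cup B_1'$ --- you need convergence up to $\partial B_1$ to conclude $\int_{\partial B_1}(u_{r_j}^{(\kappa)}-p_\kappa)^2\to 0$, but since the growth estimate $|u|\le C|x|^\kappa$ holds in all of $B_1$, the rescalings $u_r^{(\kappa)}$ are uniformly bounded on $B_{1/r}$ and hence precompact in $C^1_{\loc}(\R^n)$, so this is a harmless imprecision.
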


\begin{proof} Let $u^{(\kappa)}_r(x)\to u_0(x)$ in
  $C^{1,\alpha}_{\loc}(\R^n)$ over a certain subsequence $r=r_j\to
  0+$.  The
  existence of such limit follows from the
  growth estimate $|u(x)|\leq C|x|^\kappa$.  We call such $u_0$ a \emph{homogeneous
  blowup}, in contrast to the blowups that were based on the scaling
  \eqref{eq:rescaling}.  Note that Lemma~\ref{lem:nondeg-sing} implies that
  $u_0$ in not identically zero. Next, we have for any $r>0$
  $$
  W_\kappa(r, u_0)=\lim_{r_j\to 0+} W_\kappa(r,
  u_{r_j}^{(\kappa)})=\lim_{r_j\to 0+} W_\kappa(rr_j,
  u)=W_\kappa(0+,u)=0.
  $$
  In view of Theorem~\ref{thm:weiss} this implies that the harmonic function $u_0$ is homogeneous of degree
  $\kappa$. Repeating the arguments in
  Theorem~\ref{thm:blowup-sing}, we see that $u_0$ must be a
  polynomial in $\P_\kappa$. (The same could be achieved by looking at $N(r,
  u_0)$).

 We now apply Monneau's monotonicity formula to the pair $u$,
  $u_0$. By Theorem~\ref{thm:Monn-mon-form} the limit
  $M_\kappa(0+,u,u_0)$ exists and can be computed by
  $$
  M_\kappa(0+, u, u_0)=\lim_{r_j\to 0+} M_\kappa(r_j, u,
  u_0)=\lim_{j\to\infty} \int_{\partial B_1} (u^{(\kappa)}_{r_j}-u_0)^2=0.
  $$
  In particular, we obtain that
  $$
  \int_{\partial B_1} (u^{(\kappa)}_r(x)-u_0)^2= M_\kappa(r,
  u,u_0)\to 0
  $$
  as $r\to 0+$ (not just over $r=r_j\to 0+$!). Thus, if $u_0'$ is a
  limit of $u_r^{(\kappa)}$ over another sequence $r=r_j'\to
  0$, we obtain that
  $$
  \int_{\partial B_1} (u_0'-u_0)^2=0.
  $$
  Since both $u_0$ and $u_0'$ are homogeneous of degree $\kappa$,
  they must coincide in $\R^n$.
\end{proof}

We note explicitly that the conclusion of Theorem
\ref{thm:uniq-blowup-sing} is equivalent to the Taylor expansion
$$
u(x)=p_\kappa(x)+o(|x|^\kappa)
$$
and therefore it proves the first part of
Theorem~\ref{thm:k-diff-sing-p}. The next result is essentially the
second part of Theorem~\ref{thm:k-diff-sing-p}.

\begin{theorem}[Continuous dependence of the
  blowups]\label{thm:cont-dep-blowup} Let $u\in\S$. For
  $x_0\in\Sigma_\kappa(u)$ denote by $p^{x_0}_\kappa$ the blowup of
  $u$ at $x_0$ as in Theorem~\ref{thm:uniq-blowup-sing}, so that
  $$
  u(x)=p^{x_0}_\kappa(x-x_0)+o(|x-x_0|^\kappa).
  $$
  Then the mapping $x_0\mapsto p_\kappa^{x_0}$ from
  $\Sigma_\kappa(u)$ to $\P_\kappa$ is continuous. Moreover, for any compact $K\subset \Sigma_\kappa(u)\cap B_{1}$ there
  exists a modulus of continuity $\sigma_K$, $\sigma_K(0+)=0$ such that
  $$
  |u(x)-p^{x_0}_\kappa(x-x_0)|\leq \sigma_K (|x-x_0|)|x-x_0|^\kappa
  $$
  for any $x_0\in K$.
\end{theorem}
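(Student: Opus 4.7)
The plan is to derive both statements from Monneau's monotonicity formula (Theorem~\ref{thm:Monn-mon-form}), applied at arbitrary singular points with a variable target polynomial. For $x_0 \in \Sigma_\kappa(u)$ define
$$
M_\kappa^{x_0}(r,u,p_\kappa) := \frac{1}{r^{n-1+2\kappa}}\int_{\partial B_r(x_0)}\bigl(u(x)-p_\kappa(x-x_0)\bigr)^2\,dx,
$$
which is nondecreasing in $r$ by Theorem~\ref{thm:Monn-mon-form} applied to the translate $u(\cdot+x_0)\in\S$. The basic identity, obtained from the Taylor expansion of Theorem~\ref{thm:uniq-blowup-sing} together with the $\kappa$-homogeneity of polynomials in $\P_\kappa$, is
$$
M_\kappa^{x_0}(0+,u,p_\kappa)=\int_{\partial B_1}(p_\kappa^{x_0}-p_\kappa)^2\qquad\text{for every }p_\kappa\in\P_\kappa.
$$

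For continuity, fix $x_0\in\Sigma_\kappa(u)$ and let $y_j\to x_0$ in $\Sigma_\kappa(u)$. Applying the identity at $y_j$ with the fixed polynomial $p_\kappa^{x_0}$ and using monotonicity gives
$$
\int_{\partial B_1}(p_\kappa^{y_j}-p_\kappa^{x_0})^2 = M_\kappa^{y_j}(0+,u,p_\kappa^{x_0}) \le M_\kappa^{y_j}(r,u,p_\kappa^{x_0})
$$
for every small $r>0$. For fixed $r$ the map $x\mapsto M_\kappa^{x}(r,u,p_\kappa^{x_0})$ is continuous in $x$, so $M_\kappa^{y_j}(r,u,p_\kappa^{x_0})\to M_\kappa^{x_0}(r,u,p_\kappa^{x_0})$; the identity with $p_\kappa=p_\kappa^{x_0}$ forces this limit to $0$ as $r\to 0+$. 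Choosing $r$ first and $j$ second yields $\int_{\partial B_1}(p_\kappa^{y_j}-p_\kappa^{x_0})^2\to 0$, and since $\P_\kappa$ is finite-dimensional all norms on it are equivalent, so $p_\kappa^{y_j}\to p_\kappa^{x_0}$ in any norm.

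For the uniform modulus on a compact set $K\subset\Sigma_\kappa(u)\cap B_1$ I argue by contradiction and compactness. First, a uniform growth bound: taking the valid choice $p_\kappa\equiv 0\in\P_\kappa$ in Monneau's formula makes $r\mapsto r^{-(n-1+2\kappa)}\int_{\partial B_r(x_0)}u^2$ nondecreasing, and continuity in $x_0$ at a fixed macroscopic radius $r_0=r_0(K)$ plus compactness of $K$ yields a uniform $L^2$ spherical bound. Subharmonicity of $u^\pm$ then upgrades this to $|u(x)|\le C_K|x-x_0|^\kappa$ uniformly in $x_0\in K$. Now suppose the desired modulus fails: there exist $\delta>0$, $x_j\in K$, and $z_j\to 0$ with $|u(x_j+z_j)-p_\kappa^{x_j}(z_j)|>\delta|z_j|^\kappa$. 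Set $r_j:=|z_j|$, $\zeta_j:=z_j/r_j$, $v_j(y):=u(x_j+r_jy)/r_j^\kappa$; the uniform growth bound and the $C^{1,\alpha}$ estimates of \cite{AC} make $\{v_j\}$ relatively compact in $C^1_\loc(B_R^\pm\cup B_R')$ for every $R$. Passing to subsequences so that $x_j\to x_0\in K$, $\zeta_j\to\zeta_0\in\partial B_1$, and $v_j\to v_0$ in $C^1_\loc$, the same Monneau argument (at $x_j$ with target $p_\kappa^{x_0}$) gives $\int_{\partial B_1}(v_j-p_\kappa^{x_0})^2\to 0$, hence $v_0=p_\kappa^{x_0}$ on $\partial B_1$. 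Uniform convergence on $\partial B_1$ together with the just-established continuity of $x\mapsto p_\kappa^x$ forces $v_j(\zeta_j)-p_\kappa^{x_j}(\zeta_j)\to 0$, contradicting $|v_j(\zeta_j)-p_\kappa^{x_j}(\zeta_j)|>\delta$.

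The main obstacle, and the reason Monneau's formula is indispensable here, is that one must control $p_\kappa^{y_j}-p_\kappa^{x_0}$ without direct information about $u$ on shrinking balls around $y_j$: the monotonicity transports smallness of the error from a fixed macroscopic scale around $x_0$ (supplied by the Taylor expansion at $x_0$) to arbitrarily small scales around any nearby singular point $y_j$. A secondary obstacle is upgrading Lemma~\ref{lem:gr-est-above}, whose constant a priori depends on the base point, to a uniform growth estimate over $K$; the $p_\kappa\equiv 0$ case of Monneau's formula handles this cleanly.
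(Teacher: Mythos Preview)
Your proof is correct and shares the paper's core strategy: use Monneau monotonicity to propagate smallness of $M_\kappa^{x_0}(r,u,p_\kappa^{x_0})$ from a macroscopic scale to all smaller scales at nearby singular points, then combine with $C^{1,\alpha}$ compactness. The continuity argument is identical in substance to the paper's $\epsilon$--$\delta$ version.

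For the uniform modulus there are two organizational differences worth noting. First, you explicitly secure the uniform growth bound $|u(x)|\le C_K|x-x_0|^\kappa$ on $K$ by applying Monneau's formula with $p_\kappa\equiv 0\in\P_\kappa$; the paper leaves this step implicit (its assertion that the rescalings $w_r^{x_0'}$ have ``uniformly bounded $C^{1,\alpha}(\overline{B_1})$ norms'' needs exactly this). Second, the paper first establishes a uniform $L^2(\partial B_1)$ bound on $w_r^{x_0'}-p_\kappa^{x_0'}$ via a finite cover of $K$, and then upgrades to $L^\infty(B_{1/2})$ by a separate compactness argument invoking the uniqueness of Signorini solutions with prescribed trace on $\partial B_1$. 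Your contradiction argument bypasses both the finite cover and the Signorini uniqueness: since $v_j\to v_0$ in $C^1_\loc$ and $\int_{\partial B_1}(v_j-p_\kappa^{x_0})^2\to 0$, you only need $v_0=p_\kappa^{x_0}$ at the single point $\zeta_0\in\partial B_1$, which follows from continuity. Your route is a bit more self-contained; the paper's has the minor advantage of yielding the intermediate $L^\infty(B_{1/2})$ closeness of the full rescaling to its blowup polynomial.
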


\begin{proof} Note that since $\P_\kappa$ is a convex subset of a finite-dimensional
vector space, namely the space of all $\kappa$-homogeneous
polynomials, all the norms on such space are equivalent. We can then
endow $\P_\kappa$ with the norm of $L^2(\partial B_1)$.

This being said, the proof is similar to that of the last part of
the previous theorem. Given $x_0\in \Sigma_\kappa(u)$ and $\epsilon>0$ fix $r_\epsilon=r_\epsilon(x_0)$ such that
$$
M_\kappa^{x_0}(r_\epsilon, u,
p_\kappa^{x_0}):=\frac{1}{r_\epsilon^{n-1+2\kappa}}\int_{\partial
  B_{r_\epsilon}} (u(x+x_0)-p^{x_0}_\kappa)^2<\epsilon.
$$
There exists $\delta_\epsilon=\delta_\epsilon(x_0)$ such that if
$x_0'\in\Sigma_\kappa(u)$ and $|x_0'-x_0|<\delta_\epsilon$, then
$$
M_\kappa^{x_0'}(r_\epsilon, u,
p_\kappa^{x_0})=\frac{1}{r_\epsilon^{n-1+2\kappa}}\int_{\partial
  B_{r_\epsilon}} (u(x+x_0')-p^{x_0}_\kappa)^2<2\epsilon.
$$
From the monotonicity of the Monneau's functional, we will have
that
$$
M_\kappa^{x_0'}(r, u, p_\kappa^{x_0})< 2\epsilon,\quad
0<r<r_\epsilon.
$$
Letting $r\to 0$, we will therefore obtain
$$
M^{x_0'}(0+,u,p_\kappa^{x_0})=\int_{\partial B_1}
(p_\kappa^{x_0'}-p_\kappa^{x_0})^2\leq 2\epsilon.
  $$
This shows the first part of the theorem.

To show the second part, we notice that we have
\begin{align*}
  \|u(\cdot+x_0')-p^{x_0'}_\kappa\|_{L^2(\partial B_r)}&\leq
  \|u(\cdot+x_0')-p^{x_0}_\kappa\|_{L^2(\partial B_r)}
  +\|p^{x_0}_\kappa-p^{x_0'}_\kappa\|_{L^2(\partial B_r)}\\
  &\leq
  2(2\epsilon)^{\frac12}r^{\frac{n-1}2+\kappa},
\end{align*}
for $|x_0'-x_0|<\delta_\epsilon$, $0<r<r_\epsilon$, or equivalently
\begin{equation}\label{eq:w-p-L2}	
\| w^{x_0'}_r-p^{x_0'}_\kappa\|_{L^2(\partial B_1)}\leq 2(2\epsilon)^{\frac12},
\end{equation}
where
$$
w^{x_0'}_r(x):=\frac{u(rx+x_0')}{r^\kappa}.
$$
Now, covering the compact $K\subset \Sigma_\kappa(u)\cap B_1$ with finitely many balls $B_{\delta_\epsilon(x_0^i)}(x_0^i)$ for some $x_0^i\in K$, $i=1,\ldots,N$, we obtain that \eqref{eq:w-p-L2} is satisfied for all $x_0'\in K$ with $r<r_\epsilon^K:=\min\{r_\epsilon(x_0^i) \mid i=1,\ldots, N\}$.
Now notice that
$$
w^{x_0'}_r\in\S,\quad p^{x_0'}_\kappa\in\S
$$
with uniformly bounded $C^{1,\alpha}(\overline{B_1})$ norms. The
solutions of the Signorini problem
\eqref{eq:signorini-1}--\eqref{eq:0-fbp} enjoy the uniqueness property
in the sense that they coincide if they have the same trace on
$\partial B_1$. Thus, arguing by contradiction and using a compactness
argument, we can establish the estimate
$$
\|w^{x_0'}_r-p^{x_0'}_\kappa(x)\|_{L^\infty(B_{1/2})}\leq
C_{\epsilon}
$$
for $x_0'\in K$, $0<r<r_\epsilon^K$ with
$C_{\epsilon}\to 0$ as $\epsilon\to 0$. Clearly, this implies the second part of the theorem. 
\end{proof}

We are now ready to prove Theorem~\ref{thm:sing-points} on the
structure of the singular set.

\begin{proof}[Proof of Theorem~\ref{thm:sing-points}] First, recall that the equality $\Sigma_\kappa(u)=\Gamma_\kappa(u)$ for $\kappa=2m$, $m\in\N$, is proved in Theorem~\ref{thm:blowup-sing}.

The proof of the structure of $\Sigma_\kappa^d(u)$ is based on two classical results in analysis: Whitney's
extension theorem \cite{Wh} and the implicit function theorem. This
parallels the approach in \cite{Ca2} for the classical obstacle
problem.

  \medskip \emph{Step 1: Whitney's extension.} 
Let $K=E_j$ be the compact subset of $\Sigma_\kappa(u)$ defined in the proof of Lemma~\ref{lem:sing-set-F-sigma}.
Write the polynomials
  $p^{x_0}_\kappa$ in the expanded form
  $$
  p^{x_0}_\kappa(x)=\sum_{|\alpha|=\kappa}
  \frac{a_\alpha(x_0)}{\alpha!}x^\alpha.
  $$
  Then the coefficients $a_\alpha(x)$ are continuous on
  $\Sigma_{\kappa}(u)$ by Theorem~\ref{thm:cont-dep-blowup}. Moreover, since $u(x)=0$ on
  $\Sigma_\kappa(u)$, we have
  $$
  |p^{x_0}_\kappa(x-x_0)|\leq \sigma(|x-x_0|)|x-x_0|^\kappa,\quad\text{for }
  x, x_0\in K.
  $$
  For any multi-index $\alpha$, $|\alpha|\leq \kappa$, define
  $$
  f_\alpha(x)=\begin{cases} 0 & |\alpha|<\kappa\\ a_\alpha(x) &
    |\alpha|=\kappa,
\end{cases}\qquad x\in\Sigma_k(u).
$$
We claim that the following compatibility condition is satisfied, which will enable us to apply the Whitney's extension theorem.

\begin{lemma}\label{lem:Whitney-compat}
For any $x_0,x\in K$
\begin{equation}\label{eq:compat-1}
f_\alpha(x)=\sum_{|\beta|\leq
  \kappa-|\alpha|}\frac{f_{\alpha+\beta}(x_0)}{\beta!}(x-x_0)^\beta +R_\alpha(x,x_0)
\end{equation}
with
\begin{equation}\label{eq:compat-2}
|R_\alpha(x,x_0)|\leq \sigma_\alpha(|x-x_0|)|x-x_0|^{\kappa-|\alpha|},
\end{equation}
where $\sigma_\alpha=\sigma_\alpha^K$ is a certain modulus of continuity.
\end{lemma}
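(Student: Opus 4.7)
The plan is to identify the right-hand sum with a partial derivative of the blowup polynomial, thereby reducing the compatibility condition to a quantitative estimate on the blowups themselves. Since $f_\gamma\equiv 0$ for $|\gamma|<\kappa$, only multi-indices $\beta$ with $|\beta|=\kappa-|\alpha|$ contribute, and differentiating $p_\kappa^{x_0}(y)=\sum_{|\gamma|=\kappa}\tfrac{a_\gamma(x_0)}{\gamma!}y^\gamma$ term by term yields the identity
\[
\sum_{|\beta|\leq\kappa-|\alpha|}\frac{f_{\alpha+\beta}(x_0)}{\beta!}(x-x_0)^\beta \;=\; \partial^\alpha p_\kappa^{x_0}(x-x_0).
\]
For $|\alpha|=\kappa$, the right-hand side is the constant $a_\alpha(x_0)$ and the required bound $|a_\alpha(x)-a_\alpha(x_0)|\leq\sigma_\alpha(|x-x_0|)$ follows at once from the uniform continuity of $x_0\mapsto p_\kappa^{x_0}$ on the compact set $K$ provided by Theorem~\ref{thm:cont-dep-blowup}. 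Thus the substance of the lemma is the case $|\alpha|<\kappa$, which I aim to obtain in the form
\[
|\partial^\alpha p_\kappa^{x_0}(x-x_0)| \;\leq\; \sigma_\alpha(|x-x_0|)\,|x-x_0|^{\kappa-|\alpha|}.
\]

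To derive this, apply Theorem~\ref{thm:cont-dep-blowup} at both $x_0,\,x\in K$: for $y$ in a fixed neighborhood of the origin there are remainders $r_1,r_2$ with
\[
u(y)=p_\kappa^{x_0}(y-x_0)+r_1(y),\qquad u(y)=p_\kappa^x(y-x)+r_2(y),
\]
where $|r_1(y)|\leq\sigma_K(|y-x_0|)|y-x_0|^\kappa$ and $|r_2(y)|\leq\sigma_K(|y-x|)|y-x|^\kappa$. Setting $w:=y-x$ and restricting to $|w|\leq|x-x_0|$ (so that $|y-x_0|\leq 2|x-x_0|$), subtraction yields
\[
\bigl|p_\kappa^{x_0}\bigl(w+(x-x_0)\bigr) - p_\kappa^x(w)\bigr| \;\leq\; \tilde\sigma_K(|x-x_0|)\,|x-x_0|^\kappa,\qquad |w|\leq|x-x_0|,
\]
for a new modulus $\tilde\sigma_K$ depending only on $\sigma_K$ and $\kappa$. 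Since $p_\kappa^{x_0}$ is a polynomial of degree $\kappa$, the exact multivariable Taylor identity
\[
p_\kappa^{x_0}\bigl(w+(x-x_0)\bigr)=\sum_{|\beta|\leq\kappa}\frac{\partial^\beta p_\kappa^{x_0}(x-x_0)}{\beta!}\,w^\beta
\]
holds, so the polynomial
\[
Q(w):=p_\kappa^x(w)-\sum_{|\beta|\leq\kappa}\frac{\partial^\beta p_\kappa^{x_0}(x-x_0)}{\beta!}\,w^\beta
\]
of degree $\leq\kappa$ in $w$ satisfies $\|Q\|_{L^\infty(B_{|x-x_0|})}\leq \tilde\sigma_K(|x-x_0|)\,|x-x_0|^\kappa$.

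Finally, rescale $w=|x-x_0|v$. On the finite-dimensional space of polynomials of degree $\leq\kappa$ the coefficient norm is equivalent to the $L^\infty(B_1)$ norm, so each coefficient $q_\beta$ of $w^\beta$ in $Q$ satisfies
\[
|q_\beta|\;\leq\; C_{n,\kappa}\,\tilde\sigma_K(|x-x_0|)\,|x-x_0|^{\kappa-|\beta|}.
\]
For $|\beta|<\kappa$ the homogeneity of $p_\kappa^x$ forces $q_\beta=-\partial^\beta p_\kappa^{x_0}(x-x_0)/\beta!$, which delivers the desired decay with $\sigma_\alpha:=\alpha!\,C_{n,\kappa}\tilde\sigma_K$ (and incidentally, for $|\beta|=\kappa$ it recovers a quantitative form of the continuity of $a_\beta$). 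The principal subtlety is keeping all error bounds uniform in $x_0\in K$; this is exactly what the compact-$K$ modulus of continuity in Theorem~\ref{thm:cont-dep-blowup} supplies, and the rest is polynomial bookkeeping.
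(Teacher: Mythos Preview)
Your proof is correct, but it follows a genuinely different route from the paper's. The paper argues by contradiction: assuming the modulus fails along sequences $x_0^i,x^i\in K=E_j$ with $\rho_i:=|x^i-x_0^i|\to 0$, it rescales $u$ around $x_0^i$ at scale $\rho_i$, passes to the limit $p_\kappa^{x_0}$ via Theorem~\ref{thm:cont-dep-blowup}, and then uses the \emph{two-sided} growth bounds defining $E_j$ to force the limit direction $\xi_0=\lim(x^i-x_0^i)/\rho_i$ to be a point of frequency $\kappa$ for the polynomial $p_\kappa^{x_0}$; Theorem~\ref{thm:blowup-sing} then gives $\partial^\alpha p_\kappa^{x_0}(\xi_0)=0$, contradicting the assumed lower bound. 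Your argument is instead direct: you subtract the two uniform Taylor expansions of $u$ at $x_0$ and at $x$, and read off the individual coefficients by norm equivalence on the finite-dimensional space of polynomials of degree $\leq\kappa$, exploiting only that $p_\kappa^x$ is $\kappa$-homogeneous. This is more elementary, avoids the blowup-and-frequency step entirely, and notably never uses the lower bound in \eqref{eq:Ej}; the uniform modulus on $K$ from Theorem~\ref{thm:cont-dep-blowup} suffices. The paper's geometric approach, by contrast, makes explicit the role of the nondegeneracy bound and is the form that carries over most transparently to the nonzero-obstacle setting of Lemma~\ref{lem:Whitney-compat-nonzero}, where one must additionally compare the recentered functions $u_k^{x_0^i}$ and $u_k^{x^i}$.
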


\begin{proof}
1) Consider first the case $|\alpha|=\kappa$. Then we have
$$
R_\alpha(x,x_0)=a_\alpha(x)-a_\alpha(x_0)
$$
and therefore $|R_\alpha(x,x_0)|\leq \sigma_\alpha(|x-x_0|)$ from continuity of the mapping $x\mapsto p^x$ on $K$.

\smallskip
2) For $0\leq |\alpha|<\kappa$ we have

$$
R_\alpha(x,x_0)=-\sum_{\gamma>\alpha, |\gamma|=\kappa} \frac{a_\gamma(x_0)}{(\gamma-\alpha)!}(x-x_0)^{\gamma-\alpha}= - \partial^\alpha p^{x_0}_\kappa (x-x_0).
$$
Now suppose that there exists no modulus of continuity $\sigma_\alpha$ such that \eqref{eq:compat-2} is satisfied for all $x_0, x\in K$. Then there exists $\delta>0$ and a sequence $x_0^i, x^i\in K$ with 
$$
|x^i-x_0^i|=:\rho_i\to 0
$$
such that
\begin{equation}\label{eq:compat-contr}
\Big|\sum_{\gamma>\alpha, |\gamma|=\kappa} \frac{a_\gamma(x_0^i)}{(\gamma-\alpha)!}(x^i-x_0^i)^{\gamma-\alpha}\Big|\geq \delta |x^i-x_0^i|^{\kappa-|\alpha|}.
\end{equation}
Consider the rescalings
$$
w^i(x)=\frac{u(x_0^i+\rho_i x)}{\rho_i^\kappa},\quad \xi^i=(x^i-x_0^i)/\rho_i.
$$
Without loss of generality we may assume that $x_0^i\to x_0\in K$ and $\xi^i\to \xi_0\in \partial B_1$.
From Theorem~\ref{thm:cont-dep-blowup} we have that
$$
|w^i(x)-p^{x_0^i}_\kappa(x)|\leq \sigma(\rho_i|x|)|x|^\kappa
$$
and therefore $w^i(x)$ converges locally uniformly in $\R^n$ to $p^{x_0}_\kappa(x)$.
Further, note that since $x^i$ and $x_0^i$ are from the set $K=E_j$, the inequalities \eqref{eq:Ej} are satisfied there. Moreover, we also have that similar inequalities are satisfied for the rescaled function $w^i$ at $0$ and $\xi^i$. Therefore, passing to the limit, we obtain that
$$
\tfrac1j \rho^\kappa\leq \sup_{|x-\xi_0|=\rho} p^{x_0}_\kappa(x)\leq j \rho^\kappa,\quad 0<\rho<\infty.
$$
This implies that $\xi_0$ is a point of frequency $\kappa=2m$ for the polynomial $p^{x_0}_\kappa$ and by Theorem~\ref{thm:blowup-sing} we have that $\xi_0\in \Sigma_\kappa (p^{x_0}_\kappa)$. In particular,
$$
\partial^\alpha p^{x_0}_\kappa(\xi_0)=0,\quad\text{for }|\alpha|<\kappa.
$$
However, dividing both parts of \eqref{eq:compat-contr} by $\rho_i^{\kappa-|\alpha|}$ and passing to the limit, we obtain that
$$
|\partial^\alpha p^{x_0}_\kappa(\xi_0)|=\Big|\sum_{\gamma>\alpha, |\gamma|=\kappa} \frac{a_\gamma(x_0)}{(\gamma-\alpha)!}(\xi_0)^{\gamma-\alpha}\Big|\geq \delta,
$$
a contradiction.
\end{proof}

So in all cases, the compatibility conditions \eqref{eq:compat-2} are satisfied and we
can apply Whitney's extension theorem. Thus, there exists a function
$F\in C^\kappa(\R^n)$ such that
$$
\partial^\alpha F=f_\alpha\quad\text{on } E_j
$$
for any $|\alpha|\leq \kappa$.

\medskip \emph{Step 2: Implicit function theorem.} Suppose now $x_0\in
\Sigma_\kappa^d(u)\cap E_j$. Recalling Definition~\ref{def:dim-sing-point} this means that
$$
d=\dim\{\xi\in\R^{n-1} \mid \xi\cdot \nabla_{x'} p^{x_0}_\kappa\equiv
0\}.
$$
Then there are $n-1-d$ linearly independent unit vectors
$\nu_i\in\R^{n-1}$, $i=1$, \ldots, $n-1-d$, such that
$$
\nu_i\cdot\nabla_{x'}p^{x_0}_\kappa\not=0\quad\text{on }\R^{n}.
$$
This implies that there exist multi-indices $\beta^i$ of order
$|\beta^i|=\kappa-1$ such that
$$
\partial_{\nu_i}(\partial^{\beta^i}p^{x_0}_\kappa)(0)\not=0.
$$
This can be written as
\begin{equation}\label{eq:cond-impl-f-thm}
\partial_{\nu_i}\partial^{\beta^i}F(x_0)\not=0,\quad i=1,\ldots, n-1-d.
\end{equation}
On the other hand,
$$
\Sigma^d_\kappa(u)\cap E_j \subset \bigcap_{i=1}^{n-1-d}
\{\partial^{\beta^i} F=0\}.
$$
Therefore, in view of the implicit function theorem, the condition
\eqref{eq:cond-impl-f-thm} implies  that $\Sigma^d_\kappa(u)\cap E_j$ is
contained in a $d$-dimensional manifold in a neighborhood of $x_0$. Finally, since $\Sigma_k(u)=\bigcup_{j=1}^{\infty} E_j$ this implies the statement of the theorem.
\end{proof}

\part{Nonzero Thin Obstacle}
\label{part:nonz-thin-obst}

\section{Normalization}
\label{sec:normalization-1}

We now want to study the Signorini problem with a not necessarily
zero thin obstacle $\phi$ defined on a flat portion of the boundary.
More precisely, given a function $\phi\in C^{k,1}(B_1')$, for some
$k\in \mathbb N$, we consider the unique minimizer in the Signorini
problem in $B_1^+$, with thin obstacle $\phi$. Such $v$ satisfies
\begin{gather}
\label{eq:signorini-1-nonzero}
\Delta v=0\quad\text{in } B_1^+\\
\label{eq:signorini-2-nonzero}
v-\phi\geq 0,\quad -\partial_{x_n} v\geq 0,\quad
(v-\phi)\,\partial_{x_n} v=0\quad
\text{on }B_1'\\
\label{eq:0-fbp-nonzero}
0\in\Gamma(v):=\partial\{v(\cdot,0)-\phi=0\},
\end{gather}
where \eqref{eq:signorini-1-nonzero} is to be interpreted in the
weak sense.
\begin{definition}
We say that $v\in C^{1,\alpha}(B_1^+\cup B_1')$ belongs to the class
$\S^\phi$ if it satisfies
\eqref{eq:signorini-1-nonzero}--\eqref{eq:0-fbp-nonzero}.
\end{definition}

The basic idea now is considering the difference
$u(x',x_n)=v(x',x_n)-\phi(x')$. The complication is that $u$ is no
longer harmonic in $B_1^+$, but instead satisfies
$$
\Delta u=-\Delta_{x'}\phi.
$$
This introduces a certain error in the computations that potentially
could prevent us from successfully studying nonregular points. Thus
we need a slightly refined argument that will enable us to control
the error.

\subsection{Subtracting the Taylor polynomial}
\label{sec:subtr-tayl-polyn}

\begin{lemma}[Harmonic extension of homogeneous polynomials]\label{lem:polynom-ext}
  Let $q_k(x')$ be a homogeneous polynomial of degree $k$ on
  $\R^{n-1}$. There exists a unique homogeneous polynomial
  $\tilde q_k$ of degree $k$ on $\R^{n}$ such that
\begin{align*}
  \Delta \tilde q_k=0&\quad\text{in }\R^{n},\\
  \tilde q_k(x',0)=q_k(x'),&\quad\text{for any }x'\in\R^{n-1},\\
  \tilde q_k(x',-x_n)=\tilde q_k(x',x_n)&\quad\text{for any
  }x'\in\R^{n-1},\ x\in\R.
\end{align*}

\end{lemma}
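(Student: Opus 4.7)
The plan is to prove existence by an explicit construction and uniqueness by the Cauchy--Kovalevskaya principle (equivalently, by matching coefficients).

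For existence, I would seek $\tilde q_k$ in the ansatz
\[
\tilde q_k(x',x_n)=\sum_{j=0}^{\lfloor k/2\rfloor} x_n^{2j}\,p_{k-2j}(x'),
\]
where each $p_{k-2j}$ is a homogeneous polynomial of degree $k-2j$ on $\R^{n-1}$. By construction $\tilde q_k$ is even in $x_n$ and homogeneous of degree $k$ in $x$. Setting $p_k:=q_k$ enforces the boundary condition on $\{x_n=0\}$. Computing the Laplacian and grouping like powers of $x_n$,
\[
\Delta \tilde q_k=\sum_{j\geq 0} x_n^{2j}\bigl[\Delta_{x'} p_{k-2j}+(2j+2)(2j+1)\,p_{k-2j-2}\bigr],
\]
so $\Delta\tilde q_k=0$ is equivalent to the recursion
\[
p_{k-2j-2}=-\frac{\Delta_{x'} p_{k-2j}}{(2j+1)(2j+2)},\qquad j=0,1,\ldots,\lfloor k/2\rfloor-1.
\]
Starting from $p_k=q_k$ this determines every $p_{k-2j}$ uniquely, and the recursion terminates consistently since the last polynomial produced has degree $0$ or $1$, on which $\Delta_{x'}$ vanishes automatically.

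For uniqueness, suppose $\tilde q_k^{(1)},\tilde q_k^{(2)}$ are both harmonic, even in $x_n$, and agree with $q_k$ on $\{x_n=0\}$. Their difference $r$ is a harmonic polynomial on $\R^n$, even in $x_n$, with $r(x',0)\equiv 0$. The evenness forces $\partial_{x_n}r(x',0)\equiv 0$ as well. Since $r$ is real-analytic and its Cauchy data on $\{x_n=0\}$ both vanish, the Cauchy--Kovalevskaya theorem (applied to $\partial_{x_n}^2 r=-\Delta_{x'}r$) gives $r\equiv 0$. Alternatively, writing $r$ in the even-in-$x_n$ expansion above and feeding the recursion with the initial data $p_k=0$ shows inductively that every coefficient polynomial is zero.

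The construction is essentially routine; the only place where one has to be slightly careful is verifying that the recursion is consistent at the top, i.e.\ that no compatibility condition is imposed on $q_k$ itself—this is ensured by the fact that the recursion is driven only by the known $p_{k-2j}$ and never overdetermined, since $k-2\lfloor k/2\rfloor\in\{0,1\}$ lies in the kernel of $\Delta_{x'}$ trivially. This step, minor as it is, will be the only spot worth checking carefully.
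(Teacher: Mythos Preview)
Your proof is correct. The uniqueness argument is essentially identical to the paper's: both observe that the difference has vanishing Cauchy data on $\{x_n=0\}$ (the Neumann datum vanishing by even symmetry) and then invoke Cauchy--Kovalevskaya.

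For existence, however, you take a genuinely different route. The paper argues as follows: for the special polynomials $q_k(x')=(e'\cdot x')^k$ with $e'\in\R^{n-1}$, $|e'|=1$, one can write down the extension explicitly as $\tilde q_k(x)=\Re(e'\cdot x'+i\,x_n)^k$, and then appeals to the classical fact that every homogeneous polynomial of degree $k$ on $\R^{n-1}$ is a linear combination of such $k$-th powers of linear forms. Your approach instead posits the even-in-$x_n$ ansatz and solves the resulting two-term recursion for the coefficient polynomials $p_{k-2j}$. This is more computational but also more self-contained: it does not rely on the (true but nontrivial) spanning property of powers of linear forms, and it makes the extension completely explicit, namely $p_{k-2j}=\dfrac{(-1)^j}{(2j)!}\,\Delta_{x'}^{\,j} q_k$. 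The paper's argument, on the other hand, is slicker and connects directly to the model solutions $\Re(x_1+i\,|x_n|)^\kappa$ that recur throughout the analysis of the thin obstacle problem. Your verification that the recursion terminates consistently (because $\Delta_{x'}$ annihilates polynomials of degree $0$ or $1$) is correct and is indeed the only point requiring care.
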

\begin{proof}
  1) \emph{Existence}. In the simplest case when $q_k(x')=x_j^k$,\quad
  $j=1,\ldots,n-1$, one can take $\tilde q_k(x)=\Re(x_j+i\,x_n)^k$.
Arguing in analogy with this situation, for
\begin{equation}\label{eq:simple-homogen-poly}
  q_k(x')=(e'\cdot x')^k,\quad\text{where } e'\in\R^{n-1},\ |e'|=1
\end{equation}
one can take
$$
\tilde q_k(x)=\Re(e'\cdot x'+i\,x_n)^k.
$$
Now the existence for an arbitrary polynomial of order $k$ follows
from the fact that any homogeneous polynomial of degree $k$ is a
linear combination of those of the form
\eqref{eq:simple-homogen-poly}.

2) \emph{Uniqueness}. By the linearity of the Laplacian, it is
sufficient to show that the only extension of $q_k=0$ is $\tilde
q_k=0$. Note that for any such extension both $\tilde q_k$ and
$\partial_{x_n}\tilde q_\kappa$ will vanish on $\R^{n-1}\times\{0\}$
(the latter following from even symmetry in $x_n$). Since $\tilde
q_k$ is also harmonic, by the Cauchy-Kovalevskaya theorem it must
vanish identically.
\end{proof}

Assume now that the lower dimensional obstacle is given by $\phi\in
C^{k,1}(B_1')$. Let $Q_k(x')$ be the Taylor polynomial of degree $k$
of $\phi$ at the origin , i.e.,
$$
\phi(x')=Q_k(x')+O(|x'|^{k+1}).
$$
Moreover, we will also have
$$
\Delta_{x'} \phi(x')=\Delta_{x'}Q_k(x')+O(|x'|^{k-1}).
$$
Representing $Q_k=\sum_{m=0}^k q_m$, where $q_m$ are homogeneous
polynomials of degree $m$, by Lemma~\ref{lem:polynom-ext} we can find
a harmonic extension $\tilde Q_k$ of $Q_k$ into $\R^n$. For the
solution $v$ of
\eqref{eq:signorini-1-nonzero}--\eqref{eq:0-fbp-nonzero} consider the
difference
$$
u(x',x_n):=v(x)-\tilde Q_k(x',x_n)-(\phi(x')-Q_k(x')).
$$
It is easy to see that $u$ satisfies
\begin{gather}
\label{eq:signorini-1-diff}
|\Delta u|=|\Delta_{x'}(\phi-Q_k)|\leq M|x'|^{k-1} \quad\text{in } B_1^+\\
\label{eq:signorini-2-diff}
u\geq 0,\quad -\partial_{x_n} u\geq 0,\quad u\,\partial_{x_n} u=0\quad
\text{on }B_1'\\
\label{eq:0-fbp-diff}
0\in\Gamma(u):=\partial\{u(\cdot,0)=0\}.
\end{gather}

\begin{definition} We say that $u\in C^{1,\alpha}(B_1^+\cup B_1')$
  belongs to the class $\S_k(M)$ if it satisfies
  \eqref{eq:signorini-1-diff}--\eqref{eq:0-fbp-diff} and moreover
  $$
  \|u\|_{C^1(B_1)}\leq M.
  $$
  We will use the full notation $\S_k(M)$ if the value of the
  constant $M$ is important. In all other cases we will denote this
  class simply by $\S_k$.
\end{definition}

As before, we may assume that $u\in\S_k$ is automatically extended to
$B_1$ by even symmetry
$$
u(x',-x_n)=u(x',x_n).
$$
For this extension, the distributional Laplacian $\Delta u$ is a sum
of a nonpositive measure supported in $B'_1$ and an $L^\infty$
function in $B_1$. More precisely, integrating by parts in $B_1^\pm$
and using \eqref{eq:signorini-1-diff}--\eqref{eq:signorini-2-diff}, we
have
$$
|\Delta u|\leq M|x'|^{k-1}+2|\partial_{x_n} u|
\H^{n-1}\big|_{B_1'}\quad\text{in }\D'(B_1).
$$

\section{Generalized frequency formula}
\label{sec:gener-freq-form}

By allowing nonzero obstacles one sacrifices Almgren's frequency
formula in its purest form. However, the following modified version
holds. In the case $k=2$ Theorem \ref{thm:gen-almgren} below has
first been established by Caffarelli, Salsa, Silvestre \cite{CSS}. For
their purposes they only needed to consider the class $\S_2$ since
it allows to capture the slowest growth rate of the solution at a
regular free boundary point and thus establish the optimal
regularity. For singular free boundary points, instead, we need to
consider the full range of values of $k$.

\begin{theorem}[Generalized Frequency Formula]\label{thm:gen-almgren} Let $u\in \S_k(M)$. With $H(r)$ as in
  \eqref{eq:IH} there exist $r_M>0$ and $C_M>0$ such that
  $$
 r \mapsto \Phi_k(r,u):=(r+C_M r^2)\frac{d}{dr}\log\max\left\{H(r),
    r^{n-1+2k}\right\},
  $$
 is nondecreasing for $0<r<r_M$.
\end{theorem}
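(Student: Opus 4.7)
The plan is to adapt the proof of Theorem~\ref{thm:almgren} while tracking the error coming from $\Delta u\not\equiv 0$, and to exploit the truncation $\max\{H(r),r^{n-1+2k}\}$ to absorb this error for small $r$. After extending $u$ evenly to $B_1$, we have $\Delta u=f_0+\mu$ in $\D'(B_1)$, where $|f_0|\leq M|x'|^{k-1}$ is an $L^\infty$ bulk term and $\mu\leq 0$ is a measure supported on $\Lambda(u)\subset B_1'$. The key observation is that $u=0$ and $x\cdot\nabla u=x'\cdot\nabla_{x'}u=0$ on $\supp\mu$, so every integration by parts against $\Delta u$ reduces to an integral against $f_0$ alone. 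Reproducing the computations from the proof of Theorem~\ref{thm:almgren} with this bulk error retained gives
\begin{align*}
H'(r)&=\tfrac{n-1}{r}H(r)+2I(r),\qquad I(r):=\int_{\partial B_r}uu_\nu=D(r)+\int_{B_r}uf_0,\\
D'(r)&=\tfrac{n-2}{r}D(r)+2\!\int_{\partial B_r}u_\nu^2-\tfrac{2}{r}\int_{B_r}(x\cdot\nabla u)f_0.
\end{align*}

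Next, I would split $(0,r_M)$ into the ``small'' regime $\{H(r)\leq r^{n-1+2k}\}$ and the ``large'' regime $\{H(r)>r^{n-1+2k}\}$. On the small regime $\Phi_k(r,u)\equiv(1+C_M r)(n-1+2k)$ is trivially nondecreasing, and continuity of $\Phi_k$ across the boundary between regimes is automatic from the $\max$. So it remains to prove monotonicity on the large regime. There, writing $\tilde N(r):=rI(r)/H(r)$, so that $\Phi_k(r,u)=(1+C_M r)(n-1+2\tilde N(r))$, a direct computation combining the three identities above yields
$$\tilde N'(r)=\frac{2r}{H(r)^2}\Bigl(H(r)\!\int_{\partial B_r}\!u_\nu^2-I(r)^2\Bigr)+\frac{\mathcal{E}(r)}{H(r)},$$
where $\mathcal{E}(r)=-(n-2)\int_{B_r}uf_0-2\int_{B_r}(x\cdot\nabla u)f_0+r\int_{\partial B_r}uf_0$. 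The Cauchy--Schwarz inequality $I^2=\bigl(\int uu_\nu\bigr)^2\leq H\int u_\nu^2$ makes the first term nonnegative, exactly as in the harmonic case. Expanding $\Phi_k'(r)=C_M(n-1+2\tilde N)+2(1+C_M r)\tilde N'(r)$, the desired monotonicity reduces to an error estimate of the form $|\mathcal{E}(r)|\leq C(n,k,M)\bigl(H(r)+r|I(r)|\bigr)$ in the large regime.

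The main obstacle is establishing precisely this error estimate. Each piece of $\mathcal{E}$ is handled by Cauchy--Schwarz using $\int_{\partial B_r}f_0^2\leq Cr^{n+2k-3}$ and $\int_{B_r}f_0^2\leq Cr^{n+2k-2}$; the surface piece then satisfies $r\bigl|\int_{\partial B_r}uf_0\bigr|\leq CH(r)^{1/2}r^{(n-1+2k)/2}\leq CH(r)$ on the large regime, since $H\geq r^{n-1+2k}$ forces $r^{(n-1+2k)/2}\leq H(r)^{1/2}$. The two volume pieces need a bound of the form $\int_0^r H(s)\,ds\leq Cr\,H(r)$, which is the delicate point and is the technical heart of the argument; it is proved by a bootstrap that propagates the claimed monotonicity of $\Phi_k$ downward from a radius where it holds automatically (e.g.\ the closest point above, where we cross into the small regime), and generalizes to arbitrary $k$ the argument of Caffarelli--Salsa--Silvestre~\cite{CSS} for $k=2$. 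Once the error bound is in hand, choosing $C_M=C_M(n,k,M)$ sufficiently large and $r_M$ correspondingly small ensures $\Phi_k'(r)\geq 0$ throughout the large regime, completing the proof.
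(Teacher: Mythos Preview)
Your overall architecture matches the paper's: extend evenly, note that the singular part of $\Delta u$ is killed by the factors $u$ and $x\cdot\nabla u$, split into the ``small'' and ``large'' regimes, and on the large regime reduce to a Cauchy--Schwarz term (nonnegative) plus an error $\mathcal{E}(r)$ coming from the bulk $f_0$. The derivative identities you wrote down are correct and coincide with the paper's Lemma~\ref{lem:deriv}.

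The gap is in your treatment of the volume pieces of $\mathcal{E}(r)$. You reduce these to the inequality $G(r)=\int_0^r H(s)\,ds\leq C\,r\,H(r)$ and propose to obtain it by a ``bootstrap that propagates the claimed monotonicity of $\Phi_k$ downward.'' This is circular as stated: a doubling bound of that type is exactly what monotonicity of $\Phi_k$ would give you, so you cannot invoke it in the middle of the proof of that same monotonicity. One might hope for an open--closed/continuity argument starting from the top of a large-regime interval, but the estimate you need at a given $r$ involves $H(s)$ for \emph{all} $s<r$, including radii in lower large-regime intervals where nothing has yet been established; your sketch does not address this, and it is not the argument in \cite{CSS} either.

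The paper avoids this circularity entirely. Instead of a doubling bound on $H$, it proves (Lemma~\ref{lem:H-D-est}) the Poincar\'e-type estimates
\[
H(r)\leq C_M\,r\,D(r)+C_M\,r^{n+2k+1},\qquad G(r)\leq C_M\,r^2 D(r)+C_M\,r^{n+2k+2},
\]
independently of any monotonicity, using the trace inequality together with two facts specific to the Signorini structure taken from \cite{CSS}: the one-sided mean-value bound $u(0)\geq \dashint_{\partial B_r}u - C r^{k+1}$ and the control $\int_{\partial B_r}u^-\leq C r^{n/2}D(r)^{1/2}$. With these in hand, Cauchy--Schwarz bounds every piece of the error by $C\,D(r)^{1/2}r^{n/2+k-1}+C\,r^{n+2k-1}$, and on the large regime the assumption $H(r)>r^{n-1+2k}$ combined with the first inequality above forces $D(r)\geq c\,r^{n-2+2k}$; this makes $|E(r)|$ uniformly bounded, which is exactly what is needed. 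So the missing ingredient in your proof is not a bootstrap but an a priori estimate of $G$ and $H$ in terms of $D$, and that estimate uses the Signorini condition in an essential way.
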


The rest of this section is devoted to proving
Theorem~\ref{thm:gen-almgren} and is rather technical. The reader
might want to skip it, at least in the first reading, and proceed
directly to the next section.

\subsection{Proof of Theorem \ref{thm:gen-almgren}}
In order to prove Theorem \ref{thm:gen-almgren} we first establish
two auxiliary lemmas. For $u\in \S_k(M)$, with $D(r)$ and $H(r)$ as
in \eqref{eq:IH}, we also consider the following quantities
\begin{equation}\label{eq:GI}
  G(r) := \int_{B_r} u^2 ,\quad
  I(r) := \int_{\partial B_r} u u_\nu=\int_{B_r} |\nabla u|^2
  +\int_{B_r} u\Delta u.
\end{equation}

\begin{lemma}\label{lem:deriv} For $u\in\S_k$ we have the following identities
\begin{align*}
  G'(r)&=H(r),\\
  H'(r)&=\frac{n-1}{r}\,H(r)+2\int_{\partial B_r} u u_\nu,\\
  D'(r)&=\frac{n-2}{r}\,D(r)+2\int_{\partial B_r}
  u_\nu^2-\frac2r\int_{B_r} (x\cdot\nabla u)\Delta u,\\
  I'(r) &=\frac{n-2}{r}\,I(r)+2\int_{\partial B_r}
  u_\nu^2-\frac2r\int_{B_r}(x\cdot\nabla u)\Delta u\\
  &\phantom{\mbox{}=\mbox{}}-\frac{n-2}r\int_{B_r} u\Delta
  u+\int_{\partial B_r} u\Delta u.
\end{align*}
\end{lemma}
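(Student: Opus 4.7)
The plan is to derive all four identities from the coarea formula combined with Green's first identity and the Rellich--Pohozaev identity, after carefully handling the distributional Laplacian of $u$. Recall that for $u\in\S_k$, extended by even symmetry, one has $\Delta u = f + 2(\partial_{x_n}u)\,\H^{n-1}|_{B_1'}$ with $f\in L^\infty(B_1)$. The key observation, which makes everything work, is that the Signorini complementarity \eqref{eq:signorini-2-diff} forces both $u\,\partial_{x_n}u=0$ and $(x\cdot\nabla u)\,\partial_{x_n}u=0$ $\H^{n-1}$-a.e.\ on $B_1'$: on $\Lambda(u)$ both $u$ and the tangential gradient $\nabla_{x'}u$ vanish, while on $B_1'\setminus\Lambda(u)$ one has $\partial_{x_n}u=0$, and $x_n\equiv 0$ throughout $B_1'$. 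Consequently the singular part of $\Delta u$ contributes nothing to $\int_{B_r}u\,\Delta u$ or $\int_{B_r}(x\cdot\nabla u)\,\Delta u$, and both quantities reduce to ordinary $L^\infty$ integrals against $f$.

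The first two identities are now immediate. From $G(r)=\int_0^r\bigl(\int_{\partial B_s}u^2\,d\sigma\bigr)ds$ one reads off $G'(r)=H(r)$; and rescaling $H(r)=r^{n-1}\int_{\partial B_1}u(r\omega)^2\,d\sigma(\omega)$ and differentiating under the integral yields the stated formula for $H'(r)$, the extra term $2\int_{\partial B_r}uu_\nu$ arising from the chain rule $\tfrac{d}{dr}u(r\omega)=\nabla u\cdot\omega=u_\nu$ on $\partial B_r$.

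For $D'(r)$, the coarea formula first gives $D'(r)=\int_{\partial B_r}|\nabla u|^2$. I would then invoke the pointwise identity
$$\operatorname{div}\bigl(|\nabla u|^2 x - 2(x\cdot\nabla u)\nabla u\bigr)=(n-2)|\nabla u|^2-2(x\cdot\nabla u)\Delta u,$$
integrate over $B_r$, and use $x\cdot\nu=r$ and $\nabla u\cdot\nu=u_\nu$ on $\partial B_r$ to obtain the claimed formula after dividing by $r$. The rigorous justification, which is the main technical obstacle of the lemma, proceeds by applying this classical identity separately in $B_r^+$ and $B_r^-$, where $u\in C^{1,\alpha}$ up to the flat boundary; the $B_1'$-boundary contributions either vanish pointwise (the $|\nabla u|^2(x\cdot\nu)$ term, since $x\cdot\nu=x_n\nu_n=0$ on $B_1'$) or reassemble into the singular piece of $\int(x\cdot\nabla u)\Delta u$, which has already been shown to be zero.

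For $I'(r)$, Green's identity in $B_r$ gives $I(r)=D(r)+\int_{B_r}u\,\Delta u$, with the last integral genuinely $L^\infty$ by the observation in the first paragraph. Differentiating via coarea, $I'(r)=D'(r)+\int_{\partial B_r}u\,\Delta u$, and substituting the formula for $D'(r)$ together with $\tfrac{n-2}{r}D(r)=\tfrac{n-2}{r}I(r)-\tfrac{n-2}{r}\int_{B_r}u\,\Delta u$ produces precisely the stated identity.
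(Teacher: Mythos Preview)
Your proposal is correct and is exactly the standard computation the paper has in mind: it omits the proof entirely, referring to \cite{GL}, \cite{GL2} for similar calculations, and your coarea/Rellich--Pohozaev/Green's-identity derivation is precisely that. Your careful observation that the singular part of $\Delta u$ on $B_1'$ contributes nothing to $\int_{B_r}u\,\Delta u$ and $\int_{B_r}(x\cdot\nabla u)\,\Delta u$ is the content of the paper's Remark~\ref{rem:discard} immediately following the lemma.
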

\begin{proof} The proof of these statements is rather standard and is
  therefore omitted. The reader may consult \cite{GL}, \cite{GL2} for similar
  computations.
\end{proof}

\begin{remark}\label{rem:discard} Note that both $u$ and $x\cdot \nabla u$ vanish
  continuously on $\supp\partial_{x_n} u\big|_{B_1'}=\supp \Delta
  u\big|_{B_1'}$, hence we can discard the integrals of $(x \cdot\nabla u)\Delta u$
  and $u\Delta u$ over $B_r'$ and $\partial B_r'$.
\end{remark}

\begin{lemma}\label{lem:H-D-est}
  For any $u\in\S_k(M)$ we have the following estimates
\begin{align}\label{eq:est-3}
  \int_{\partial B_r}u^2&\leq C_M r\int_{B_r} |\nabla u|^2+ C_M
  r^{n+2k+1}\\
\label{eq:est-4}
\int_{B_r}u^2&\leq C_M r^2\int_{B_r} |\nabla u|^2+ C_M r^{n+2k+2}
\end{align}
\end{lemma}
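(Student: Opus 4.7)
My plan is to prove \eqref{eq:est-3} first and then obtain \eqref{eq:est-4} from it by a simple radial integration. Indeed, since $G'(r)=H(r)$ (Lemma~\ref{lem:deriv}) and $D(\rho)$ is nondecreasing in $\rho$, once \eqref{eq:est-3} is available
\[
G(r)=\int_0^r H(\rho)\,d\rho \leq \int_0^r\bigl(C_M\rho\, D(\rho)+C_M\rho^{n+2k+1}\bigr)d\rho \leq C_M r^2 D(r)+C_M r^{n+2k+2},
\]
which is \eqref{eq:est-4} with a possibly enlarged constant.

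\textbf{Contradiction and rescaling.} To prove \eqref{eq:est-3} I would argue by contradiction and blowup. Suppose the estimate fails: for every $j\in\N$ there exist $u_j\in\S_k(M)$ and $r_j\in(0,1)$ with
\[
H(r_j,u_j)>j\,r_j D(r_j,u_j)+j\,r_j^{n+2k+1}.
\]
Set $\lambda_j:=\bigl(H(r_j,u_j)/r_j^{n-1}\bigr)^{1/2}$ and define the rescalings $w_j(x):=u_j(r_j x)/\lambda_j$ on $B_1$. By construction $\|w_j\|_{L^2(\partial B_1)}=1$; dividing the failing inequality by $H(r_j,u_j)$ and using $r_j D(r_j,u_j)/H(r_j,u_j)=D(1,w_j)$, $r_j^{n+2k+1}/H(r_j,u_j)=r_j^{2k+2}/\lambda_j^2$, it becomes
\[
1>j\,D(1,w_j)+j\,r_j^{2k+2}/\lambda_j^2,
\]
so both $D(1,w_j)\to 0$ and $r_j^{k+1}/\lambda_j\to 0$.

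\textbf{Passing to the limit.} Each $w_j$ solves the thin obstacle problem on $B_1^+$ with Signorini conditions on $B_1'$ and $w_j(0)=0$, together with the small bulk error
\[
|\Delta w_j(x)|\leq \frac{M r_j^{k+1}}{\lambda_j}|x'|^{k-1}\to 0\quad\text{in }B_1^+.
\]
The $L^2(\partial B_1)$-normalization combined with $D(1,w_j)\to 0$ yields a uniform $W^{1,2}(B_1)$ bound via the standard trace inequality; together with the vanishing bulk perturbation, the $C^{1,\alpha}$ regularity theory for perturbed Signorini solutions discussed in Section~\ref{sec:known-results} (and in \cite{CSS}) produces a uniform estimate $\|w_j\|_{C^{1,\alpha}(\overline{B_{1/2}^\pm}\cup B_{1/2}')}\leq C$. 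Along a subsequence, $w_j\to w_\infty$ uniformly in a neighborhood of the origin, weakly in $W^{1,2}(B_1)$, and strongly in $L^2(\partial B_1)$. Since $D(1,w_j)\to 0$, $\nabla w_\infty\equiv 0$ in $B_1$, so $w_\infty$ is constant; since $w_j(0)=0$ for all $j$ and the convergence is uniform near the origin, the constant must be $0$, i.e.\ $w_\infty\equiv 0$. But then $\|w_\infty\|_{L^2(\partial B_1)}=\lim_j\|w_j\|_{L^2(\partial B_1)}=1$, a contradiction.

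\textbf{Main obstacle.} The delicate step is securing the uniform $C^{1,\alpha}_{\loc}$ bound on $w_j$: the $C^1$-bound $\|w_j\|_{C^1(B_1)}\leq M r_j/\lambda_j$ inherited from $u_j\in\S_k(M)$ need not remain controlled under the rescaling when $r_j\to 0$, so the naive pointwise bound at the origin is lost. One must instead invoke the intrinsic Signorini regularity theory, which applies precisely because the single failing inequality forces the rescaled bulk perturbation $r_j^{k+1}/\lambda_j$ to vanish in the limit, putting the $w_j$ in a compact family of (nearly) standard Signorini solutions.
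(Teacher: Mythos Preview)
Your reduction of \eqref{eq:est-4} to \eqref{eq:est-3} by integrating in $r$ is exactly what the paper does. For \eqref{eq:est-3}, however, the paper argues \emph{directly} rather than by contradiction: it starts from the trace inequality $\int_{\partial B_r}|u-\bar u_r|^2\le Cr\,D(r)$, then invokes two quantitative facts from \cite{CSS} --- a sub-mean-value inequality $u(0)\geq \dashint_{\partial B_r}u - Cr^{k+1}$ (Lemma~2.9 there), which together with $u(0)=0$ controls $\bar u_r$, and a Poincar\'e-type bound $\int_{\partial B_r}u^-\leq Cr^{n/2}D(r)^{1/2}$ (Lemma~2.13 there). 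Combining these yields $\int_{\partial B_r}|u|\leq Cr^{n/2}D(r)^{1/2}+Cr^{n+k}$, and squaring gives \eqref{eq:est-3}.

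Your compactness argument is correct and self-contained once one accepts the $C^{1,\alpha}_{\loc}$ estimate for perturbed Signorini solutions in terms of the $W^{1,2}$ norm and the $L^\infty$ bound on the bulk Laplacian; the paper itself invokes precisely this estimate later (proof of Lemma~\ref{lem:resc-unif-bound}, citing \cite{Ca}), so there is no circularity. What your route buys is conceptual economy: you never need the specific \cite{CSS} lemmas, only the qualitative fact that a sequence of normalized Signorini solutions with vanishing Dirichlet energy and vanishing bulk perturbation converges to a constant, which the condition $w_j(0)=0$ pins down as zero. What the paper's route buys is an effective constant $C_M$ and a proof that stays entirely at the level of integral inequalities, without passing through the regularity theory. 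Both are valid; the paper's is more quantitative, yours more robust.
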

Equivalently, we may write these inequalities as
\begin{align}\label{eq:est-3'}
  H(r)&\leq C_M r D(r)+ C_Mr^{n+2k+1}\\
\label{eq:est-4'}
G(r)&\leq C_M r^2 D(r)+ C_Mr^{n+2k+2}
\end{align}

\begin{proof} These inequalities are essentially established in
  \cite{CSS}. Below we outline the main steps with references to the
  corresponding lemmas in \cite{CSS}.

  One starts with the well-known trace inequality
\begin{equation}\label{eq:est-1}
\int_{\partial B_r} |u(x)-\bar u_r|^2\leq C r\int_{B_r} |\nabla
u|^2,
\end{equation}
where
$$
\bar u_r=\dashint_{\partial
  B_r}u.
$$
Next, since $u\in\S_k$, by \cite{CSS}*{Lemma 2.9}
one has
\begin{equation}\label{eq:est-2}
u(0)\geq \dashint_{\partial B_r} u -C r^{k+1}.
\end{equation}
Now, \eqref{eq:est-1} gives
$$
\int_{\partial B_r} u^2\leq C r \int_{B_r} |\nabla u|^2+ 2\bar u_r
\int_{\partial B_r} u.
$$
Further, \eqref{eq:est-2} gives (since $u(0)=0$)
$$
\dashint_{\partial B_r} u\leq C r^{k+1},
$$
which implies
$$
\int_{\partial B_r} u^+\leq \int_{\partial B_r} u^- + C r^{n+k}.
$$
On the other hand,
$$
\int_{\partial B_r} u^-\leq C r^{\frac{n}2}\left(\int_{B_r}|\nabla
  u|^2\right)^{\frac12},
$$
see \cite{CSS}*{Lemma 2.13}. Hence
$$
\int_{\partial B_r} |u|\leq C r^{\frac{n}2}\left(\int_{B_r}|\nabla
  u|^2\right)^{\frac12}+ C r^{n+k}.
$$
Therefore
\begin{align*}
  \int_{\partial B_r} u^2&\leq C r\int_{B_r} |\nabla
  u|^2+\frac{C}{r^{n-1}}\left(\int_{\partial B_r} |u|\right)^2\\
  &\leq C r\int_{B_r} |\nabla u|^2+ C r^{n+2k+1}.
\end{align*}
This proves \eqref{eq:est-3}. Integrating in $r$, we obtain
\eqref{eq:est-4}.
\end{proof}

We are now ready to prove the Generalized Frequency Formula.

\begin{proof}[Proof of Theorem \ref{thm:gen-almgren}] 1) First we want to make a remark on the definition of
  $\Phi_k(r,u)$. The functions $H(r)$ and $r^{n-1+2k}$ are
  continuously differentiable and therefore the function $\max\{H(r),
  r^{n-1+2k}\}$ is absolutely continuous or, equivalently, belongs to
  the
  Sobolev space $W^{1,1}_\loc((0,1))$. It follows that $\Phi_k$
  is uniquely identified only up to a set of measure zero. The
  monotonicity of $\Phi_k$ should be understood in the sense that
  there exists a monotone increasing function which equals
  $\Phi_k$ almost everywhere. Therefore, without loss of generality we
  may assume that
  $$
  \Phi_k(r,u)=(r+C_Mr^2)\frac{d}{dr}\log
  r^{n-1+2k}=(n-1+2k)(1+C_Mr)
  $$
  on $F:=\{r\in(0,1) \mid H(r)\leq r^{n-1+2k}\}$ and
  $$
  \Phi_k(r,u)=(r+C_Mr^2)\frac{d}{dr}\log
  H(r)=(r+C_Mr^2)\frac{H'(r)}{H(r)}
  $$
  on $U:=\{r\in(0,1) \mid H(r)>r^{n-1+2k}\}$. Following an idea introduced in \cite{GL} we now note that it will
  be enough to check that $\Phi_k'(r,u)>0$ in $U$. Indeed, it is clear
  that $\Phi_k$ is monotone on $F$ and if $(r_0,r_1)$ is a maximal
  open interval in $U$, then $r_0,r_1\in F$ and we will have that
  $$
  \Phi_k(r_0,u)\leq \Phi_k(r_0+,u)\leq \Phi_k(r_1-,u)\leq
  \Phi_k(r_1,u).
  $$
  Therefore, we will concentrate only on the set $U$.

  \medskip 2) Now suppose $r\in(0,1)$ is such that $H(r)>
  r^{n-1+2k}$. Using \eqref{eq:GI} and the second identity in
  Lemma~\ref{lem:deriv} we find
\begin{align*}
  \Phi_k(r,u)&=(r+C_M r^2)\frac{H'(r)}{H(r)}\\
  &=(r+C_M r^2)\left(\frac{n-1}r + 2\frac{I(r)}{H(r)}\right)\\
  &=(n-1)(1+C_M r)+ 2r(1+C_M r)\frac{I(r)}{H(r)}.
\end{align*}
Since $(n-1)(1+C_M r)$ is clearly nondecreasing, it will be enough
to show the monotonicity of $r(1+C_M r)\frac{I(r)}{H(r)}$.

From Lemma \ref{lem:deriv} we now have
\begin{align*}
  &\frac{d}{dr}\log\left(r(1+C_M
    r)\frac{I(r)}{H(r)}\right)=\frac1r+\frac{C_M}{1+C_Mr}+\frac{I'(r)}{I(r)}-\frac{H'(r)}{H(r)}\\&=\frac{C_M}{1+C_M
    r}+2\left\{\frac{\int_{\partial B_r}
      u_\nu^2}{\int_{\partial B_r} u u_\nu}-\frac{\int_{\partial
        B_r}
      u u_\nu}{\int_{\partial B_r} u^2}\right\}\\
  &\phantom{\mbox{}=\mbox{}}+\frac{-\frac2r \int_{B_r}(x\cdot \nabla
    u)\Delta u -\frac{n-2}r\int_{B_r} u\Delta u+\int_{\partial B_r}
    u\Delta u}{\int_{\partial B_r} u u_\nu}.
\end{align*}
The expression in curly brackets in the right-hand side of the
latter equation is the same as that in the proof of
Theorem~\ref{thm:almgren} and is nonnegative by the Cauchy-Schwarz
inequality. We thus obtain
\[
\frac{d}{dr}\log\left(r(1+C_M
    r)\frac{I(r)}{H(r)}\right)\geq \frac{C_M}{1+C_Mr} + E(r),
\]
where we have let
$$
E(r):=\frac{\displaystyle -\frac2r \int_{B_r}(x\cdot\nabla
  u)\Delta u  - \frac{n-2}r\int_{B_r} u\Delta u+\int_{\partial B_r}
  u\Delta u}{\displaystyle \int_{\partial B_r} u u_\nu}.
$$
This is the error term that derives from the non-vanishing of
$\Delta u$. Since the first term $C_M/(1+rC_M)$ is greater than a
positive constant for small $r$, to complete the proof of the
theorem it will be enough to show that $E(r)$ is bounded below.

\smallskip
3) \emph{Estimating $E(r)$.} We estimate the denominator and the
numerator of $E$ separately.

\medskip \emph{Denominator}: Using Cauchy-Schwarz and the inequalities
\eqref{eq:est-3'}--\eqref{eq:est-4'}, we have

\begin{align*}
  \int_{\partial B_r} u\partial_\nu u&=\int_{B_r} |\nabla
  u|^2+\int_{B_r}
  u\Delta u\\
  &\geq D(r)-2\left(\int_{B_r^+}
    u^2\right)^{\frac12}\left(\int_{B_r^+}|\Delta u|^2\right)^{\frac12}\\
  &\geq D(r)-C G(r)^{\frac12} r^{\frac{n}2+k-1}\\
  &\geq D(r)-C \left(rD(r)^{\frac12}+
    r^{\frac{n}2+k+1}\right)r^{\frac{n}2+k-1}\\
  &\geq D(r)-C D(r)^{\frac12} r^{\frac{n}2+k}-C r^{n+2k}.
\end{align*}

\medskip \emph{Numerator}: Again using Cauchy-Schwarz and the
inequalities \eqref{eq:est-3'}--\eqref{eq:est-4'}, we have
\begin{align*}
  \left| \frac1r\int_{B_r} u\Delta u\right|&\leq
  \frac2r\left(\int_{B_r^+}
    u^2\right)^{\frac12}\left(\int_{B_r^+}|\Delta
    u|^2\right)^{\frac12}\\
  &\leq CD(r)^{\frac12} r^{\frac{n}2+k-1}+C r^{n+2k-1}
\end{align*}
\begin{align*}
  \left|\frac1r\int_{B_r}\Delta u(x\cdot\nabla u)\right|&\leq
  \frac2r\left(\int_{B_r^+} |\nabla u|^2
    |x|^2\right)^{\frac12}\left(\int_{B_r^+}|\Delta
    u|^2\right)^{\frac12}\\
  &\leq CD(r)^{\frac12} r^{\frac{n}2+k-1}
\end{align*}
\begin{align*}
  \left|\int_{\partial B_r} u\Delta u\right|&\leq
  2\left(\int_{\partial B_r^+}
    u^2\right)^{\frac12}\left(\int_{\partial B_r^+}|\Delta
    u|^2\right)^{\frac12}\\
  &\leq C H(r)^{\frac12}r^{\frac{n-1}2+k-1}\\
  &\leq C\left(r^{\frac12}
    D(r)^{\frac12}+r^{\frac{n}2+k+\frac12}\right)r^{\frac{n-1}2+k-1}\\
  &\leq C D(r)^{\frac12}r^{\frac{n}2+k-1}+C r^{n+2k-1}.
\end{align*}

Now, collecting the estimates on the denominator and the numerator of
$E(r)$ we obtain
$$
|E(r)|\leq
C\frac{D(r)^{\frac12}r^{\frac{n}2+k-1}+r^{n+2k-1}}{D(r)-C
  D(r)^{\frac12} r^{\frac{n}2+k}-C r^{n+2k}}
$$
Finally, recall that we assume
$$
H(r)> r^{n-1+2k}.
$$
Then by Lemma~\ref{lem:H-D-est} we also have
$$
D(r)> c\, r^{n-2+2k}.
$$
The latter inequality now implies that $|E(r)|$ is uniformly bounded
for sufficiently small $r$. This completes the proof of the theorem.
\end{proof}

\section{Growth near the free boundary}\label{sec:growth-near-free}

To get a better sense of the relation between the frequency function
in Theorem~\ref{thm:almgren}, and the generalized frequency in
Theorem~\ref{thm:gen-almgren} above, we note that for $u\in\S_k$, we
have
$$
\Phi_k(r,u)=(1+C_Mr) (n-1+2 N(r,u))\quad \text{if } H(r)>
r^{n-1+2k}.
$$
So one expects $\Phi_k(0+,u)$ to behave similarly to $n-1+2
N(0+,u)$.

\begin{lemma}[Consistency of $\Phi_k(0+,u)$]\label{lem:consist} Let $u\in\S_k$. For any $m$ such that $2\leq m\leq
k$ one has
  $$
  \Phi_m(0+,u)=\min\{\Phi_k(0+,u),n-1+2m\}.
  $$
 If $m=k$ we obtain in particular,
  $$
  \Phi_k(0+,u)\leq n-1+2k.
  $$
\end{lemma}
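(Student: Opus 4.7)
The plan is to reduce the statement to a dichotomy on the size of $H(r)=\int_{\partial B_r}u^2$ relative to the two thresholds $r^{n-1+2k}$ and $r^{n-1+2m}$, exploiting the monotonicity of both $\Phi_k$ and $\Phi_m$ granted by Theorem~\ref{thm:gen-almgren}. A preliminary observation is that $|x'|^{k-1}\leq |x'|^{m-1}$ on $B_1'$ whenever $m\leq k$, so the condition $|\Delta u|\leq M|x'|^{k-1}$ in \eqref{eq:signorini-1-diff} forces $u\in\S_m(M)$ as well; hence Theorem~\ref{thm:gen-almgren} applies to $\Phi_m(\cdot,u)$, and up to enlarging $C_M$ (which preserves monotonicity and leaves the limits at $0$ unchanged), we may use a common $C_M$ in both functionals.

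I would first dispose of the case $m=k$, i.e.\ the bound $\Phi_k(0+,u)\leq n-1+2k$. If $H(r_j)\leq r_j^{n-1+2k}$ along some sequence $r_j\to 0+$, then by definition $\Phi_k(r_j,u)=(1+C_Mr_j)(n-1+2k)\to n-1+2k$, and monotonicity forces the full limit to equal $n-1+2k$. Otherwise $H(r)>r^{n-1+2k}$ for all small $r$, so $\Phi_k(r,u)=(1+C_Mr)\,rH'(r)/H(r)$; if one had $\Phi_k(0+,u)>n-1+2k+\epsilon$, integrating the resulting inequality $H'(r)/H(r)\geq (n-1+2k+\epsilon/2)/r$ from $r$ to a fixed $r_1$ would give $H(r)\leq C r^{n-1+2k+\epsilon/2}$, contradicting $H(r)>r^{n-1+2k}$ for $r$ small.

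For $2\leq m<k$ I would split into three cases. \emph{Case 1:} if $H(r_j)\leq r_j^{n-1+2k}$ along some $r_j\to 0+$, the special case gives $\Phi_k(0+,u)=n-1+2k$, and since $r_j^{n-1+2k}<r_j^{n-1+2m}$ the same reasoning yields $\Phi_m(0+,u)=n-1+2m$, which equals the required minimum. \emph{Case 2:} if $H(r)>r^{n-1+2k}$ for all small $r$ but $H(r_j)\leq r_j^{n-1+2m}$ on some $r_j\to 0+$, then $\Phi_m(0+,u)=n-1+2m$ as above; assuming $\Phi_k(0+,u)<n-1+2m$ and using monotonicity to get $\Phi_k(r,u)\leq n-1+2m-\epsilon/2$ for small $r$, the analogous integration gives $H(r)\geq c\,r^{n-1+2m-\epsilon/2}$ and hence $H(r)/r^{n-1+2m}\to\infty$, contradicting the subsequence; thus $\Phi_k(0+,u)\geq n-1+2m$ and the minimum is again $n-1+2m=\Phi_m(0+,u)$. \emph{Case 3:} if $H(r)>r^{n-1+2m}$ for all small $r$, then $H(r)>r^{n-1+2k}$ as well, so $\Phi_m(r,u)=\Phi_k(r,u)=(1+C_Mr)rH'(r)/H(r)$ on a right-neighborhood of $0$; therefore $\Phi_m(0+,u)=\Phi_k(0+,u)$, which by the special case applied to $\S_m$ is at most $n-1+2m$, yielding the required minimum.

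The main obstacle is Case 2: one must pass from the existence of a single subsequence along which $H$ is small to a statement about the full right-limit of $\Phi_k$. The key point is that monotonicity makes $\Phi_k(0+,u)$ the infimum of $\Phi_k(r,u)$, rather than only a subsequential limit, so the integration argument controls $H(r)$ uniformly in $r$ instead of along a subsequence, producing the contradiction.
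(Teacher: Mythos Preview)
Your proof is correct and follows essentially the same strategy as the paper's. Both arguments rest on the same two observations: (i) if $H(r_j)\leq r_j^{\,n-1+2\ell}$ along some sequence then $\Phi_\ell(0+,u)=n-1+2\ell$ by monotonicity, and (ii) an upper bound on $\Phi_k(r,u)=(1+C_Mr)\,rH'(r)/H(r)$ integrates to a lower bound on $H(r)$ of the form $H(r)\geq c\,r^{n-1+2\mu'}$, forcing $H$ above the relevant threshold. The only difference is organizational: the paper splits into the cases $\kappa<m$ versus $\kappa\geq m$ (where $\Phi_k(0+,u)=n-1+2\kappa$), while you split according to whether $H(r)$ eventually dominates, or fails along a subsequence to dominate, the thresholds $r^{n-1+2k}$ and $r^{n-1+2m}$; these are dual viewpoints of the same dichotomy. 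Your remark that one may take a common constant $C_M$ in both $\Phi_k$ and $\Phi_m$ is a harmless simplification (the paper instead carries the factor $(1+C_mr)/(1+C_kr)$, which tends to $1$).
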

\begin{proof}
  1) We start with the latter inequality. Let $\kappa$ be such that
  $$
  \Phi_k(0+,u)=n-1+2\kappa.
  $$
  We want to show that $\kappa\leq k$. Observe that,
  in general, if $H(r)<r^{n-1+2k}$ along a sequence $r=r_j\to 0+$, then we must have $\Phi_k(0+,u)=n-1+2k$. Therefore if
  $\kappa\not=k$, we must have for small $r$
  $$
  H(r)\geq r^{n-1+2k},\quad
  \Phi_k(r,u)=(r+C\,r^2)\frac{H'(r)}{H(r)}.
  $$

  Assume now $\kappa>k$. Fix some $\kappa'\in (k,\kappa)$. Then for
  small enough $0<r\leq r_0$
  $$
  r\frac{H'(r)}{H(r)}\geq n-1+2\kappa'.
  $$
  Dividing by $r$ and integrating from $r$ to $r_0$, we obtain
  $$
  \log\frac{H(r_0)}{H(r)}\geq (n-1+2\kappa')\log \frac{r_0}{r},
  $$
  which gives
  $$
  H(r)\leq C r^{n-1+2\kappa'}.
  $$
  This, however, contradicts the lower bound $H(r)\geq r^{n-1+2k}$.
  We must therefore have $\kappa\leq k$, and this establishes the second part of the
  lemma.

  \medskip 2) For the first part of the lemma we need to show that if
  $$
  \Phi_m(0+,u)=n-1+2\mu
  $$
  then
  $$
  \mu=\min\{\kappa, m\}.
  $$
  We consider two possibilities:

  \smallskip a) $\kappa<m$. Fix $\mu'\in (\kappa,m)$. Since
  $\kappa<k$, we must have $H(r)\geq r^{n-1+2k}$ for small $r$ and
  since $\kappa<\mu'$, we must have
  $$
  r\frac{H'(r)}{H(r)}\leq n-1+2\mu'
  $$
  for $0<r<r_0$. Integrating, we obtain that
  $$
  H(r)\geq H(r_0) \left(\frac{r}{r_0}\right)^{n-1+2\mu'}=c\,
  r^{n-1+2\mu'}
  $$
  for $0<r<r_0$. In particular, $H(r)> r^{n-1+2m}$ for sufficiently
  small $r$ and therefore
  $$
  \Phi_m(r,u)=(r+C_m r^2)\frac{H'(r)}{H(r)}=\frac{1+C_m r}{1+C_k
    r}\,\Phi_k(r,u).
  $$
  Hence $\mu=\kappa$ in this case.

  \smallskip b) $\kappa\geq m$. We need to show that $\mu=m$ in this
  case. In general, we know that $\mu\leq m$ from part 1) above,  so arguing by
  contradiction, assume $\mu<m$. Fix $\mu'\in (\mu,m)$. Then similarly
  to the arguments above, we will have
  $$
  r\frac{H'(r)}{H(r)}< n-1+2\mu'
  $$
  and consequently there exists $c>0$ such that
  $$
  H(r)>c\, r^{n-1+2\mu'},
  $$
  for small $0<r<r_0$. But then again $H(r)> r^{n-1+2m}\geq
  r^{n-1+2k}$ and therefore
  $$
  \Phi_m(r,u)=\frac{1+C_m r}{1+C_k r}\,\Phi_k(r,u)
  $$
  which again implies $\mu=\kappa$. However, as $\mu<m\leq \kappa$ this is not
  possible. This contradiction proves that $\mu=m$
  in this case.
\end{proof}

\begin{lemma}[Minimal and maximal
  frequency]\label{lem:min-max-homogen-nonzero} Let $u\in \S_k$ with
  $\Phi_k(0+,u)=n-1+2\kappa$, then one has
  $$
  2-\frac12\leq \kappa\leq k.
  $$
  Moreover, one has that either
  $$
  \kappa=2-\frac12\quad\text{or}\quad 2\leq \kappa\leq k.
  $$
\end{lemma}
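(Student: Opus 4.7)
The upper bound $\kappa\le k$ is Lemma~\ref{lem:consist}, so the task is the lower bound $\kappa\ge 3/2$ together with the gap between $3/2$ and $2$. The plan is to produce a blowup of $u$ that lies in the zero-obstacle class $\S$ and inherits the homogeneity $\kappa$, so that Lemma~\ref{lem:min-homogen} applies directly. The boundary case $\kappa=k$ is immediate provided $k\ge 2$ (which we assume, since otherwise the desired range for $\kappa$ is empty), so from now on suppose $\kappa<k$.

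Introduce the Almgren-type rescalings
\[
u_r(x):=\frac{u(rx)}{d_r},\qquad d_r:=\bigl(H(r)/r^{n-1}\bigr)^{1/2},
\]
normalized so that $\|u_r\|_{L^2(\partial B_1)}=1$. Since $\Phi_k$ is nondecreasing with $\Phi_k(0+,u)=n-1+2\kappa<n-1+2k$, one has $H(r)>r^{n-1+2k}$ for all sufficiently small $r$; and for any fixed $\kappa'\in(\kappa,k)$ the pointwise inequality $rH'(r)/H(r)\le n-1+2\kappa'$ integrated from $r$ up to some $r_0$ yields $H(r)\ge c\,r^{n-1+2\kappa'}$, hence $d_r\ge c\,r^{\kappa'}$. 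Combined with $|\Delta u(rx)|\le M(r|x'|)^{k-1}$, this gives
\[
|\Delta u_r(x)|\le \frac{r^2}{d_r}\,M\,(r|x'|)^{k-1}\le C\,r^{k+1-\kappa'}|x'|^{k-1}\longrightarrow 0.
\]
A uniform $W^{1,2}$ bound on $u_r$ follows from $\int_{B_1}|\nabla u_r|^2=rD(r)/H(r)$ together with Lemma~\ref{lem:H-D-est} and the identity $rH'(r)/H(r)=n-1+2rI(r)/H(r)$ from Lemma~\ref{lem:deriv} (which force $rD(r)/H(r)\to\kappa$). Standard $C^{1,\alpha}_{\loc}$ estimates (as in \cite{AC}) then extract, along a subsequence $r_j\to 0+$, a nontrivial limit $u_0$ satisfying \eqref{eq:signorini-1}--\eqref{eq:signorini-2} on $B_1$ with $\|u_0\|_{L^2(\partial B_1)}=1$.

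To identify $u_0$ as $\kappa$-homogeneous, set $N_1(s,v):=sD(s,v)/H(s,v)$ and observe the scaling identity $N_1(\rho,u_r)=N_1(r\rho,u)$. Since $u_0\in\S$ satisfies $\int_{B_\rho} u_0\,\Delta u_0=0$, the Almgren frequency $N(\rho,u_0)$ equals $N_1(\rho,u_0)=\lim_j N_1(r_j\rho,u)=\kappa$, so the last part of Theorem~\ref{thm:almgren} yields $\kappa$-homogeneity of $u_0$. Extending by homogeneity, $u_0$ becomes a nontrivial $\kappa$-homogeneous element of $\S$ on all of $\R^n$. If $0\in\Gamma(u_0)$, Lemma~\ref{lem:min-homogen} immediately delivers $\kappa=3/2$ or $\kappa\ge 2$. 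Otherwise, by homogeneity $u_0(\cdot,0)$ either vanishes identically on $\R^{n-1}$ or is strictly positive on $\R^{n-1}\setminus\{0\}$; in both subcases the complementary conditions together with the even symmetry in $x_n$ force $\partial_{x_n}u_0\equiv 0$ on the membrane (away from $0$), so $u_0$ extends by reflection to a $\kappa$-homogeneous function harmonic on $\R^n\setminus\{0\}$. Boundedness on $S^{n-1}$ makes $\kappa$ a nonnegative integer, and $u_0(0)=0$ together with nonnegativity on the membrane, nontriviality, and even symmetry rule out $\kappa\in\{0,1\}$, giving $\kappa\ge 2$.

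The main technical obstacle is simultaneously controlling the perturbative error $\Delta u\ne 0$ at the level of the blowup limit (so that $u_0$ lands in the zero-obstacle class $\S$) and at the level of its frequency (so that $u_0$ is \emph{exactly} $\kappa$-homogeneous, allowing invocation of the homogeneity statement in Theorem~\ref{thm:almgren}); both controls hinge on the strict separation $\kappa<k$ and the $L^2$ estimates of Lemma~\ref{lem:H-D-est}.
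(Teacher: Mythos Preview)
Your approach is genuinely different from the paper's and mostly sound, but there is a real gap in the final case analysis. The paper does not construct a blowup here at all: it simply quotes \cite{CSS} for the case $k=2$ and then invokes the consistency identity $\Phi_2(0+,u)=\min\{\Phi_k(0+,u),\,n+3\}$ from Lemma~\ref{lem:consist} to transfer the conclusion to arbitrary $k$. You instead rebuild (correctly) the blowup machinery that the paper develops only later in Lemma~\ref{lem:resc-unif-bound} and Proposition~\ref{prop:blowup-homogen-nonzero}, and then try to feed the limit $u_0$ into the zero-obstacle Lemma~\ref{lem:min-homogen}. That is a legitimate strategy, but Lemma~\ref{lem:min-homogen} requires $0\in\Gamma(u_0)$, and your treatment of the alternative $0\notin\Gamma(u_0)$ is where the argument breaks.

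Concretely: if $0\notin\Gamma(u_0)$ then (since $u_0(0)=0$) the origin is interior to $\Lambda(u_0)$ and homogeneity forces $u_0(\cdot,0)\equiv 0$; your second subcase ``strictly positive on $\R^{n-1}\setminus\{0\}$'' is actually already covered by $0\in\Gamma(u_0)$. In the genuine degenerate subcase $u_0(\cdot,0)\equiv 0$, your assertion that ``the complementary conditions together with the even symmetry force $\partial_{x_n}u_0\equiv 0$ on the membrane'' is false: the complementarity $u_0\,\partial_{x_n}u_0=0$ is vacuous when $u_0=0$, and even symmetry only gives $\partial_{x_n}u_0(x',0^-)=-\partial_{x_n}u_0(x',0^+)$, not that either side vanishes. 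The function $u_0(x)=-|x_n|$ is a nontrivial $1$-homogeneous, even-in-$x_n$ solution of \eqref{eq:signorini-1}--\eqref{eq:signorini-2} with $u_0(\cdot,0)\equiv 0$ and $\partial_{x_n}u_0\equiv -1$ on the membrane, so your argument as written does not exclude $\kappa=1$. The fix is to use $0\in\Gamma(u)$ and the $C^1_{\loc}$ convergence: if $\partial_{x_n}u_0<0$ on $\overline{B_{1/2}'}$ (as for $-c|x_n|$), then for large $j$ one has $\partial_{x_n}u_{r_j}<0$ on $B_{1/2}'$, whence by complementarity $u_{r_j}(\cdot,0)\equiv 0$ there, i.e.\ $u(\cdot,0)\equiv 0$ on $B_{r_j/2}'$, contradicting $0\in\Gamma(u)$. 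With this patch (and the odd-reflection argument giving $\kappa\in\N$ in the degenerate case, plus $\kappa=0$ excluded by nontriviality), your route goes through.
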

\begin{proof} For $k=2$, it has been proved in \cite{CSS} that either $\kappa=2-\frac12$ or
  $\kappa\geq 2$. The same statement is also true for all $k\geq 2$
  from the identity
  $$
  \Phi_2(0+,u)=\min\{\Phi_k(0+,u),2\},
  $$
  which is a particular case of Lemma~\ref{lem:consist}
  relating values of $\Phi_k(0+,u)$ for different $k$. The upper bound
  is also contained in Lemma~\ref{lem:consist}.
\end{proof}

\begin{lemma}[Growth near the free boundary]\label{lem:growth-est} Let $u\in\S_k(M)$ and suppose that
  $\Phi_k(0+,u)=n-1+2\kappa$ with $\kappa\leq k$. Then
\begin{align*}
  H(r)&=\int_{\partial B_r} u^2\leq C_M r^{n-1+2\kappa}\\
  G(r)&=\int_{B_r} u^2\leq C_M r^{n+2\kappa}\\
  D(r)&=\int_{B_r} |\nabla u|^2\leq C_M r^{n-2+2\kappa}
\end{align*}
for $0<r<1/2$.
\end{lemma}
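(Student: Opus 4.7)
The plan is to control $H$ via the generalized frequency formula of Theorem~\ref{thm:gen-almgren}, then obtain $G$ by straightforward integration, and finally extract $D$ via a Caccioppoli-type estimate. Throughout I abbreviate $A:=n-1+2\kappa$.

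\textbf{Step 1 (bound on $H$).} Set $\tilde H(r):=\max\{H(r),r^{n-1+2k}\}$. Because $\Phi_k(\cdot,u)$ is nondecreasing with $\Phi_k(0+,u)=A$, one has $\Phi_k(r,u)\geq A$ for every $r\in(0,r_M)$, so for a.e.\ such $r$,
$$
\frac{d}{dr}\log \tilde H(r)=\frac{\Phi_k(r,u)}{r(1+C_Mr)}\geq \frac{A}{r(1+C_Mr)}.
$$
Fix a reference radius $r_0\in(0,r_M)$; integrating from $r$ to $r_0$ and using the elementary bound $\int_r^{r_0}\frac{ds}{s(1+C_Ms)}\geq \log(r_0/r)-C_Mr_0$ gives
$$
\log\frac{\tilde H(r_0)}{\tilde H(r)}\geq A\log(r_0/r)-AC_Mr_0.
$$
Exponentiating and using that $\|u\|_{C^1(B_1)}\leq M$ implies $\tilde H(r_0)\leq C_M$, we obtain $H(r)\leq \tilde H(r)\leq C_M r^{n-1+2\kappa}$.

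\textbf{Step 2 (bound on $G$).} Lemma~\ref{lem:deriv} gives $G'(r)=H(r)$, so
$$
G(r)=\int_0^rH(s)\,ds\leq C_M\int_0^r s^{n-1+2\kappa}\,ds\leq C_M r^{n+2\kappa}.
$$

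\textbf{Step 3 (bound on $D$).} I employ a Caccioppoli-type argument. Pick $\eta\in C_c^\infty(B_{3r/2})$ with $\eta\equiv1$ on $B_r$ and $|\nabla\eta|\leq C/r$. Testing the distributional identity $\int\nabla u\cdot\nabla(\eta^2 u)=-\langle\Delta u,\eta^2u\rangle$ and expanding yields
$$
\int\eta^2|\nabla u|^2=-\langle\Delta u,\eta^2u\rangle-2\int\eta\,u\,\nabla u\cdot\nabla\eta.
$$
The singular part of $\Delta u$, concentrated on $B_1'$ and equal to $2(\partial_{x_n}u)\H^{n-1}|_{B_1'}$, contributes nothing because the Signorini complementarity forces $u\cdot\partial_{x_n}u=0$ on $B_1'$. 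On $B_1\setminus B_1'$ we have $|\Delta u|\leq M|x'|^{k-1}$, hence by Cauchy-Schwarz and Step~2,
$$
|\langle\Delta u,\eta^2u\rangle|\leq G(3r/2)^{1/2}\Bigl(M^2\int_{B_{3r/2}}|x'|^{2k-2}\,dx\Bigr)^{1/2}\leq C_M r^{n+\kappa+k-1}.
$$
Young's inequality on the gradient cross term absorbs $\tfrac12\int\eta^2|\nabla u|^2$ and leaves a remainder bounded by $Cr^{-2}G(3r/2)\leq C_M r^{n-2+2\kappa}$. Since $\kappa\leq k$ by Lemma~\ref{lem:min-max-homogen-nonzero}, the exponent satisfies $n+\kappa+k-1\geq n-2+2\kappa+1$, so the forcing term is of lower order for $r\leq 1$, and we conclude
$$
D(r)\leq \int\eta^2|\nabla u|^2\leq C_M r^{n-2+2\kappa}.
$$

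\textbf{Main obstacle.} The delicate point is Step~3, where one must verify both that the singular part of $\Delta u$ on $B_1'$ produces no contribution (via the Signorini complementarity $u\cdot\partial_{x_n}u=0$) and that the forcing term arising from $\Delta u=-\Delta_{x'}(\phi-Q_k)$ is strictly subdominant. The latter rests crucially on the upper bound $\kappa\leq k$ of Lemma~\ref{lem:min-max-homogen-nonzero}, which is precisely what connects the truncation parameter $k$ of $\Phi_k$ to the range of frequencies that can be effectively analyzed.
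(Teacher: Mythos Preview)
Your proof is correct and follows essentially the same route as the paper. In Step~1 the paper splits into maximal intervals where $H(r)>r^{n-1+2k}$ and integrates $(r+Cr^2)H'/H\geq n-1+2\kappa$ there, whereas you integrate the inequality for $\tilde H=\max\{H,r^{n-1+2k}\}$ directly; these are equivalent packagings of the same monotonicity input. Step~2 is identical, and in Step~3 the paper invokes the energy inequality $\int_{B_{r/2}}|\nabla u|^2\leq Cr^{-2}\int_{B_r^+}u^2+Cr^2\int_{B_r^+}(\Delta u)^2$ as a black box, while you spell out the Caccioppoli argument behind it---same estimate, same use of the Signorini complementarity to kill the boundary term, and same reliance on $\kappa\leq k$ to subordinate the forcing.
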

\begin{proof} We first prove the estimate for $H(r)$. The estimate is
  automatically satisfied for values $r$ such that $H(r)\leq
  r^{n-1+2k}$. Consider now a maximal open interval $(r_1,r_0)$ in
  $(0,1/2)$ where $H(r)>r^{n-1+2k}$. Then either $H(r_0)=r_0^{n-1+2k}$
  or $r_0=1/2$. In both cases $H(r_0)\leq M r_0^{n-1+2k}$.

  Further, we have
  $$
  (r+Cr^2)\frac{H'(r)}{H(r)}\geq n-1+2\kappa,\quad r_1<r<r_0.
  $$
  Dividing both sides by $r+Cr^2$ and integrating from $r$ to
  $r_0$, we obtain
\begin{align*}
  \log \frac{H(r_0)}{H(r)}&\geq (n-1+2\kappa)\int_{r}^{r_0}
  \frac{ds}{s(1+Cs)}\\
  &=(n-1+2\kappa)\log \frac{r_0/(1+Cr_0)}{r/(1+Cr)}.
\end{align*}
Exponentiation gives
$$
H(r)\leq C_M r^{n-1+2\kappa},\quad r_1<r<r_0.
$$
This proves the growth estimate for $H(r)$ and, after integration,
for $G(r)$.

To estimate $D(r)$ we note that $u$ satisfies the energy inequality
$$
\int_{B_{r/2}} |\nabla u|^2\leq \frac{C}{r^2}\int_{B_r^+} u^2+C
r^2\int_{B_r^+}(\Delta u)^2,
$$
which is proved exactly as the standard energy inequality in the
full ball $B_r$ (the boundary term on $B'_r$ vanishes since
$u\,\partial_{x_n} u=0$ there).  This gives
$$
D(r/2)\leq C r^{n+2\kappa-2}+C r^{n+2k}\leq C r^{n+2\kappa-2}.
$$
\end{proof}

\subsection{Optimal regularity}
Combining Lemmas~\ref{lem:min-max-homogen-nonzero} and
\ref{lem:growth-est} with the results of Caffarelli, Salsa, and
Silvestre \cite{CSS}, we obtain the following information about the
optimal (slowest possible) growth near the origin for a function
$u\in\S_k$.

\begin{proposition}[Optimal growth]\label{prop:optimal-growth} Let $u\in\S_k$, then
  $$
  \sup_{B_r} |u|\leq C\, r^{3/2}.\qed
  $$
\end{proposition}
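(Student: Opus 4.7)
The statement is obtained by combining the two preceding lemmas with a standard $L^2$-to-$L^\infty$ upgrade. First I apply Lemma~\ref{lem:min-max-homogen-nonzero} at the free boundary point $0$: writing $\Phi_k(0+,u)=n-1+2\kappa$, the dichotomy forces $\kappa \geq 2-\frac12 = \frac32$, with $\kappa\leq k$ in all cases (so implicitly $k\geq 2$). This is the crucial lower bound on the frequency at the origin.

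Next I feed this $\kappa$ into Lemma~\ref{lem:growth-est} to obtain the $L^2$-growth
\[
  H(r)=\int_{\partial B_r} u^2 \;\leq\; C_M r^{n-1+2\kappa}\;\leq\; C_M r^{n+2},
  \qquad 0<r<\tfrac12,
\]
and analogously $G(r)\leq C_M r^{n+3}$ on the solid $L^2$-norm. It remains to pass from these $L^2$-estimates to a pointwise bound of order $r^{3/2}$.

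For this last step I would use that, on the symmetric extension of $u$ to $B_1$, the distributional Laplacian $\Delta u$ splits as an $L^\infty$ function of size $|\Delta u|\leq M|x|^{k-1}$ plus a nonpositive measure concentrated on $B'_1$; the latter is favorable for the subharmonicity of $u^+$ and $u^-$. The former is absorbed via a Newtonian correction: decomposing $u=h+w$ in $B_r$ with $h$ harmonic and $w$ a Newtonian potential of $\Delta u$, the correction satisfies $\|w\|_{L^\infty(B_r)}\leq Cr^{k+1}$. Combining the mean-value estimate for $h$ with this correction, exactly as in the zero-obstacle argument based on \cite{AC}*{Lemma~1}, yields
\[
  \sup_{B_{r/2}}|u| \;\leq\; C\,r^{-n/2}\|u\|_{L^2(B_r)} + Cr^{k+1}
  \;\leq\; Cr^{\kappa}+Cr^{k+1} \;\leq\; Cr^{3/2},
\]
since $\kappa\geq \frac32$ and $k+1\geq 3$. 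Equivalently, one may directly invoke the optimal $C^{1,\frac12}$-regularity of \cite{CSS} for the underlying Signorini minimizer $v\in\S^\phi$ and transfer the $\frac32$-growth through $u=v-\tilde Q_k-(\phi-Q_k)$, using that $u(0)=|\nabla u(0)|=0$ at the free boundary point and that the subtracted terms are smooth with the correct vanishing order.

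The main obstacle is this final $L^2$-to-$L^\infty$ upgrade, since $u$ is neither harmonic nor subharmonic. The two sources of failure have to be handled in tandem: the $|x|^{k-1}$ inhomogeneity, which is subcritical (compared to the target $r^{3/2}$) precisely because $k\geq 2$, and the Signorini measure on $B'_1$, which carries the correct sign for the subharmonicity of $u^{\pm}$ and therefore poses no further difficulty.
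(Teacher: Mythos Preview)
Your proposal is correct and follows essentially the same route as the paper. The paper's ``proof'' is just the sentence preceding the proposition: it combines Lemma~\ref{lem:min-max-homogen-nonzero} (giving $\kappa\ge 3/2$) with Lemma~\ref{lem:growth-est} (giving the $L^2$ growth) and then defers the $L^2$-to-$L^\infty$ upgrade to \cite{CSS}; you spell out that last step via the Newtonian correction for the $|x|^{k-1}$ inhomogeneity together with the sign of the Signorini measure, and also note the shortcut through the $C^{1,1/2}$ regularity of $v$ in \cite{CSS}, which is exactly what the paper is invoking.
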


Proposition \ref{prop:optimal-growth} in turn leads to the optimal
regularity of the solutions of the Signorini problem
\eqref{eq:signorini-1-nonzero}--\eqref{eq:0-fbp-nonzero}.

\begin{theorem}[Optimal regularity] Let $v\in\S^\phi$ with
  $\phi\in C^{2,1}(B_1)$, then $v\in C^{1,\frac12}_\loc(B_1^\pm\cup B_1')$.\qed
\end{theorem}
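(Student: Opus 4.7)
The plan is to reduce to the zero-obstacle case via the normalization procedure of Section~\ref{sec:normalization-1} and then invoke Proposition~\ref{prop:optimal-growth} at every free boundary point, upgrading the pointwise growth into a uniform $C^{1,1/2}$ bound by a standard rescaling argument.

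First I would localize. Away from $\Gamma(v)$, inside $B_1'$ we have either $v>\phi$ (where $v$ satisfies the Neumann condition $\partial_{x_n} v=0$, which together with $\Delta v=0$ in $B_1^+$ and $\phi\in C^{2,1}$ gives interior $C^{\infty}$ regularity of the reflected $v$) or $v=\phi$ (where $v$ solves a Dirichlet problem with smooth data on a flat boundary). In both cases $v\in C^{1,1/2}_{\loc}$ trivially, so the only points at which the regularity requires work are the free boundary points $x_0\in\Gamma(v)\cap B_{1/2}$.

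Fix such an $x_0$. After translating $x_0$ to the origin and rescaling, let $Q_2$ be the Taylor polynomial of $\phi$ of degree $2$ at the origin and $\tilde Q_2$ its even harmonic extension to $\R^n$, furnished by Lemma~\ref{lem:polynom-ext}. Setting
\[
u(x):=v(x)-\tilde Q_2(x)-(\phi(x')-Q_2(x')),
\]
the coincidence sets coincide and $u$ belongs to the class $\S_2(M)$ for $M$ depending on $\|v\|_{C^1}$ and $\|\phi\|_{C^{2,1}}$, as shown in Section~\ref{sec:normalization-1}. Proposition~\ref{prop:optimal-growth} then yields the pointwise bound $\sup_{B_r}|u|\leq C r^{3/2}$ at $0$, with a constant $C$ that is uniform as $x_0$ varies over compact subsets of $\Gamma(v)$.

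To upgrade this to a $C^{1,1/2}$ estimate I would apply the standard scaling/Campanato-type argument. For any $y$ close to the free boundary, let $d=\mathrm{dist}(y,\Gamma(u))$; interior $C^{1,\alpha}$ estimates for the Signorini problem (Caffarelli~\cite{Ca} combined with the $L^\infty$ bound on $\Delta u$) give
\[
|\nabla u(y)-\nabla u(y_0)|\leq C d^{-1-\alpha}\|u-\ell\|_{L^\infty(B_d(y_0))}
\]
for a suitable affine $\ell$, where $y_0\in\Gamma(u)$ realizes the distance $d$. Inserting the growth estimate $\|u\|_{L^\infty(B_r(y_0))}\leq Cr^{3/2}$ and iterating on dyadic annuli around $\Gamma(u)$ yields the uniform Hölder seminorm $[\nabla u]_{C^{1/2}(B_{1/2}^+\cup B_{1/2}')}\leq C$. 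The only place that the exponent $1/2$ is saturated is the decay produced by Proposition~\ref{prop:optimal-growth}; elsewhere the estimate is dimensional.

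Finally, since $\tilde Q_2$ is a polynomial and $\phi(x')-Q_2(x')\in C^{2,1}(B_1')$ vanishes to order $3$ at the origin, the difference $v-u=\tilde Q_2+(\phi-Q_2)$ is of class $C^{2,1}$, hence in $C^{1,1/2}$, on $B_1^+\cup B_1'$, and $v$ inherits the $C^{1,1/2}$ regularity of $u$ at $x_0$. Patching the estimates over a compact set of free boundary points and combining with the easy regularity away from $\Gamma(v)$ gives $v\in C^{1,1/2}_{\loc}(B_1^\pm\cup B_1')$.

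The main obstacle is the last upgrade: Proposition~\ref{prop:optimal-growth} only provides a pointwise growth of $u$ at a single free boundary point, whereas the conclusion demands a uniform modulus of continuity for $\nabla u$. This is handled by the Campanato-type scaling argument sketched above, but its correct execution requires that the constants from the $C^{1,\alpha}$ interior estimates and from Proposition~\ref{prop:optimal-growth} depend only on $\|v\|_{C^1}$ and $\|\phi\|_{C^{2,1}}$ and not on the particular free boundary point under consideration.
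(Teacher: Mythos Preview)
Your proposal is correct and follows the approach the paper intends: the paper itself gives no proof here (the statement ends with \qed), treating the result as an immediate consequence of Proposition~\ref{prop:optimal-growth} together with the results in \cite{CSS}. What you have written is precisely the standard way to fill in those details---normalize via Section~\ref{sec:normalization-1}, invoke the $r^{3/2}$ growth at each free boundary point, and upgrade to a uniform $C^{1,1/2}$ bound by the scaling/Campanato argument---so there is no divergence in strategy, only in the level of detail supplied.
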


\section{Blowups}
\label{sec:blowups}

The generalized frequency formula in Theorem~\ref{thm:gen-almgren}
above allows to study the blowups. However, the situation is much
subtler than in the case of the zero obstacle. For $u\in\S_k$ and
$r>0$ we consider the rescalings $u_r$ introduced in
\eqref{eq:rescaling}. If it happens that $H(r)$ converges to $0$
faster than $r^{n-1+2k}$, then $\Phi_k(r,u)$ simply equals $(1+C_M
r)(n-1+2k)$ for small $r>0$, which does not help to control the
Dirichlet integral of $u_r$ on $B_1$. Thus, we don't know if the
blowups exist in that case. On the other hand,
the functional $\Phi_k(r,u)$ does contain enough
information to establish the uniform estimates for $u_r$ if
$\Phi_k(0+,u)< n-1+2k$.

\begin{lemma}[Uniform bounds of rescalings]\label{lem:resc-unif-bound}
  Let $u\in \S_k$ and suppose that $\Phi_k(0+,u)=n-1+2\kappa$ with
  $\kappa<k$. Then there exists a sufficiently small $r_0(u)>0$ such that the family $\{u_r\}_{0<r<r_0(u)}$ is uniformly bounded in
  $W^{1,2}(B_1)\cap C^1_\loc(B_1)$.
\end{lemma}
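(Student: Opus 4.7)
The plan is to translate the hypothesis $\kappa<k$ into a lower bound $H(r)\gtrsim r^{n-1+2k}$ that is valid on an initial interval $(0,r_0(u))$, then read the uniform $W^{1,2}$ bound directly off the monotonicity of $\Phi_k$, and finally invoke the Signorini $C^{1,\alpha}$ estimate for the rescalings.

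First I would establish the lower bound $H(r)>r^{n-1+2k}$ for all small $r$. This is the observation already used at the start of the proof of Lemma~\ref{lem:consist}: if $H(r_j)\le r_j^{n-1+2k}$ along a sequence $r_j\to 0+$, then at each such $r_j$ the function $\max\{H(r),r^{n-1+2k}\}$ equals $r^{n-1+2k}$, forcing $\Phi_k(r_j,u)=(n-1+2k)(1+C_Mr_j)$ and therefore $\Phi_k(0+,u)=n-1+2k$, contradicting $\kappa<k$. Consequently there exists $r_0=r_0(u)>0$ such that
\[
H(r)>r^{n-1+2k}\quad\text{for all }0<r<r_0.
\]

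On this interval the generalized frequency reduces to the classical form
\[
\Phi_k(r,u)=(r+C_Mr^2)\frac{H'(r)}{H(r)}=(1+C_Mr)\bigl(n-1+2N(r,u)\bigr),
\]
by the second identity of Lemma~\ref{lem:deriv} and \eqref{eq:D-int-parts}. Since $\Phi_k(\cdot,u)$ is monotone nondecreasing on $(0,r_M)$ by Theorem~\ref{thm:gen-almgren}, it is in particular bounded on $(0,r_0)$ by $\Phi_k(r_0,u)$. Hence $N(r,u)\le C(u)$ uniformly for $0<r<r_0$. A direct change of variable in the definition \eqref{eq:rescaling} yields
\[
\int_{B_1}|\nabla u_r|^2=\frac{rD(r)}{H(r)}=N(r,u)\le C(u),
\]
which together with the normalization $\|u_r\|_{L^2(\partial B_1)}=1$ (equation \eqref{eq:one}) gives a uniform bound for $u_r$ in $W^{1,2}(B_1)$ via the standard trace/Poincaré estimate.

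For the $C^1_{\loc}(B_1)$ estimate I would verify that each $u_r$ solves a Signorini problem whose data are uniformly controlled. The rescalings automatically satisfy the complementary conditions \eqref{eq:signorini-2-diff} on $B_1'$ and are even in $x_n$. The only genuine issue is the Laplacian: with $h_r:=(r^{1-n}H(r))^{1/2}$, the lower bound from Step 1 gives $h_r\ge r^{k}$, so that from \eqref{eq:signorini-1-diff}
\[
|\Delta u_r(x)|=\frac{r^2}{h_r}|\Delta u(rx)|\le \frac{Mr^{k+1}}{h_r}|x'|^{k-1}\le Mr\,|x'|^{k-1}
\]
in $B_1^+$. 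Thus the family $\{u_r\}_{0<r<r_0}$ consists of Signorini solutions whose right-hand sides are uniformly bounded in $L^\infty$ and whose $W^{1,2}$ norms are controlled by the previous paragraph. The interior $C^{1,\alpha}$ theory for the thin obstacle problem used in the proof of Theorem~\ref{thm:classification-fbp} (which is insensitive to a bounded inhomogeneity) then yields a uniform estimate for $u_r$ in $C^{1,\alpha}_{\loc}(B_1^+\cup B_1')$, and by even reflection in $C^1_{\loc}(B_1)$.

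The only conceptually nontrivial point is the first step: extracting the lower bound $H(r)>r^{n-1+2k}$ from the strict inequality $\kappa<k$. Once this is in hand, everything else is a bookkeeping exercise using Theorem~\ref{thm:gen-almgren} and the regularity theory recalled in Section~\ref{sec:known-results}.
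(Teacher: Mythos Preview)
Your first and last steps are correct and match the paper: the lower bound $H(r)>r^{n-1+2k}$ for small $r$ is obtained exactly as you say, and the $C^1_{\loc}$ bound via $|\Delta u_r(x)|\le Mr\,|x'|^{k-1}$ is the paper's argument verbatim.

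The gap is in the middle step. You invoke \eqref{eq:D-int-parts} to get $\Phi_k(r,u)=(1+C_Mr)(n-1+2N(r,u))$, but \eqref{eq:D-int-parts} is the Part~\ref{part:zero-thin-obstacle} identity that uses $u\Delta u=0$; for $u\in\S_k$ one has $|\Delta u|\le M|x'|^{k-1}$ in $B_1^+$ and the correct identity is \eqref{eq:GI}, namely $\int_{\partial B_r}uu_\nu=D(r)+\int_{B_r}u\Delta u$. Consequently
\[
r\,\frac{H'(r)}{H(r)}=n-1+2N(r,u)+\frac{2r\int_{B_r}u\Delta u}{H(r)},
\]
and the monotonicity of $\Phi_k$ only gives you a bound on $N(r,u)$ \emph{plus} this error term. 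The heuristic displayed at the top of Section~\ref{sec:growth-near-free} (which you may have had in mind) is not an exact identity.

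The paper's proof fills exactly this gap: it bounds $\bigl|\int_{B_r}u\Delta u\bigr|\le C\,G(r)^{1/2}r^{n/2+k-1}\le C\,r^{n-1+\kappa+k}$ using Cauchy--Schwarz and Lemma~\ref{lem:growth-est}, and then divides by the \emph{lower} bound $H(r)>c\,r^{n-1+2\kappa'}$ with $\kappa<\kappa'<(\kappa+k)/2$ (obtained as in Lemma~\ref{lem:consist}) to conclude that the error $r\int_{B_r}u\Delta u/H(r)$ is bounded. Once you insert this estimate, your argument becomes the paper's.
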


\begin{proof} We may assume that $H(r)>r^{n-1+2k}$ for $0<r<r_0$ and
  therefore the inequality $\Phi_k(r,u)\leq \Phi_k(r_0,u)$ will imply
  that
  $$
  r\,\frac{H'(r)}{H(r)}\leq C,\qquad 0<r<r_0.
  $$
  Using the formula for $H'(r)$ in Lemma~\ref{lem:deriv}, we have
  $$
  (n-1)+2r\,\frac{\int_{B_r}|\nabla u|^2+\int_{B_r} u\Delta
    u}{\int_{\partial B_r} u^2}\leq C,\qquad 0<r<r_0.
  $$
  From this inequality we obtain for the rescalings $u_r$
  $$
  \int_{B_1} |\nabla u_r|^2\leq C-r\frac{\int_{B_r} u\Delta
    u}{\int_{\partial B_r} u^2},\qquad 0<r<r_0.
  $$
  The second term in the right hand side can be controlled as
  follows.
\begin{align*}
  \left|\int_{B_r} u\Delta u\right|&\leq \left(\int_{B_r}
    u^2\right)^{1/2}\left(\int_{B_r\setminus B_r'}|\Delta u|^2
  \right)^{1/2}\leq C\,G(r)^{\frac12} r^{\frac{n}2+k-1}\\
  &\leq C r^{\frac{n}2+\kappa} r^{\frac{n}2+k-1}=C r^{n-1+\kappa+k}
\end{align*}
On the other hand, for any $\kappa'>\kappa$, we have that for
sufficiently small $r$
$$
\int_{\partial B_r} u^2> c\,r^{n-1+2\kappa'},
$$
for some $c>0$, see the arguments in the proof of
Lemma~\ref{lem:consist}. This implies that
$$
\left|\frac{\int_{B_r} u\Delta u}{\int_{\partial B_r}
    u^2}\right|\leq C r^{\kappa+k-2\kappa'}\leq C,
$$
provided we choose
$$
\kappa<\kappa'<\frac{\kappa+k}2.
$$
We thus obtain that
$$
\int_{B_1} |\nabla u_r|^2\leq C,\qquad 0<r<r_0.
$$
Together with
$$
\int_{\partial B_1} u_r^2=1,
$$
this gives the uniform boundedness of $\{u_r\}$ in $W^{1,2}(B_1)$.

Next, to see the boundedness in $C^1_\loc$, notice that
$$
|\Delta u_r(x)|=|\Delta u(rx)|\,
\frac{r^2r^{\frac{n-1}2}}{H(r)^{\frac12}}\leq M |x'|^{k-1}\frac{r^{k+1}r^{\frac{n-1}2}}{H(r)^{\frac12}}\leq
M\, r|x'|^{k-1}\quad\text{in } B_1\setminus B_1'
$$
if one uses the bound $H(r)>r^{n-1+2k}$. Thus, we obtain that $u_r$
is bounded in $C^{1,\alpha}_\loc(B_1^\pm\cup B_1')$, see e.g.\ \cite{Ca}.
\end{proof}

\begin{remark}
Although the extremal case $\kappa=k$ is not covered by
Lemma~\ref{lem:resc-unif-bound}, it should be considered as a
natural limitation that comes from having assumed that the thin
obstacle is in the class $C^{k,1}$. Indeed, if more regularity of
the thin obstacle were assumed, then the consistency
Lemma~\ref{lem:consist} would allow to study the blowups in the case
$\kappa=k$ as well.
\end{remark}

At this point, using Lemma~\ref{lem:resc-unif-bound} we see that,
under its assumptions, there exists a subsequence $r_j\to 0+$ such
that
\begin{equation}\label{eq:urj-u0-nonzero}
\begin{aligned}
  u_{r_j}\to u_0 &\quad\text{in }W^{1,2}(B_1)\\
  u_{r_j}\to u_0 &\quad\text{in }L^2(\partial B_1)\\
  u_{r_j}\to u_0 &\quad\text{in }C^1_\loc(B^\pm_1\cup B_1').
\end{aligned}
\end{equation}
We call such $u_0$ a \emph{blowup} of $u$ at the origin.

\begin{proposition}[Homogeneity of blowups]\label{prop:blowup-homogen-nonzero} Let $u\in\S_k$ and $\Phi_k(0+,u)=n-1+2\kappa$ with
  $\kappa<k$. Then every blowup $u_0$ is homogeneous of
  degree $\kappa$, $u_0\not \equiv 0$,  and satisfies \textup{\eqref{eq:signorini-1}--\eqref{eq:signorini-2}} \textup{(}i.e.\ $u_0$ solves the
  Signorini problem with zero obstacle\textup{)}.
\end{proposition}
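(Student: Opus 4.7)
The plan is to mirror the proof of Proposition~\ref{prop:blowup-homogen} from Part~\ref{part:zero-thin-obstacle}, but with two new ingredients: (i) one must show that the non-harmonicity of $u$ is washed out in the blowup, so that $u_0$ actually satisfies the \emph{zero}-obstacle Signorini problem; and (ii) one must extract the homogeneity of $u_0$ from the generalized frequency $\Phi_k$ rather than from Almgren's $N$ directly. Throughout, I will set $h_r = H(r)^{1/2} r^{-(n-1)/2}$ so that $u_r(x) = u(rx)/h_r$.

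\textbf{Step 1 (The blowup solves zero-obstacle Signorini).} The complementary conditions on $B_1'$ pass to the limit via the $C^1_\loc(B_1^\pm \cup B_1')$ convergence in \eqref{eq:urj-u0-nonzero}. For harmonicity off $B_1'$, I would use $\Delta u_r(x) = (r^2/h_r)\,\Delta u(rx)$ together with $|\Delta u(rx)| \le M|rx'|^{k-1}$. Since $\Phi_k(0+, u) = n-1+2\kappa < n-1+2k$ forces $H(r) > r^{n-1+2k}$ for all small $r$ (else $\Phi_k(0+,u)$ would equal $n-1+2k$), we get $|\Delta u_r(x)| \le M r^{k+1}|x'|^{k-1}/h_r \le M r |x'|^{k-1}$, so $\Delta u_{r_j} \to 0$ uniformly on compacta in $B_1 \setminus B_1'$. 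Combined with the $C^1_\loc$ convergence, $u_0$ is harmonic off $B_1'$ and satisfies the Signorini complementary conditions there, hence $u_0 \in \S$.

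\textbf{Step 2 (Non-triviality).} Strong $L^2(\partial B_1)$ convergence of $u_{r_j}$, together with the normalization $\int_{\partial B_1} u_{r_j}^2 = 1$ from the definition of the rescalings, yields $\int_{\partial B_1} u_0^2 = 1$, so $u_0 \not\equiv 0$.

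\textbf{Step 3 (Existence of Almgren's frequency for $u$ at $0$).} The key intermediate step is to prove that the classical frequency $N(0+, u)$ exists and equals $\kappa$. Using $H(r) > r^{n-1+2k}$, Theorem~\ref{thm:gen-almgren} and Lemma~\ref{lem:deriv} give
\[
\Phi_k(r,u) = (1 + C_M r)\bigl(n-1 + 2r\,I(r)/H(r)\bigr),
\]
with $I(r) = D(r) + \int_{B_r} u\,\Delta u$. Using Lemma~\ref{lem:growth-est} (applicable since $\kappa \le k$) to bound $G(r)\le C r^{n+2\kappa}$, and using $H(r) \ge c\,r^{n-1+2\kappa'}$ for any $\kappa' \in (\kappa, (\kappa+k)/2)$ (a standard consequence of $\Phi_k$ monotonicity, as in the proof of Lemma~\ref{lem:consist}), Cauchy--Schwarz yields $|r \int_{B_r} u\,\Delta u/H(r)| = O(r^{1+\kappa+k-2\kappa'}) = o(1)$. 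Therefore $r\,I(r)/H(r) = N(r,u) + o(1)$, so $N(0+, u) = \kappa$.

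\textbf{Step 4 (Homogeneity of $u_0$).} A direct change-of-variable computation (which uses only the definitions of $H$ and $D$, not the equation) shows that the scaling identity
\[
N(\rho, u_r) = N(r\rho, u), \qquad 0 < r,\rho < 1,
\]
persists in the nonzero-obstacle setting. Sending $r = r_j \to 0+$ and using strong $W^{1,2}(B_1)$ and $L^2(\partial B_\rho)$ convergence of $u_{r_j}$ to $u_0$, one gets $N(\rho, u_0) = \lim_j N(r_j \rho, u) = N(0+, u) = \kappa$ for every $\rho \in (0,1)$. Since $u_0 \in \S$, the equality case of Theorem~\ref{thm:almgren} then forces $u_0$ to be homogeneous of degree $\kappa$ in $B_1$.

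The main obstacle is Step~3: carefully handling the error term coming from the non-harmonicity of $u$ to ensure that $\Phi_k(0+, u)$ faithfully encodes Almgren's frequency. All of this hinges crucially on the assumption $\kappa < k$; the borderline case $\kappa = k$ is intrinsically outside the scope of these estimates, as already noted in the remark following Lemma~\ref{lem:resc-unif-bound}.
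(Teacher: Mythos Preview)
Your proof is correct and follows essentially the same route as the paper's: establish $u_0\in\S$ via the estimate $|\Delta u_r(x)|\le Mr|x'|^{k-1}$, obtain non-triviality from the $L^2(\partial B_1)$ normalization, reduce $\Phi_k(0+,u)=n-1+2\kappa$ to $N(0+,u)=\kappa$ by controlling the error $r\int_{B_r}u\,\Delta u/H(r)$, and then use the scaling identity $N(\rho,u_r)=N(r\rho,u)$ together with the constancy case of Theorem~\ref{thm:almgren}. Your Step~3 is spelled out in more detail than the paper's (which simply says ``arguing as in the proof of Lemma~\ref{lem:resc-unif-bound}, this relation can be reduced to $N(0+,u)=\kappa$''), but the substance is identical.
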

\begin{proof} First notice that $u_0$ satisfies
\eqref{eq:signorini-1}--\eqref{eq:signorini-2}. Indeed, this follows
from from the $C^{1}_\loc$ convergence of $u_{r_j}\to u_0$ on $B_1^\pm\cup B_1'$ and
the estimate
$$
  |\Delta u_r (x)|\leq  M r|x'|^{k-1},\quad\text{in }B_1\setminus
  B_1'
  $$
that we established in the proof of Lemma~\ref{lem:resc-unif-bound}.

Next, since $\kappa<k$, the fact that
  $\Phi_k(0+,u)=n-1+2\kappa$ is equivalent to
  $$
  \lim_{r\to 0+} r\, \frac{H'(r)}{H(r)}= n-1+2\kappa.
  $$
  Arguing as in the proof of Lemma~\ref{lem:resc-unif-bound}, this
  relation
  can be reduced to
  $$
  \lim_{r\to 0+}\frac{r\int_{B_r}|\nabla u|^2}{\int_{\partial B_r}
    u^2}=\kappa,
  $$
  or in other words
  $$
  N(0+,u)=\kappa.
  $$
  But then we obtain for any $0<\rho<1$
  $$
  N(\rho, u_0)=\lim_{r_j\to 0+} N(\rho, u_{r_j})=\lim_{r_j\to 0+}
  N(\rho r_j, u)=\kappa.
  $$
This implies that $u_0$ is homogeneous of degree $\kappa$. Finally,
$u_0$ does not vanish identically since the convergence $u_{r_j}\to u_0$ in
$L^2(\partial B_1)$ and the equality $\int_{\partial B_1} u_{r_j}^2=1$
imply $\int_{\partial B_1} u_{0}^2=1$.
\end{proof}

\section{The free boundary}
\label{sec:free-boundary}

Suppose now we have a solution $v$ of the Signorini problem
\eqref{eq:signorini-1-nonzero}--\eqref{eq:0-fbp-nonzero} with
$\phi\in C^{k,1}(B')$. If $Q_k(x')$ and $\tilde Q_k(x)$ are the
$k$-th Taylor polynomial of $\phi$ and its symmetric harmonic
extension to $\R^n$, then
\begin{equation}\label{eq:uk}
u_k(x):=v(x)-\tilde Q_k(x)-(\phi(x')-Q_k(x'))\in \S_k.
\end{equation}
More generally, for $x_0\in\Gamma(v)$ we define
\begin{equation}\label{eq:ukx0}
u_k^{x_0}(x):=v(x+x_0)-\tilde Q_k^{x_0}(x)-(\phi(x'+x_0)-Q_k^{x_0}(x'))
\end{equation}
where $Q_k^{x_0}$ is the $k$-th Taylor polynomial of
$\phi(\cdot+x_0)$. The functions $u_{k}^{x_0}$ will satisfy the
conditions \eqref{eq:signorini-1-diff}--\eqref{eq:0-fbp-diff} but only
in a smaller ball $B_{1-|x_0|}$ instead of the full ball $B_1$. So
technically speaking $u_k^{x_0}$ are not in $\S_k$, however, most of
the time the results for class $\S_k$ can be used with insignificant
or no modification for functions $u_k^{x_0}$. In particular,
$\Phi_k(r, u_k^{x_0})$ will be monotone for $0<r<1-|x_0|$ and we can
use the value $\Phi_k(0+,u_k)$ to classify the free boundary points.

\begin{definition}\label{def:fb-class-nonzero} For $v\in\S^\phi$ we
  say that $0\in\Gamma^{(k)}_\kappa(v)$ for $2-\frac12\leq \kappa\leq
  k$ if and only if $\Phi_k(0+,u_k)=n-1+2\kappa$. More generally, we define
  $$
  \Gamma^{(k)}_\kappa(v):=\{x_0\in\Gamma(v) \mid
  \Phi_k(0+,u_k^{x_0})=n-1+2\kappa\}.
  $$
\end{definition}


\medskip
The next lemma shows how this classification of points changes with
$k$.

\begin{lemma}\label{lem:refinement-fb-class} Let $v\in\S^\phi$ with
  $\phi\in C^{k,1}(B_1')$. Then for any $2\leq m\leq k$ one has
\begin{align*}
  \Gamma^{(m)}_\kappa
  (v)&=\Gamma^{(k)}_\kappa(v)\quad\text{for }\kappa<m\\
  \Gamma^{(m)}_m(v)&=\bigcup_{\kappa\geq m}\Gamma^{(k)}_\kappa(v).
\end{align*}
\end{lemma}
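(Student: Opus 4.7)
The plan is to reduce the lemma to the single ``master identity''
\[
\Phi_m(0+,u_m^{x_0}) \;=\; \min\bigl\{\Phi_k(0+,u_k^{x_0}),\, n-1+2m\bigr\}.
\]
Granting this, both claims read off at once: for $\kappa<m$ the right-hand side equals $n-1+2\kappa$ exactly when $\Phi_k(0+,u_k^{x_0})=n-1+2\kappa$, which gives $\Gamma^{(m)}_\kappa(v)=\Gamma^{(k)}_\kappa(v)$; and it equals $n-1+2m$ exactly when $\Phi_k(0+,u_k^{x_0})\geq n-1+2m$, which gives $\Gamma^{(m)}_m(v)=\bigcup_{\kappa\geq m}\Gamma^{(k)}_\kappa(v)$.

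After translating to $x_0=0$, write $w:=u_m-u_k=(\tilde Q_k-\tilde Q_m)-(Q_k-Q_m)$. Since $\tilde Q_j(x',0)=Q_j(x')$, the polynomial $w$ vanishes on $\{x_n=0\}$; its Taylor expansion at the origin starts at degree $m+1$, so $|w(x)|\leq C|x|^{m+1}$ and $|\nabla w(x)|\leq C|x|^{m}$ for $|x|\leq 1$. The key first observation is that $u_k\in\S_k\subset\S_m$, because $|\Delta u_k|\leq M|x'|^{k-1}\leq M|x'|^{m-1}$ in $B_1^+$. Applying the consistency Lemma~\ref{lem:consist} to $u_k$ then yields
\[
\Phi_m(0+,u_k)=\min\bigl\{\Phi_k(0+,u_k),\,n-1+2m\bigr\},
\]
so the master identity reduces to establishing the equality $\Phi_m(0+,u_m)=\Phi_m(0+,u_k)$.

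To prove this, I would use blowup analysis. Both quantities are $\leq n-1+2m$ by Lemma~\ref{lem:min-max-homogen-nonzero}, so it suffices to show that if one of them equals $n-1+2\mu$ with $\mu<m$, the other does too. Suppose $\Phi_m(0+,u_m)=n-1+2\mu$ with $\mu<m$. By Lemma~\ref{lem:resc-unif-bound} there is a subsequence $r_j\to 0+$ along which $(u_m)_{r_j}\to U_0$ in the senses of \eqref{eq:urj-u0-nonzero}, where $U_0\not\equiv 0$ is $\mu$-homogeneous and solves the zero-obstacle Signorini problem (Proposition~\ref{prop:blowup-homogen-nonzero}). Monotonicity of $\Phi_m(\cdot,u_m)$ together with its limit give the two-sided bound $c\,r^{n-1+2\mu+\delta}\leq H_m(r)\leq C\,r^{n-1+2\mu}$ for arbitrarily small $\delta>0$. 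Combined with $\|w(r\cdot)\|_{L^2(\partial B_1)}\leq Cr^{m+1}$ and $m+1>\mu$, this forces $H_k(r)/H_m(r)\to 1$; in particular $H_k(r)>r^{n-1+2m}$ for small $r$ and $(u_k)_{r_j}\to U_0$ along the same subsequence. The scaling identity $N(r_j,u_k)=N(1,(u_k)_{r_j})\to N(1,U_0)=\mu$, combined with the error estimate $r\int_{B_r}u_k\Delta u_k/H_k(r)=o(1)$ (via $|\Delta u_k|\leq M|x'|^{k-1}$ and the lower bound on $H_k$), gives $\Phi_m(r_j,u_k)\to n-1+2\mu$. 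Monotonicity of $\Phi_m(\cdot,u_k)$ then forces $\Phi_m(0+,u_k)=n-1+2\mu$; the reverse implication is symmetric since $u_k=u_m-w$ has a polynomial correction of the same type.

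The hard part is precisely this blowup comparison: verifying that the polynomial error $w$, which genuinely perturbs the PDE off the thin space, contributes negligibly to the rescaled energies at the critical scale $r^\mu$. Both the two-sided asymptotics $H_m(r)\sim r^{n-1+2\mu}$ from monotonicity of $\Phi_m$ and the vanishing of $w$ to order $m+1>\mu$ are essential here. With this in hand, the master identity follows from the consistency lemma applied to $u_k$, and the two claimed equalities read off.
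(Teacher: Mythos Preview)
Your overall strategy coincides with the paper's: reduce the lemma to the master identity
\[
\Phi_m(0+,u_m^{x_0})=\min\{\Phi_k(0+,u_k^{x_0}),\,n-1+2m\},
\]
observe that $u_k\in\S_k\subset\S_m$, apply the consistency Lemma~\ref{lem:consist} to $u_k$ to obtain $\Phi_m(0+,u_k)=\min\{\Phi_k(0+,u_k),\,n-1+2m\}$, and then close the argument by proving $\Phi_m(0+,u_m)=\Phi_m(0+,u_k)$. This is exactly what the paper does.

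Where you diverge is in the execution of this last equality. The paper dispatches it much more cheaply: since $u_m-u_k=o(|x|^m)$, one has the equivalence
\[
\int_{\partial B_r} u_m^2 > C\,r^{n-1+2\mu'} \iff \int_{\partial B_r} u_k^2 > C\,r^{n-1+2\mu'}
\]
for every $\mu'<m$; with this in hand one can rerun the case analysis of Lemma~\ref{lem:consist} verbatim for $u_m$ and for $u_k$ in parallel, obtaining the same limit for $\Phi_m$. No blowups, no convergence of rescalings, no frequency computation for $U_0$ are needed. Your blowup route is correct, but it invokes Lemma~\ref{lem:resc-unif-bound}, Proposition~\ref{prop:blowup-homogen-nonzero}, and a comparison of rescaled sequences, all to recover information that is already encoded in the growth of $H$. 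One minor care point in your version: the convergence $N(1,(u_k)_{r_j})\to N(1,U_0)$ uses boundary data on $\partial B_1$, whereas the available estimates are $C^1_{\loc}$ in $B_1$; this is easily fixed by working with $N(\rho,\cdot)$ for a fixed $\rho<1$, but the paper's argument sidesteps the issue entirely.
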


\begin{proof} This lemma is a direct consequence of (in fact, it is equivalent to)
the consistency Lemma~\ref{lem:consist}. Let
  $u_k\in\S_k$ be as in \eqref{eq:uk} and
  $u_m\in\S_m$ be the function corresponding to an integer $2\leq
  m\leq k$. Then $u_m-u_k=o(|x|^m)$ and therefore
  $$
  \Phi_m(0+,u_m)=\Phi_m(0+,u_k)=\min\{\Phi_k(0+,u_k),n-1+2m\}.
  $$
  In fact, to establish the former equality, one has to argue
  similarly to the proof of Lemma~\ref{lem:consist} and just notice
  that
  $$
  \int_{\partial B_r} u_m^2> C\,r^{n-1+2 \mu'}\iff \int_{\partial
    B_r} u_k^2> C\, r^{n-1+2 \mu'}
  $$
  whenever $\mu'<m$. We leave the details to the reader.
\end{proof}

\begin{remark}
Thanks to Lemma~\ref{lem:refinement-fb-class} one can define the
sets
$$
\Gamma_\kappa(v):=\Gamma_\kappa^{(m)}(v),\quad\text{for
}\kappa<m\leq k,
$$
and the latter definition will not depend on the choice of
$m$. On the other hand, the classification obtained by using the
functional $\Phi_k$ could be viewed, loosely speaking, as a ``truncation'' of
a possibly finer classification of points. In particular, the set
$\Gamma^{(k)}_k$ can be considered as the bulk of points of
frequencies $\kappa\geq k$. More precisely, if one knows higher
$C^{k',1}$ regularity of the thin obstacle $\phi$ with $k'>k$, then
$\Gamma^{(k)}_k$ is refined into the union of $\Gamma^{(k')}_\kappa$
with $k\leq \kappa\leq k'$.
\end{remark}

As we have already mentioned in the case of the zero thin obstacle,
the sets $\Gamma_\kappa(v)$ are nonempty only for specific values of
$\kappa$, see Remarks~\ref{rem:missing_values} and
\ref{rem:poss_freq_dim2}. In higher dimensions the only information
known is the one contained in
Lemma~\ref{lem:min-max-homogen-nonzero}, i.e.
$$
\kappa=2-\frac12,\quad\text{or}\quad \kappa\geq 2.
$$

\begin{definition}[Regular points] For  $v\in\S^\phi$ with $\phi\in C^{2,1}(B_1')$ the free boundary point $x_0\in\Gamma(v)$ is called \emph{regular} if
  $x_0\in\Gamma_{2-\frac12}(v)$.
\end{definition}

It is easy to see that the mapping $x_0\mapsto \Phi_k(0+,
u_k^{x_0})$ is upper semicontinuous, and since the value
$\kappa=2-\frac12$ is isolated, one immediately obtains that
$\Gamma_{2-\frac12}(v)$ is a relatively open subset of $\Gamma(v)$.
Furthermore, the following theorem has been established by
Caffarelli, Salsa, and Silvestre \cite{CSS}.

\begin{theorem}[Regularity of the regular set] Let $v\in\S^\phi$ with
  $\phi\in C^{2,1}$. Then $\Gamma_{2-\frac12}(v)$ is locally a
  $C^{1,\alpha}$-regular $(n-2)$-dimensional surface.
\end{theorem}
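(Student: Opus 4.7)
The plan is to reduce to the zero-obstacle framework of Part~\ref{part:zero-thin-obstacle} via the normalization \eqref{eq:ukx0}, and then follow the directional-monotonicity and boundary-Harnack scheme of Athanasopoulos--Caffarelli--Salsa \cite{ACS}, carrying the non-harmonicity error terms through the iteration. For $x_0 \in \Gamma_{2-\frac12}(v)$, set $u := u_2^{x_0} \in \S_2$. By Lemma~\ref{lem:refinement-fb-class} one has $\Phi_2(0+,u) = n+2$, so since $\kappa = 2-\frac12 < k = 2$ both Lemma~\ref{lem:resc-unif-bound} and Proposition~\ref{prop:blowup-homogen-nonzero} apply: the rescalings $u_r$ are uniformly bounded in $W^{1,2}(B_1) \cap C^1_\loc$, and every subsequential blowup $u_0$ is a nonzero $\tfrac32$-homogeneous solution of the zero-obstacle Signorini problem \eqref{eq:signorini-1}--\eqref{eq:signorini-2}. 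By the classification of such solutions (consequence of Lemma~\ref{lem:min-homogen} together with the convexity in $x'$ of \cite{ACS}, cf.~\cite{CSS}), any such $u_0$ has the form
\begin{equation*}
u_0(x) = c\,\Re\bigl(x'\cdot e + i\,|x_n|\bigr)^{3/2}
\end{equation*}
for some $c > 0$ and unit vector $e \in \R^{n-1}$. Upper semicontinuity of $x_0 \mapsto \Phi_2(0+, u_2^{x_0})$ and the gap $(2-\frac12, 2)$ in Lemma~\ref{lem:min-max-homogen-nonzero} make $\Gamma_{2-\frac12}(v)$ relatively open in $\Gamma(v)$.

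To establish uniqueness of the blowup and a H\"older modulus for $e = e(x_0)$, I would argue as follows. Once $u_r$ is $C^1_\loc$-close to some profile $c\,\Re(x'\cdot e + i|x_n|)^{3/2}$ at one scale $r = r_0$, differentiating the Signorini conditions along a tangential direction $\tau$ shows that $w := \partial_\tau u$ satisfies $|\Delta w| \lesssim |x'|^{k-1} = |x'|$ off $B'_{r_0}$, together with Signorini-type complementary conditions on $B'_{r_0}$. The maximum principle applied to the even reflection of $w$, perturbed by an explicit subsolution that absorbs the $O(|x'|)$ error, yields
\begin{equation*}
\partial_\tau u \geq 0 \qquad \text{in } B_{r_0}^+
\end{equation*}
for every $\tau$ in a spherical cap around $e$. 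Consequently, $\Lambda(u)$ is trapped between two Lipschitz graphs with normal close to $e$.

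With directional monotonicity in a cone in hand, I would apply the boundary Harnack principle for slit domains of \cite{ACS} to quotients of tangential derivatives of $u$; passing to the blowup yields uniqueness of $e(x_0)$ and the quantitative decay
\begin{equation*}
\|u_r - c_{x_0}\,\Re(x'\cdot e(x_0) + i\,|x_n|)^{3/2}\|_{C^1(B_{1/2})} \leq C\,r^\alpha.
\end{equation*}
A standard implicit-function argument then realizes $\Gamma(v) = \Gamma(u_2^{x_0})$ near $x_0$ as the graph of a $C^{1,\alpha}$ function of $n-2$ variables, and translating $x_0$ across $\Gamma_{2-\frac12}(v)$ gives the local regularity statement.

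The main obstacle is propagating the non-harmonicity error $|\Delta u| \lesssim |x'|^{k-1}$ through the iteration. Under the rescaling adapted to the $\tfrac32$-homogeneous profile, this term carries a scaling factor $r^{k+1-3/2} = r^{3/2}$ with $k=2$, which is summable across dyadic scales but only just; the auxiliary subsolutions used in the directional-monotonicity step must therefore be designed with sharply matched decay. This is exactly the point at which the assumption $\phi \in C^{2,1}$ is used in its sharp form: higher regularity of $\phi$ would permit the same argument with room to spare, but is not required for the regular stratum of frequency $\tfrac32$.
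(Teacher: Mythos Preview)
The paper does not give its own proof of this theorem: immediately before the statement it writes ``the following theorem has been established by Caffarelli, Salsa, and Silvestre \cite{CSS}'' and then states the result with no argument. So there is no proof in the paper to compare your attempt against; the result is simply imported from \cite{CSS}.

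Your sketch is a faithful outline of the \cite{CSS} strategy (which in turn adapts \cite{ACS} to nonzero obstacles): normalize to $u=u_2^{x_0}\in\S_2$, use the generalized frequency to identify all blowups as rotations of $\Re(x'\cdot e+i|x_n|)^{3/2}$, upgrade to directional monotonicity in a cone of tangential directions, and then run a boundary Harnack argument on quotients of tangential derivatives to obtain $C^{1,\alpha}$ regularity of the level set. The observation that the non-harmonicity error scales like $r^{k+1-3/2}$ under $3/2$-homogeneous rescaling is correct and is indeed the mechanism by which $\phi\in C^{2,1}$ suffices. That said, what you have written is a plan rather than a proof: the directional-monotonicity step and the boundary Harnack step each require substantial work (construction of barriers absorbing the right-hand side, and a Carleson-type estimate in the slit domain), and you have not carried these out. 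If you intend to supply a self-contained proof rather than cite \cite{CSS}, those two steps need to be executed in full.
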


\section{Singular set: statement of main results}
\label{sec:singular-set:-main-1}

Similarly to the case of the zero obstacle, in this section we state
our main results on the structure of the singular set of the free
boundary. The proofs will be given in Section~\ref{sec:singular-set:-proofs}. We begin with the relevant
definition.

\begin{definition}[Singular points]\label{def:sing_pts_nonzero} Let $v\in\S^\phi$. We say
  that $x_0\in\Gamma(v)$ is a \emph{singular free boundary point} if
  $$
  \lim_{r\to 0+}\frac{\H^{n-1}(\Lambda(v)\cap B_r')}{\H^{n-1}(B_r')}=0.
  $$
  We denote the set of singular points by $\Sigma(v)$. If the thin
  obstacle $\phi$ is $C^{k,1}$ regular and $\kappa<k$ then we also
  define
  $$
  \Sigma_\kappa(v):=\Gamma_\kappa(v)\cap \Sigma(v).
  $$
\end{definition}

It will be convenient to abuse the notation and write
$0\in\Sigma_\kappa(u)$ for $u\in\S_k$, whenever $\Lambda(u)$
satisfies a vanishing condition similar to that for $\Lambda(v)$ in
Definition~\ref{def:sing_pts_nonzero} above.

We start with a characterization of singular points in terms of blowups at the generalized frequency. In particular, we show that
$$
\Sigma_\kappa(v)=\Gamma_\kappa(v),\quad\text{for }\kappa=2m<k,\ m\in \N.
$$

\begin{theorem}[Characterization of singular points]\label{thm:blowup-sing-nonzero}
  Let $u\in\S_k$ and $0\in \Gamma_\kappa(u)$ for $\kappa<k$.
 Then the following statements are equivalent:
\begin{enumerate}
\item[(i)] $0\in \Sigma_\kappa(u)$
\item[(ii)] any blowup of $u$ at the origin is a nonzero homogeneous
 polynomial $p_\kappa$ of degree $\kappa$ from the class $\P_\kappa$, i.e.
 $$
 \Delta p_\kappa=0,\quad p_\kappa(x',0)\geq 0,\quad
 p_\kappa(x',-x_n)=p_\kappa(x',x_n)
 $$
\item[(iii)] $\kappa=2m$ for some $m\in\N$.
\end{enumerate}
\end{theorem}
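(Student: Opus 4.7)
The plan is to closely mirror the proof of Theorem~\ref{thm:blowup-sing} in the zero-obstacle case, exploiting the fact that by Proposition~\ref{prop:blowup-homogen-nonzero}, under the assumption $\kappa<k$, every blowup $u_0$ of $u$ at the origin exists, is homogeneous of degree $\kappa$, satisfies $\|u_0\|_{L^2(\partial B_1)}=1$, and actually solves the \emph{zero}-obstacle Signorini problem \eqref{eq:signorini-1}--\eqref{eq:signorini-2}. Thus the analysis of blowups is reduced to that of homogeneous solutions of the zero-obstacle problem, and many steps can simply be transcribed from the proof of Theorem~\ref{thm:blowup-sing}.

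For the implication (i) $\Rightarrow$ (ii) the only real novelty compared to the zero-obstacle case is that $u_r$ is no longer harmonic off $B_1'$. However, as was established in the proof of Lemma~\ref{lem:resc-unif-bound}, one has $|\Delta u_r(x)|\leq Mr|x'|^{k-1}$ in $B_1\setminus B_1'$, and on $B_1'$ one still has $\Delta u_r=2(\partial_{x_n}u_r)\H^{n-1}|_{\Lambda(u_r)}$. Because $|\nabla u_r|$ is locally uniformly bounded (by the $C^1_\loc$-convergence in \eqref{eq:urj-u0-nonzero}), assumption (i) gives $\H^{n-1}(\Lambda(u_r)\cap B_1')\to 0$, and the bulk term $O(r|x'|^{k-1})$ tends to $0$, so $\Delta u_r\to 0$ in $\D'(B_1)$; hence $u_0$ is harmonic in $B_1$. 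Combined with $\kappa$-homogeneity from Proposition~\ref{prop:blowup-homogen-nonzero} one extends $u_0$ harmonically to $\R^n$ with polynomial growth and concludes via Liouville's theorem that $u_0$ is a homogeneous harmonic polynomial of integer degree $\kappa$; the nonnegativity on $\{x_n=0\}$ and the even symmetry in $x_n$ pass to the limit from the corresponding properties of $u_r$.

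The remaining three implications then follow verbatim from their counterparts in Theorem~\ref{thm:blowup-sing}. For (ii) $\Rightarrow$ (iii), if $\kappa$ were odd then $p_\kappa\equiv 0$ and $\partial_{x_n}p_\kappa\equiv 0$ on $\{x_n=0\}$ and Cauchy--Kovalevskaya would force $p_\kappa\equiv 0$, contradicting $\|u_0\|_{L^2(\partial B_1)}=1$. For (iii) $\Rightarrow$ (ii), since by Proposition~\ref{prop:blowup-homogen-nonzero} the blowup $u_0$ is a $2m$-homogeneous global solution of the zero-obstacle Signorini problem, Lemma~\ref{lem:2m-homogen} applies directly to give $u_0\in\P_\kappa$. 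Finally, for (ii) $\Rightarrow$ (i), one runs the contradiction argument from Theorem~\ref{thm:blowup-sing}: if $\H^{n-1}(\Lambda(u_{r_j})\cap B_1')\geq \delta$ along some sequence, the upper-semicontinuity-of-support argument yields $\H^{n-1}(\Lambda(u_0)\cap B_1')\geq\delta$, and since $u_0$ is a polynomial, it must vanish identically on $\{x_n=0\}$; by even symmetry and harmonicity, Cauchy--Kovalevskaya then forces $u_0\equiv 0$, contradicting the normalization. The only step that is not a pure repetition of the zero-obstacle argument is the control of the non-harmonic perturbation in (i) $\Rightarrow$ (ii), and the main obstacle there, namely the inhomogeneity $\Delta u_r$, is precisely what the $C^{k,1}$-assumption on $\phi$ together with the hypothesis $\kappa<k$ is tailored to handle.
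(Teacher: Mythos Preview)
Your proposal is correct and follows precisely the approach the paper intends: the paper's own proof reads in full ``The proof is a minor modification of that of Theorem~\ref{thm:blowup-sing} and is therefore omitted,'' and you have accurately identified both the reduction to the zero-obstacle analysis via Proposition~\ref{prop:blowup-homogen-nonzero} and the one genuine modification, namely the control of the inhomogeneity $|\Delta u_r|\leq Mr|x'|^{k-1}$ in the implication (i) $\Rightarrow$ (ii).
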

\begin{proof} The proof is a minor modification of that of
  Theorem~\ref{thm:blowup-sing} and is therefore omitted.
\end{proof}

The next result is the key step in the study of the singular set.

\begin{theorem}[$\kappa$-differentiability at singular points]\label{thm:k-diff-sing-p-nonzero} Let
  $u\in\S_k$ and $0\in\Sigma_\kappa(u)$ for $\kappa=2m<k$, $m\in\N$.
  There exists a nonzero $p_\kappa\in\P_\kappa$ such that
  $$
  u(x)=p_\kappa(x)+o(|x|^\kappa).
  $$
  Moreover, if $v\in\S^\phi$ with $\phi\in C^{k,1}(B_1')$,
  $x_0\in\Sigma_\kappa(v)$ and $u^{x_0}_k$ is obtained as in
  \eqref{eq:ukx0}, then in the Taylor expansion
  $$
  u^{x_0}_k(x)=p_\kappa^{x_0}(x)+o(|x|^\kappa)
  $$
  the mapping $x_0\mapsto p^{x_0}_\kappa$ from $\Sigma_\kappa(v)$
  to $\P_\kappa$ is continuous.
\end{theorem}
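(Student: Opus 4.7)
My approach would mirror Part~\ref{part:zero-thin-obstacle} exactly, replacing the Weiss and Monneau monotonicity formulas with suitably corrected versions that are valid for $u\in\S_k$. Since $|\Delta u|\leq M|x'|^{k-1}$ and our singular point has frequency $\kappa=2m<k$, every error integral in the derivative of $W_\kappa(r,u)$ or $M_\kappa(r,u,p_\kappa)$ carries an extra positive power $r^\alpha$ with $\alpha=\alpha(\kappa,k)>0$. Therefore one expects functionals of the form $W_\kappa(r,u)+Cr^\alpha$ and $M_\kappa(r,u,p_\kappa)-Cr^\beta$ to be monotone on some $(0,r_0)$, which is exactly what Section~\ref{sec:weiss-monneau-type} will provide. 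Given these corrected monotonicity formulas, the proofs in Section~\ref{sec:singular-points} (Lemma~\ref{lem:nondeg-sing} and Theorems~\ref{thm:uniq-blowup-sing}, \ref{thm:cont-dep-blowup}) carry over with only cosmetic changes.

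\textbf{First part: the Taylor expansion.} By Lemma~\ref{lem:growth-est} applied to $u\in\S_k$ with $\Phi_k(0+,u)=n-1+2\kappa$ one has $|u(x)|\le C|x|^\kappa$ in $B_{1/2}$, so the homogeneous rescalings $u_r^{(\kappa)}(x)=u(rx)/r^\kappa$ are uniformly bounded on compacts. A computation gives $|\Delta u_r^{(\kappa)}(x)|\le M\,r^{k+1-\kappa}|x'|^{k-1}\to 0$, hence any subsequential limit $u_0$ lies in $\S$. Applying the corrected Weiss formula at $u_0$ yields $W_\kappa(\rho,u_0)\equiv 0$, so $u_0$ is $\kappa$-homogeneous; together with Theorem~\ref{thm:blowup-sing-nonzero}, $u_0\in\P_\kappa$. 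To prove nondegeneracy ($u_0\not\equiv 0$, equivalently $\sup_{\partial B_r}|u|\ge c\,r^\kappa$) I would argue by contradiction as in Lemma~\ref{lem:nondeg-sing}: if the $L^2(\partial B_r)$ rescaling produced some $q_\kappa\in\P_\kappa$, the corrected Monneau formula for $M_\kappa(r,u,q_\kappa)$ would give a nonnegative limit at $0+$, but the assumed slow growth forces that limit to be strictly negative. Finally, uniqueness of the blowup follows exactly as in Theorem~\ref{thm:uniq-blowup-sing}: apply the corrected Monneau formula with $p_\kappa=u_0$, so that $M_\kappa(0+,u,u_0)=0$, and conclude $\int_{\partial B_1}(u_r^{(\kappa)}-u_0)^2\to 0$ as $r\to 0+$ (not merely along the extracted subsequence).

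\textbf{Second part: continuous dependence.} Given the uniqueness just proved, the argument is a transcription of Theorem~\ref{thm:cont-dep-blowup} with $u$ replaced by $u_k^{x_0}$. Endow $\P_\kappa$ with the $L^2(\partial B_1)$ norm. Fix $x_0\in\Sigma_\kappa(v)$ and $\epsilon>0$; by the first part, choose $r_\epsilon$ so small that $M_\kappa^{x_0}(r_\epsilon,u_k^{x_0},p_\kappa^{x_0})<\epsilon$. The continuity of $x_0'\mapsto u_k^{x_0'}$ in a suitable topology (explicit from the definition \eqref{eq:ukx0} and the smoothness of $\phi$) gives $M_\kappa^{x_0'}(r_\epsilon,u_k^{x_0'},p_\kappa^{x_0})<2\epsilon$ for $|x_0'-x_0|$ small. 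Applying the corrected Monneau monotonicity centered at $x_0'$ and passing to the limit $r\to 0+$ yields $\int_{\partial B_1}(p_\kappa^{x_0'}-p_\kappa^{x_0})^2\le 2\epsilon$, which gives continuity of $x_0\mapsto p_\kappa^{x_0}$.

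\textbf{Main obstacle.} The delicate point is establishing the corrected Weiss and Monneau formulas with the correct sign for the error term. In the computation of $\tfrac{d}{dr}M_\kappa$ one encounters cross terms of the form $\tfrac{1}{r^{n-1+2\kappa}}\int_{B_r} p_\kappa\,\Delta u$; unlike the zero-obstacle case where $p_\kappa\,\Delta u\le 0$ on $\{x_n=0\}$ provides the key good sign, here $\Delta u$ also picks up the bulk term of size $M|x'|^{k-1}$ which has no sign. The rescue is dimensional: since $|p_\kappa|\le C|x|^\kappa$ and $|\Delta u|\le M|x'|^{k-1}$, this cross term is bounded by $Cr^{k-1+\kappa-(2\kappa-1)}=Cr^{k-\kappa}$ after division by the normalizing factor, and $k-\kappa>0$ makes it integrable in $\log r$. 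One must therefore choose the additive correction in $M_\kappa$ of exactly the right power $r^{k-\kappa}$ (or $r^{k-\kappa}/(k-\kappa)$) so that differentiating it dominates the error, and similarly for $W_\kappa$. Once this bookkeeping is done carefully—this is what Section~\ref{sec:weiss-monneau-type} is for—the rest of the argument is a routine adaptation of the zero-obstacle proofs.
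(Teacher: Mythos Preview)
Your proposal is correct and follows essentially the same route as the paper: the proof of Theorem~\ref{thm:k-diff-sing-p-nonzero} is obtained by combining Theorems~\ref{thm:uniq-blowup-sing-nonzero} and~\ref{thm:cont-dep-blowup-nonzero}, which in turn rest on the corrected Weiss and Monneau formulas (Theorems~\ref{thm:Weiss-monot-nonzero} and~\ref{thm:Monn-formula-nonzero}) and the nondegeneracy Lemma~\ref{lem:nondeg-sing-nonzero}, exactly as you outline. Two small slips worth fixing: the additive correction in both monotonicity formulas has sign $+C_M r$ (so that $\tfrac{d}{dr}W_\kappa\ge -C_M$ and $\tfrac{d}{dr}M_\kappa\ge -C_M(1+\|p_\kappa\|)$ yield monotonicity of $W_\kappa+C_M r$ and $M_\kappa+Cr$, not $M_\kappa-Cr^\beta$), and consequently in the continuity step the final bound reads $\int_{\partial B_1}(p_\kappa^{x_0'}-p_\kappa^{x_0})^2\le 2\epsilon+Cr_\epsilon$ rather than $2\epsilon$; neither point affects the argument.
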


\begin{definition}[Dimension at the singular point]\label{def:dim-sing-point-nonzero} For  $v\in\S^\phi$ and a singular point $x_0\in\Sigma_k(v)$ we denote
  $$
  d_\kappa^{x_0}:=\dim\{\xi\in\R^{n-1} \mid \xi\cdot \nabla_{x'}
  p_\kappa^{x_0}(x',0)=0\text{ for every }x'\in\R^{n-1} \}
  $$
  the degree of degeneracy of the polynomial $p_\kappa^{x_0}$,
  which we call the dimension of $\Sigma_\kappa(v)$ at $x_0$. Note that
  since $p_\kappa^{x_0}\not\equiv 0$ on $\R^{n-1}\times\{0\}$ one has
  $$
  0\leq d_\kappa^{x_0}\leq n-2.
  $$
  Then for $d=0,1,\ldots, n-2$ define
  $$
  \Sigma_\kappa^d(v):=\{x_0\in\Sigma_\kappa(u) \mid d^{x_0}_\kappa=d\}.
  $$
\end{definition}

\begin{theorem}[Structure of the singular set]\label{thm:sing-points-nonzero} Let
  $v\in\S^\phi$ with $\phi\in C^{k,1}(B_1')$. Then
 $\Sigma_\kappa(v)=\Gamma_\kappa(v)$  for $\kappa=2m<k$, $m\in\N$, and every set 
  $\Sigma_\kappa^d(v)$ for $d=0,1,\ldots, 
  n-2$, is contained in a countable union of $d$-dimensional $C^1$
  manifolds.
\end{theorem}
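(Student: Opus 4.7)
The identity $\Sigma_\kappa(v)=\Gamma_\kappa(v)$ for $\kappa=2m<k$ is an immediate consequence of Theorem~\ref{thm:blowup-sing-nonzero}: at every point of $\Gamma_\kappa(v)$ with $\kappa=2m$ the unique blowup (after subtracting the harmonic extension of the Taylor polynomial) is a homogeneous harmonic polynomial in $\P_\kappa$, and a simple argument with its nodal set shows that $\Lambda(v)$ has vanishing density at such a point. So the entire content of the theorem lies in the structural statement for $\Sigma_\kappa^d(v)$, and the plan is to run the Whitney-extension plus implicit-function-theorem argument of Theorem~\ref{thm:sing-points}, adapting the zero-obstacle bookkeeping to the nonzero case.

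The key observation is that, although the solution $v$ itself does not admit a polynomial Taylor expansion on $\Gamma(v)$, the function
\[
g(x'):=v(x',0)-\phi(x'),\qquad x'\in B_1',
\]
does. Indeed, since $\tilde Q_k^{x_0}(y)-Q_k^{x_0}(y')$ vanishes on $\{y_n=0\}$, the definition \eqref{eq:ukx0} combined with Theorem~\ref{thm:k-diff-sing-p-nonzero} gives, for every $x_0\in\Sigma_\kappa(v)$,
\[
g(x')=p_\kappa^{x_0}(x'-x_0',0)+o(|x'-x_0'|^\kappa),
\]
and the coefficients of the homogeneous polynomial $q^{x_0}(\xi):=p_\kappa^{x_0}(\xi,0)$ depend continuously on $x_0\in\Sigma_\kappa(v)$. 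We then define Whitney data $f_\alpha(x_0')=0$ for $|\alpha|<\kappa$ and $f_\alpha(x_0')=\partial^\alpha q^{x_0}/\alpha!$ for $|\alpha|=\kappa$, and aim to apply Whitney's extension theorem on a suitable $F_\sigma$-decomposition of $\Sigma_\kappa(v)$.

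To produce that decomposition and to verify Whitney's compatibility condition, I would first establish the nondegeneracy analogue of Lemma~\ref{lem:nondeg-sing}: for each $x_0\in\Sigma_\kappa(v)$ there exists $c>0$ with $\sup_{\partial B_r(x_0)}|u_k^{x_0}|\ge c\,r^\kappa$ for small $r$. This should follow from the generalized Monneau-type formula proved in Section~\ref{sec:weiss-monneau-type}, applied exactly as in Lemma~\ref{lem:nondeg-sing} (with the nonzero error terms absorbed by the truncation at scale $r^{n-1+2k}$, using $\kappa<k$). Combined with the upper bound in Lemma~\ref{lem:growth-est}, this yields a countable closed exhaustion $\Sigma_\kappa(v)=\bigcup_j E_j$ analogous to the one in Lemma~\ref{lem:sing-set-F-sigma}. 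On each $E_j$, the Whitney compatibility estimate is proved by the same blowup-by-contradiction scheme used in Lemma~\ref{lem:Whitney-compat}: rescale $u_k^{x_0^i}$ at scale $\rho_i=|x^i-x_0^i|$, use Theorem~\ref{thm:k-diff-sing-p-nonzero} to identify the limit with $p_\kappa^{x_0}$, and use the two-sided growth bounds on $E_j$ to conclude the limiting direction is itself a singular point of $p_\kappa^{x_0}\in\P_\kappa$, contradicting nontriviality of the lower-order derivative.

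Once Whitney's theorem produces $F\in C^\kappa(\R^{n-1})$ with $\partial^\alpha F(x_0')=f_\alpha(x_0')$ on $E_j$, the implicit function theorem finishes the argument exactly as in Step~2 of the proof of Theorem~\ref{thm:sing-points}: if $x_0\in\Sigma_\kappa^d(v)\cap E_j$, the definition of $d=d_\kappa^{x_0}$ provides $n-1-d$ independent directions $\nu_i\in\R^{n-1}$ and multi-indices $\beta^i$ with $|\beta^i|=\kappa-1$ such that $\partial_{\nu_i}\partial^{\beta^i}F(x_0')\neq 0$, while $\Sigma_\kappa^d(v)\cap E_j\subset\bigcap_i\{\partial^{\beta^i}F=0\}$, and each of these level sets is locally a $C^1$ hypersurface of $\R^{n-1}$ whose intersection has dimension $d$. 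Taking the union over $j$ then gives the desired countable covering by $d$-dimensional $C^1$ manifolds. I expect the genuinely new step to be the verification of the Whitney compatibility in the presence of obstacle error terms, since there one must carry through the blowup argument while controlling the inhomogeneous right-hand side $|\Delta u_k^{x_0}|\leq M|x'|^{k-1}$; the extra room comes precisely from the strict inequality $\kappa<k$, which lets the obstacle terms disappear in the $\kappa$-homogeneous rescaling.
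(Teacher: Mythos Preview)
Your proposal is correct and follows the same Whitney-extension plus implicit-function-theorem scheme as the paper, including the $F_\sigma$ decomposition into sets $E_j$ via two-sided growth bounds and the blowup-by-contradiction verification of the Whitney compatibility conditions. Working on $\R^{n-1}$ with the restrictions $q^{x_0}=p_\kappa^{x_0}(\cdot,0)$ rather than on $\R^n$ with the full $p_\kappa^{x_0}$ is an equivalent (and slightly cleaner) formulation, since $p_\kappa^{x_0}$ is uniquely recovered from its trace on $\{x_n=0\}$.

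One refinement: you slightly misidentify the ``genuinely new step'' in the Whitney compatibility argument. The inhomogeneous right-hand side $|\Delta u_k^{x_0}|\le M|x'|^{k-1}$ is \emph{not} the obstruction---it already disappears in the $\kappa$-homogeneous blowup by the same mechanism as in Proposition~\ref{prop:blowup-homogen-nonzero}. The real extra work, which is the paper's \eqref{eq:wixi}, is that $u_k^{x_0^i}$ and $u_k^{x^i}$ are obtained by subtracting \emph{different} Taylor polynomials of $\phi$, so the rescaling $w^i$ based at $x_0^i$ and the rescaling $\tilde w^i$ based at $x^i$ are not mere translates of each other. To transfer the two-sided growth bound from $E_j$ at $x^i$ to a growth bound for $p_\kappa^{x_0}$ at $\xi_0$, one must show $w^i(\cdot+\xi^i)-\tilde w^i(\cdot)\to 0$; this is where $\phi\in C^{k,1}$ and $\kappa<k$ are actually used, since the difference of the two Taylor polynomials is $o(\rho_i^k)$ and hence $o(\rho_i^k)/\rho_i^\kappa\to 0$.
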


\section{Weiss and Monneau type  monotonicity formulas}
\label{sec:weiss-monneau-type}

The main tools in the proof of Theorems
\ref{thm:k-diff-sing-p-nonzero} and \ref{thm:sing-points-nonzero}
are Weiss and Monneau type monotonicity formulas, similar to those
in Section~\ref{sec:weiss-type-monot}.

\subsection{Weiss type monotonicity formulas}
\label{sec:weiss-type-monot-2}

\begin{theorem}[Weiss type Monotonicity Formula]\label{thm:Weiss-monot-nonzero}
  Let $u\in\S_k(M)$ and suppose that $0\in \Gamma^{(k)}_\kappa(u)$ for $\kappa\leq k$. There exist $r_M>0$ and $C_M\geq 0$ such that
\begin{align*}
  W_\kappa(r,u)&:=\frac{1}{r^{n-2+2\kappa}}\int_{B_r}|\nabla u|^2-\frac{\kappa}{r^{n-1+2 \kappa}}\int_{\partial B_r} u^2\\
  &=\frac{1}{r^{n-2+2 \kappa}} D(r)-\frac{\kappa}{r^{n-1+2 \kappa}}
  H(r).
\end{align*}
satisfies
$$
\frac{d}{dr}W_\kappa(r)\geq - C_M\quad\text{for }0<r<r_M.
$$
\end{theorem}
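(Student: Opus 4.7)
The plan is to repeat the computation that produced Theorem~\ref{thm:weiss} in the zero-obstacle case and then carefully control the error terms that now arise because $\Delta u$ no longer vanishes away from $B_1'$. Writing
$$
W_\kappa(r,u)=\frac{1}{r^{n-2+2\kappa}}D(r)-\frac{\kappa}{r^{n-1+2\kappa}}H(r),
$$
I would differentiate directly and plug in the formulas for $H'(r)$ and $D'(r)$ from Lemma~\ref{lem:deriv}. All terms coming from the $H'$-identity and from the first two terms of the $D'$-identity combine in exactly the same way as in the proof of Theorem~\ref{thm:weiss}, producing $\frac{2}{r^{n+2\kappa}}\int_{\partial B_r}(x\cdot\nabla u-\kappa u)^2$. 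The only novelty is the contribution of $\frac{-2}{r}\int_{B_r}(x\cdot\nabla u)\Delta u$ from $D'$, together with the term $-\frac{2\kappa}{r}\int_{B_r}u\Delta u$ that appears when one expresses $D(r)$ via $\int_{\partial B_r}uu_\nu$. Grouping these gives
$$
\frac{d}{dr}W_\kappa(r,u)=\frac{2}{r^{n+2\kappa}}\int_{\partial B_r}(x\cdot\nabla u-\kappa u)^2-\frac{2}{r^{n-1+2\kappa}}\int_{B_r}(x\cdot\nabla u-\kappa u)\,\Delta u.
$$

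The first term is manifestly nonnegative, so the task reduces to bounding the absolute value of the error term by a constant $C_M$ on some interval $(0,r_M)$. Here I would invoke Cauchy--Schwarz: from the defining bound $|\Delta u(x)|\leq M|x'|^{k-1}$ in \eqref{eq:signorini-1-diff} one has
$$
\Bigl(\int_{B_r}(\Delta u)^2\Bigr)^{1/2}\leq C_M\, r^{n/2+k-1}.
$$
For the companion factor I would write
$$
\int_{B_r}(x\cdot\nabla u-\kappa u)^2\leq 2r^2 D(r)+2\kappa^2 G(r),
$$
and then apply the growth estimates from Lemma~\ref{lem:growth-est}, namely $D(r)\leq C_M r^{n-2+2\kappa}$ and $G(r)\leq C_M r^{n+2\kappa}$ (which are legitimate since $0\in\Gamma^{(k)}_\kappa(u)$), to get $\bigl(\int_{B_r}(x\cdot\nabla u-\kappa u)^2\bigr)^{1/2}\leq C_M r^{n/2+\kappa}$. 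Combining the two estimates yields
$$
\Bigl|\int_{B_r}(x\cdot\nabla u-\kappa u)\,\Delta u\Bigr|\leq C_M\, r^{n+\kappa+k-1},
$$
so that the error term is bounded in absolute value by $C_M\, r^{k-\kappa}$. Since $\kappa\leq k$, this is bounded on $(0,r_M)$, which gives the desired lower bound $\frac{d}{dr}W_\kappa(r,u)\geq -C_M$.

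The main conceptual obstacle is ensuring that Lemma~\ref{lem:growth-est} actually applies: this lemma requires $\kappa\leq k$, which is consistent with the hypothesis of the theorem but is the point at which the $C^{k,1}$-regularity threshold of the obstacle enters decisively. If $\kappa=k$ the error is only bounded (not decaying), which is exactly why the conclusion is a one-sided differential inequality $\frac{d}{dr}W_\kappa\geq -C_M$ rather than the clean monotonicity of Theorem~\ref{thm:weiss}; in the strict case $\kappa<k$ one in fact recovers that $r\mapsto W_\kappa(r,u)+C_M r^{k-\kappa}$ is nondecreasing, which will be useful later when passing to the Monneau formula in Section~\ref{sec:weiss-monneau-type}.
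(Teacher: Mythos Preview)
Your proposal is correct and follows essentially the same route as the paper's proof: differentiate $W_\kappa$ using Lemma~\ref{lem:deriv}, arrive at the identity
\[
\frac{d}{dr}W_\kappa(r,u)=\frac{2}{r^{n+2\kappa}}\int_{\partial B_r}(x\cdot\nabla u-\kappa u)^2-\frac{2}{r^{n-1+2\kappa}}\int_{B_r}(x\cdot\nabla u-\kappa u)\,\Delta u,
\]
drop the nonnegative square, and bound the error via Cauchy--Schwarz together with the growth estimates of Lemma~\ref{lem:growth-est}, yielding $|{\rm error}|\leq C_M r^{k-\kappa}$. One small point worth making explicit (the paper invokes Remark~\ref{rem:discard}): the singular part of $\Delta u$ supported on $B_r'$ does not contribute to $\int_{B_r}(x\cdot\nabla u-\kappa u)\Delta u$ since $u$ and $x\cdot\nabla u$ both vanish on $\Lambda(u)$, so your Cauchy--Schwarz step is really over $B_r^+$ where the pointwise bound $|\Delta u|\leq M|x'|^{k-1}$ applies.
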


\begin{proof}
  Proof by a direct computation. Using Lemmas~\ref{lem:deriv} and
  \ref{lem:growth-est}, we obtain
\begin{align*}
  \frac{d}{dr}W_\kappa(u,r)&=\frac{1}{r^{n-2+2\kappa}}\left\{D'(r)-\frac{n-2+2\kappa}r
    D(r)-\frac\kappa r H'(r)+\frac{\kappa(n-1+2\kappa)}{r^2}H(r)\right\}\\
  &=\frac{2}{r^{n-2+2\kappa}}\left\{\int_{\partial B_r}(\partial_\nu
    u)^2-\frac{\kappa}{r} D(r)-\frac{\kappa}{r}\int_{\partial B_r}
    u\partial_\nu u+\frac{\kappa^2}{r^2}\int_{\partial B_r}
    u^2\right.\\
  &\phantom{\mbox{}=\frac{2}{r^{n-2+2\kappa}}\bigg\{}\left.\mbox{}-\frac1r\int_{B_r}\Delta u (\nabla u\cdot x)\right\}\\
  &=\frac{2}{r^{n-2+2\kappa}}\left\{\int_{\partial B_r}(\partial_\nu
    u)^2-\frac{2\kappa}{r}\int_{\partial B_r} u\partial_\nu
    u+\frac{\kappa^2}{r^2}\int_{\partial B_r}
    u^2\right.\\
  &\phantom{\mbox{}=\frac{2}{r^{n-2+2\kappa}}\bigg\{}\left.\mbox{}+\frac{\kappa}r\int_{B_r}
    u\Delta u-\frac1r\int_{B_r}\Delta u
    (\nabla u\cdot x)\right\}\\
  &=\frac{2}{r^{n-2+2\kappa}}\left\{ \frac1{r^2}\int_{\partial B_r}
    (x\cdot \nabla u-\kappa u)^2-\frac1r\int_{B_r} \Delta u
    (x\cdot\nabla u-\kappa u)
  \right\}\\
  &\geq -\frac{2}{r^{n-1+2\kappa}}\int_{B_r} \Delta u (x\cdot\nabla
  u-\kappa u)\\
  &\geq-\frac{C}{r^{n-1+2\kappa}}\left(\int_{B_r^+}(\Delta
    u)^2\right)^{\frac12}\left(\int_{B_r^+}(x\cdot\nabla u-\kappa
    u)^2\right)^{\frac12}\\
  &\geq
  -C\frac{r^{\frac{n}2+k-1}r^{\frac{n}2+\kappa}}{r^{n-1+2\kappa}}=-C
  r^{k-\kappa}\geq -C,
\end{align*}
where in the step next to the last we have used the estimates in Lemma~\ref{lem:growth-est}.
\end{proof}


\subsection{Monneau type monotonicity formulas}
\label{sec:monn-type-monot-1}

\begin{theorem}[Monneau type Monotonicity Formula]\label{thm:Monn-formula-nonzero} Let $u\in\S_k(M)$ and
  suppose that $0\in\Sigma_\kappa(u)$ with $\kappa=2m<k$, $m\in\N$.
  For any $p_\kappa\in\P_\kappa$ there exist $r_M>0$ and $C_M\geq 0$
  such that
  $$
  M_\kappa(r,u,p_\kappa)=\frac{1}{r^{n-1+2\kappa}}\int_{\partial
    B_r} (u-p_\kappa)^2
  $$
  satisfies
  $$
  \frac{d}{dr} M_\kappa(r,u,p_\kappa)\geq -C_M
  \left(1+\|p_\kappa\|_{L^2(B_1)}\right)\quad\text{for }0<r<r_M.
  $$
\end{theorem}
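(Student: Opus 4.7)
The plan is to mirror the proof of Theorem~\ref{thm:Monn-mon-form} from the zero-obstacle case, while tracking two families of error terms that now enter: the perturbation in the Weiss-type monotonicity of Theorem~\ref{thm:Weiss-monot-nonzero}, and the contribution of $\Delta u$ on $B_r\setminus B_r'$. Setting $w:=u-p_\kappa$, the hypothesis $p_\kappa\in\P_\kappa$ gives $W_\kappa(r,p_\kappa)\equiv 0$, $\Delta w=\Delta u$, and $x\cdot\nabla p_\kappa=\kappa\,p_\kappa$. Expanding $W_\kappa(r,u)=W_\kappa(r,u)-W_\kappa(r,p_\kappa)$ and integrating by parts (using the even extension of $u$ and the vanishing of $\partial_{x_n}p_\kappa$ on $\{x_n=0\}$), exactly as in the zero-obstacle case, yields
\[
W_\kappa(r,u) = -\frac{1}{r^{n-2+2\kappa}}\int_{B_r} w\,\Delta w + \frac{1}{r^{n-1+2\kappa}}\int_{\partial B_r} w(x\cdot\nabla w-\kappa w),
\]
where $\int_{B_r} w\,\Delta w$ is to be read against the decomposition $\Delta u = f + 2(\partial_{x_n}u)\H^{n-1}\big|_{\Lambda(u)}$ with $|f|\leq M|x'|^{k-1}$ on $B_r\setminus B_r'$. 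Combining this with the elementary identity $\frac{d}{dr}\bigl(r^{-(n-1+2\kappa)}\int_{\partial B_r}w^2\bigr)=\frac{2}{r^{n+2\kappa}}\int_{\partial B_r}w(x\cdot\nabla w-\kappa w)$ produces the master formula
\[
\frac{d}{dr}M_\kappa(r,u,p_\kappa) = \frac{2}{r}\,W_\kappa(r,u) + \frac{2}{r^{n-1+2\kappa}}\int_{B_r} w\,\Delta w.
\]

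For the first term on the right, writing $W_\kappa(r,u)=\frac{H(r)}{r^{n-1+2\kappa}}(N(r,u)-\kappa)$ and combining the growth estimate $H(r)\leq C_M r^{n-1+2\kappa}$ from Lemma~\ref{lem:growth-est} with the lower bound $N(r,u)\geq \kappa - C_M r$ (which follows from $\Phi_k(r,u)=(1+C_M r)(n-1+2N(r,u))$, valid because $H(r)>r^{n-1+2k}$ for small $r$ when $\kappa<k$ as in the proof of Lemma~\ref{lem:consist}, being nondecreasing with $\Phi_k(0+,u)=n-1+2\kappa$) yields $W_\kappa(r,u)\geq -C_M r$, so $\frac{2}{r}W_\kappa(r,u)\geq -2C_M$.

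For the second term, the plan is to split the integral into the pieces on $B_r'$ and on $B_r\setminus B_r'$. On the thin set, $u\equiv 0$ on $\Lambda(u)$ and $\partial_{x_n}u\equiv 0$ off $\Lambda(u)$, so the contribution reduces to $\int_{B_r\cap\Lambda(u)}(-p_\kappa)\cdot 2\partial_{x_n}u\,d\H^{n-1}$, which is nonnegative thanks to $p_\kappa\geq 0$ on $\{x_n=0\}$ and $\partial_{x_n}u\leq 0$ on $\Lambda(u)$. On $B_r\setminus B_r'$, Cauchy--Schwarz combined with $|\Delta u|\leq M|x'|^{k-1}$, the estimate $\|u\|_{L^2(B_r)}\leq C_M r^{n/2+\kappa}$ from Lemma~\ref{lem:growth-est}, and the homogeneity identity $\|p_\kappa\|_{L^2(B_r)}=\|p_\kappa\|_{L^2(B_1)}\,r^{n/2+\kappa}$ delivers
\[
\Bigl|\int_{B_r\setminus B_r'} w\,\Delta u\,dx\Bigr| \leq C_M\bigl(1+\|p_\kappa\|_{L^2(B_1)}\bigr)\,r^{n+\kappa+k-1}.
\]
Dividing by $r^{n-1+2\kappa}$, the factor $r^{k-\kappa}$ is bounded by $1$ thanks to the hypothesis $\kappa<k$, so $\frac{2}{r^{n-1+2\kappa}}\int_{B_r}w\,\Delta w \geq -C_M(1+\|p_\kappa\|_{L^2(B_1)})$, and adding the two bounds gives the desired inequality. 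The main obstacle, and the essential structural point, is the strict inequality $\kappa<k$, which alone renders $r^{k-\kappa}$ bounded; at the critical truncation frequency $\kappa=k$ this approach would fail to close, mirroring the limitation already visible in Lemma~\ref{lem:resc-unif-bound}.
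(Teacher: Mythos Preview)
Your proposal is essentially correct and follows the paper's proof closely: the master identity
\[
\frac{d}{dr}M_\kappa(r,u,p_\kappa)=\frac{2}{r}\,W_\kappa(r,u)+\frac{2}{r^{n-1+2\kappa}}\int_{B_r}w\,\Delta w
\]
and the treatment of the second term (nonnegativity of the thin part, Cauchy--Schwarz off $B_r'$ yielding a factor $r^{k-\kappa}$) match the paper exactly.

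The one genuine difference is in how you bound $\tfrac{2}{r}W_\kappa(r,u)$ from below. The paper simply invokes Theorem~\ref{thm:Weiss-monot-nonzero} (giving $\frac{d}{dr}W_\kappa\geq -C_M$) together with the computation $W_\kappa(0+,u)=0$, whence $W_\kappa(r,u)\geq -C_M r$. You instead factor $W_\kappa(r,u)=\frac{H(r)}{r^{n-1+2\kappa}}\bigl(N(r,u)-\kappa\bigr)$ and combine the growth bound on $H$ with a claimed lower bound $N(r,u)\geq \kappa-C_M r$. This is workable, but your justification is not quite right: the relation $\Phi_k(r,u)=(1+C_M r)\bigl(n-1+2N(r,u)\bigr)$ that you cite is only a heuristic statement in the paper (Section~\ref{sec:growth-near-free}); the exact identity carries an additional term $\frac{2r}{H(r)}\int_{B_r}u\,\Delta u$ coming from $I(r)=D(r)+\int_{B_r}u\,\Delta u$ in \eqref{eq:GI}. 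That correction can be controlled by the same estimates you already use (it is $O(r^{k-\kappa'})$ after dividing, as in the proof of Lemma~\ref{lem:resc-unif-bound}), so your route can be completed, but the paper's approach via the Weiss formula is cleaner and avoids this extra bookkeeping.
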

\begin{proof} First note that if $0\in \Gamma_\kappa^{(k)}(u)$ for
  $\kappa<k$, then
  $$
  W_\kappa(0+,u)=0.
  $$
  Indeed, using Lemma~\ref{lem:deriv}, we represent
\begin{align*}
  W_\kappa(r,u)&=\frac1{2\,r^{n-1+2 \kappa}}\left(r H'(r)-(n-1+2 \kappa) H(r)-2r\int_{B_r} u \Delta u\right)\\
  &=\frac{H(r)}{2\,r^{n-1+2 \kappa}}\left(r\frac{H'(r)}{H(r)}-(n-1+2
    \kappa)\right)-\frac{\int_{B_r} u \Delta u}{r^{n-2+2 \kappa}}.
\end{align*}
Now the identity $W_\kappa(0+,u)=0$ follows from the following facts:
\begin{itemize}
\item[(i)] $r\dfrac{H'(r)}{H(r)}\to n-1+2 \kappa$, since $\kappa<k$,

\item[(ii)]$\dfrac{H(r)}{r^{n-2+2 \kappa}}$ is bounded, in view of the growth
  estimate in Lemma~\ref{lem:growth-est},

\item[(iii)] $\dfrac{\left|\int_{B_r} u \Delta u\right|}{r^{n-1+2
      \kappa}}\leq C\, r^{k+1-\kappa}$, by the arguments in the proof of
  Lemma~\ref{lem:resc-unif-bound}.
\end{itemize}
Next, we also observe that $W_\kappa(r,p_\kappa)=0$ for any
$p_\kappa\in\P_\kappa$. Setting $w=u-p_\kappa$, and repeating the
the computations in the proof of Theorem~\ref{thm:Monn-mon-form}, we
then obtain
$$
W_\kappa(r,u)= \frac{1}{r^{n-2+2\kappa}}\int_{B_r}{(-w\Delta
  w)}+\frac{1}{r^{n-1+2\kappa}}\int_{\partial B_r} w(x\nabla w-\kappa
w)
$$
and
$$
\frac{d}{dr}M_\kappa(r,u,p_\kappa)=\frac{2}{r^{n+2\kappa}}\int_{\partial
  B_r} w (x\cdot\nabla w-\kappa w).
$$
This gives
$$
\frac{d}{dr}M_\kappa(r,u,p_\kappa)=\frac{2}{r}\,
W_\kappa(r,u)+\frac{2}{r^{n-1+2\kappa}}\int_{B_r} w\Delta w.
$$
For the first term in the right-hand side note that by
Theorem~\ref{thm:Weiss-monot-nonzero} we obtain
$$
W_\kappa(r,u)=W_\kappa(r,u)-W_\kappa(0+,u)\geq -C_M r.
$$
For the second term, note that $w\Delta w$ is a nonnegative measure
on $B'_r$. Off $B'_r$, we can control $w\Delta w=w\Delta u$ by the
Cauchy-Schwarz inequality. Hence,
\begin{align*}
  \frac{d}{dr}M_\kappa(r,u,p_\kappa)&\geq
  -C_M-\frac{2}{r^{n-1+2\kappa}}\left(\int_{B_r^+}
    w^2\right)^{\frac12}\left(\int_{B_r^+}(\Delta u)^2\right)^{\frac12}\\
  &
  \geq -C_M-C_M\left(C_M+\|p_\kappa\|_{L^2(B_1)}\right) r^{k-\kappa}\\
  &\geq -C_M\left(1+\|p_\kappa\|_{L^2(B_1)}\right).
\end{align*}
\end{proof}

\section{Singular set: proofs}
\label{sec:singular-set:-proofs}

In this section we prove Theorems~\ref{thm:k-diff-sing-p-nonzero}
and \ref{thm:sing-points-nonzero}. The structure of the proofs is
essentially the same as for their counterparts in the zero obstacle
case, see Theorems~\ref{thm:k-diff-sing-p} and
\ref{thm:sing-points}. The only difference is that they based on the
results that have been so far developed for the case of a nonzero
obstacle. We start with the nondegeneracy lemma, similar to
Lemma~\ref{lem:nondeg-sing} in the zero obstacle case.

\begin{lemma}[Nondegeneracy at singular points]\label{lem:nondeg-sing-nonzero} Let $u\in\S_k$ and
  $0\in\Sigma_\kappa (u)$ for $\kappa<k$. There exists $c>0$ such that
  $$
  \sup_{\partial B_r} |u(x)|\geq c\,r^\kappa.
  $$
\end{lemma}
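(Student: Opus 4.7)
The plan is to mirror the argument of Lemma~\ref{lem:nondeg-sing} for the zero-obstacle case, but the additive error in the almost-monotonicity of $M_\kappa$ (Theorem~\ref{thm:Monn-formula-nonzero}) prevents that argument from closing directly. I will compensate by a bootstrap step that pits the resulting upper bound on $h_r:=(r^{1-n}H(r))^{1/2}$ against a lower bound supplied by the generalized frequency $\Phi_k$.

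I argue by contradiction. If the conclusion fails, then (using $h_r\leq C\sup_{\partial B_r}|u|$) there is a sequence $r_j\to 0+$ with $h_{r_j}/r_j^\kappa\to 0$. By Lemma~\ref{lem:resc-unif-bound} the rescalings $u_{r_j}$ are relatively compact in $L^2(\partial B_1)\cap C^1_\loc$; by Proposition~\ref{prop:blowup-homogen-nonzero} and Theorem~\ref{thm:blowup-sing-nonzero} (since $0\in\Sigma_\kappa(u)$), a subsequence converges to a nonzero polynomial $q_\kappa\in\P_\kappa$ normalized by $\|q_\kappa\|_{L^2(\partial B_1)}=1$. Expanding
$$
M_\kappa(r,u,q_\kappa)=\bigl(h_r/r^\kappa\bigr)^{2}-2\bigl(h_r/r^\kappa\bigr)\int_{\partial B_1}u_r\,q_\kappa+1,
$$
one finds $M_\kappa(r_j,u,q_\kappa)\to 1$ along that subsequence. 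Theorem~\ref{thm:Monn-formula-nonzero} then promotes this to $M_\kappa(0+,u,q_\kappa)=1$ and yields
$$
M_\kappa(r,u,q_\kappa)\geq 1-C'r,\qquad 0<r<r_M,
$$
with $C'=C_M(1+\|q_\kappa\|_{L^2(B_1)})$. Substituting the expansion, rearranging, and using $\int_{\partial B_1}u_{r_j}q_\kappa\to 1$ gives $\epsilon_j(2a_j-\epsilon_j)\leq C'r_j$ with $\epsilon_j:=h_{r_j}/r_j^\kappa$ and $a_j:=\int u_{r_j}q_\kappa$, whence the quantitative bound $h_{r_j}\leq C'\,r_j^{\kappa+1}$ for all $j$ large enough.

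To close the argument I will invoke a matching lower bound on $h_r$. Since $\Phi_k(0+,u)=n-1+2\kappa<n-1+2k$, monotonicity of $\Phi_k$ forces $H(r)>r^{n-1+2k}$ for all sufficiently small $r$, so $r_j$ eventually lies in the nondegenerate regime where $\Phi_k(r,u)=(r+C_Mr^2)H'(r)/H(r)$. Running the integration argument of Lemma~\ref{lem:consist} (part 2a) with $\mu'=\kappa+\eta$ for small $\eta>0$ then produces $h_r\geq c_\eta\, r^{\kappa+\eta}$ for $r<r_\eta$. Choosing $\eta=1/2$ and comparing with the previous estimate gives $c_{1/2}\leq C'r_j^{1/2}\to 0$, the desired contradiction.

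The main obstacle is the additive linear error $C'r$ in Monneau's almost-monotonicity: it wrecks the direct contradiction available in the zero-obstacle case and only yields the weaker estimate $h_{r_j}=O(r_j^{\kappa+1})$, which is sharper than the assumed $h_{r_j}=o(r_j^\kappa)$ but is not itself absurd. What rescues the argument is that the lower bound from $\Phi_k$-monotonicity is essentially $r^{\kappa+\eta}$ for every $\eta>0$, leaving a comfortable gap against $r^{\kappa+1}$.
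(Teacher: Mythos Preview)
Your proof is correct and takes essentially the same approach as the paper: both use Monneau's almost-monotonicity to reach an inequality equivalent to $\epsilon_j(2a_j-\epsilon_j)\leq C'r_j$, and both then invoke the lower bound $h_r\geq c\,r^{\kappa'}$ for some $\kappa<\kappa'<\kappa+1$ (coming from the generalized frequency, exactly as in Lemma~\ref{lem:consist}) to dispose of the linear error. The paper phrases the final step as showing $r^{\kappa+1}/h_r\to 0$ so that passing to the limit gives $-\int_{\partial B_1}q_\kappa^2\geq 0$; your version, extracting $h_{r_j}=O(r_j^{\kappa+1})$ and comparing with $h_{r_j}\geq c_{1/2}\,r_j^{\kappa+1/2}$, is an algebraically equivalent rearrangement of the same contradiction.
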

\begin{proof} Assume the contrary. Then for a sequence $r=r_j\to 0$
  one has
  $$
  h_r:=\left(\frac{1}{r^{n-1}}\int_{\partial B_r}
    u^2\right)^{1/2}=o(r^{\kappa}).
  $$
  Passing to a subsequence if necessary we may assume that
  $$
  u_r(x)=\frac{u(rx)}{h_r}\to q_\kappa(x)\quad\text{uniformly on
  }\partial B_1
  $$
  for some nonzero $q_\kappa\in\P_\kappa$.  Now for such $q_\kappa$
  we apply Theorem~\ref{thm:Monn-formula-nonzero}
  to $M_\kappa(r,u,q_\kappa)$. From the assumption on the growth of $u$ is is easy to
  recognize that
  $$
  M_\kappa(0+,u, q_\kappa)=\int_{\partial B_1}
  q_\kappa^2=\frac{1}{r^{n-1+2\kappa}} \int_{\partial B_r} q_\kappa^2
  $$
  Therefore, using the monotonicity of $M(r,u,q_\kappa)+C\,r$ (see
  Theorem~\ref{thm:Monn-formula-nonzero}) for appropriately chosen
  $C>0$, we will have that
  $$
  C\,r+\frac{1}{r^{n-1+2\kappa}} \int_{\partial B_r} (u-q_\kappa)^2\geq
  \frac{1}{r^{n-1+2\kappa}} \int_{\partial B_r} q_\kappa^2
  $$
  or equivalently
  $$
  \frac{1}{r^{n-1+2\kappa}}\int_{\partial B_r} u^2-2 u q_\kappa\geq -C\,r.
  $$
  After rescaling, we obtain
  $$
  \frac{1}{r^{2\kappa}}\int_{\partial B_1} h_r^2 u_r^2-2h_r r^{\kappa}
  u_r q_\kappa\geq - C\,r,
  $$
 which we can rewrite as
  $$
  \int_{\partial B_1} \frac{h_r}{r^\kappa} u_r^2-2u_r q_\kappa\geq
  -C\,\frac{r^{\kappa+1}}{h_r}.
  $$
Now from the arguments in the proof of Lemma~\ref{lem:consist} we have $H(r)> c\,r^{n-1+2\kappa'}$ for any $\kappa'>\kappa$ and if we choose
$\kappa'<\kappa+1$ we will have that $r^{\kappa+1}/h_r\to 0$. Thus, passing to
the limit over $r=r_j\to 0$, we arrive at
  $$
  -\int_{\partial B_1} q_\kappa^2\geq 0,
  $$
which is a contradiction as $q_\kappa\not=0$.
\end{proof}

\begin{lemma}[$\Sigma_k(v)$ is $F_\sigma$]\label{lem:sing-set-F-sigma-nonzero} For any $v\in \S^\phi$ with $\phi\in C^{k,1}(B_1')$, the set $\Sigma_\kappa(v)$ with $\kappa=2m<k$, $m\in \N$, is of type $F_\sigma$, i.e., it is a union of countably many closed sets.
\end{lemma}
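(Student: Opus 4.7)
The proof will follow the template of Lemma~\ref{lem:sing-set-F-sigma}, with the translated difference $u_k^{x_0}$ from \eqref{eq:ukx0} playing the role that $u(\,\cdot\, - x_0)$ played in the zero-obstacle case. For each $j\in\N$, I would define
$$
E_j := \Bigl\{ x_0 \in \Sigma_\kappa(v)\cap \overline{B_{1-1/j}} \;\Big|\; \tfrac1j\,\rho^\kappa \leq \sup_{|y|=\rho}\lvert u_k^{x_0}(y)\rvert \leq j\,\rho^\kappa \text{ for all } 0<\rho<1-|x_0|\Bigr\},
$$
and aim to prove $\Sigma_\kappa(v)=\bigcup_{j\geq 1} E_j$ together with the closedness of each $E_j$.

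For the first identity, the upper growth bound $\sup_{|y|=\rho}\lvert u_k^{x_0}\rvert \leq C\rho^\kappa$ would be derived by combining the $L^2$ growth of Lemma~\ref{lem:growth-est} with a standard elliptic $L^\infty$ estimate: since $|\Delta u_k^{x_0}|\leq M|x'|^{k-1}$ away from $B_1'$ and $u_k^{x_0}\partial_{x_n}u_k^{x_0}=0$ on $B_1'$, a Moser-type argument gives $\sup_{B_{\rho/2}}\lvert u_k^{x_0}\rvert \leq C(\rho^{-n}G(\rho))^{1/2}+C\rho^{k+1}\leq C\rho^\kappa$ (using $\kappa<k$). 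The matching lower bound is precisely Lemma~\ref{lem:nondeg-sing-nonzero}. Each $x_0\in\Sigma_\kappa(v)$ thus sits in $E_j$ for all sufficiently large $j$.

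To show each $E_j$ is closed, let $x_0^i\in E_j$ with $x_0^i\to x_0$. The key continuity fact is that $x_0'\mapsto u_k^{x_0'}$ depends continuously on $x_0'$ in the topology of local uniform convergence: since $\phi\in C^{k,1}$, the coefficients of the Taylor polynomial $Q_k^{x_0'}$ are continuous in $x_0'$, the harmonic extension $\tilde Q_k^{x_0'}$ (unique by Lemma~\ref{lem:polynom-ext}) inherits this continuity, and $v(\,\cdot\,+x_0')$ varies continuously in $C^1$ on compact subsets of $B_1^+\cup B_1'$ by the optimal regularity. Passing to the limit in the defining inequalities yields the same two-sided bound at $x_0$. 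Since $\Gamma(v)$ is closed, $x_0\in\Gamma(v)$. By Theorem~\ref{thm:blowup-sing-nonzero} it remains only to show $x_0\in\Gamma_\kappa(v)$, i.e.\ $\Phi_k(0+,u_k^{x_0})=n-1+2\kappa$. Upper semicontinuity of $x_0'\mapsto\Phi_k(0+,u_k^{x_0'})$ gives $\Phi_k(0+,u_k^{x_0})\geq n-1+2\kappa$; this upper semicontinuity in turn follows from writing $\Phi_k(0+,\cdot)$ as an infimum over $r\in(0,r_M)$ of the functionals $\Phi_k(r,\cdot)$, each of which is continuous in $x_0'$ at those small $r$ where the growth bounds enforce $H(r,u_k^{x_0'})>r^{n-1+2k}$, so that $\Phi_k(r,u_k^{x_0'})=(1+C_Mr)(n-1+2N(r,u_k^{x_0'}))$. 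Conversely, if $\Phi_k(0+,u_k^{x_0})=n-1+2\kappa'$ with $\kappa<\kappa'\leq k$, Lemma~\ref{lem:growth-est} combined with the elliptic bound above would yield $\sup_{|y|=\rho}\lvert u_k^{x_0}(y)\rvert\leq C\rho^{\kappa'}=o(\rho^\kappa)$, contradicting the inherited lower bound. Hence equality holds, $x_0\in\Gamma_\kappa(v)=\Sigma_\kappa(v)$ by Theorem~\ref{thm:blowup-sing-nonzero}, and $x_0\in E_j$.

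The hard part will be the verification of the two continuity/semicontinuity statements. The continuous dependence of $u_k^{x_0'}$ on $x_0'$ is delicate because it bundles together the translation of $v$, the $x_0'$-dependent Taylor polynomial of the obstacle, and its harmonic extension. The upper semicontinuity of $\Phi_k(0+,\cdot)$ is subtle because of the truncation $\max\{H(r),r^{n-1+2k}\}$ built into the generalized frequency functional of Theorem~\ref{thm:gen-almgren}; one has to argue in the range of small $r$ where the growth estimates ensure that this truncation is inactive, so that $\Phi_k$ reduces to Almgren's frequency up to a smooth factor and is manifestly continuous in the base point for fixed $r$.
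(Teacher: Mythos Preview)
Your proposal is correct and follows essentially the same approach as the paper: the paper's own proof simply defines the sets $E_j$ exactly as you do (with $u_k^{x_0}$ in place of $u(\cdot+x_0)$) and then says ``the proof that $E_j$ are closed is almost identical to that in Lemma~\ref{lem:sing-set-F-sigma}.'' You have in fact supplied more detail than the paper, correctly flagging the two nonzero-obstacle subtleties---the continuous dependence of $u_k^{x_0'}$ on $x_0'$ and the upper semicontinuity of $x_0'\mapsto\Phi_k(0+,u_k^{x_0'})$ in the presence of the truncation---which the paper either takes for granted or asserts without proof elsewhere in the text.
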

\begin{proof} As in the zero-obstacle case (see Lemma~\ref{lem:sing-set-F-sigma}) we show that $\Sigma_\kappa(v)$ is the union of sets $E_j$ of points $x_0\in \Sigma_\kappa(v)\cap \overline{B_{1-1/j}}$ satisfying
\begin{equation}\label{eq:Ej-nonzero}
\tfrac1j\, \rho^{\kappa}\leq \sup_{|x|=\rho} |u^{x_0}_k(x)| < j \rho^\kappa
\end{equation}
for $0<\rho<1-|x_0|$. The proof that $E_j$ are closed is almost identical to that in Lemma~\ref{lem:sing-set-F-sigma}.
\end{proof}

\begin{theorem}[Uniqueness of the homogeneous blowup at singular
  points]\label{thm:uniq-blowup-sing-nonzero} Let $u\in\S_k$ and
  $0\in\Sigma_\kappa(u)$ with $\kappa<k$. Then there exists a unique nonzero
  $p_\kappa\in\P_\kappa$ such that
  $$
  u_r^{(\kappa)}(x):=\frac{u(rx)}{r^\kappa}\to p_\kappa(x).
  $$
\end{theorem}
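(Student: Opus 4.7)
The plan is to mimic the proof of Theorem~\ref{thm:uniq-blowup-sing} from the zero-obstacle case, with the exact Weiss and Monneau monotonicity formulas replaced by their almost-monotone counterparts from Section~\ref{sec:weiss-monneau-type}. First, I would combine the growth bounds of Lemma~\ref{lem:growth-est} (which control $H(r)$ and $D(r)$ at the correct $\kappa$-homogeneous rates) with the rescaled equation
$$
|\Delta u_r^{(\kappa)}(x)| \leq M\,r^{k-\kappa+1}|x'|^{k-1} \quad \text{on } B_{1/r}\setminus B_{1/r}',
$$
whose right-hand side vanishes as $r\to 0+$ since $\kappa<k$, to obtain uniform $W^{1,2}(B_1)\cap C^1_\loc(\R^n)$ bounds for the homogeneous rescalings $u_r^{(\kappa)}(x):=u(rx)/r^\kappa$. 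Extracting a subsequence $r_j\to 0+$ then produces a blowup $u_0$, nonzero thanks to Lemma~\ref{lem:nondeg-sing-nonzero}, which in the limit satisfies the zero-obstacle Signorini problem \eqref{eq:signorini-1}--\eqref{eq:signorini-2} on all of $\R^n$.

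Next I would identify $u_0$ as an element of $\P_\kappa$. By Theorem~\ref{thm:Weiss-monot-nonzero}, $r\mapsto W_\kappa(r,u)+C_M r$ is nondecreasing on $(0,r_M)$, so the limit $W_\kappa(0+,u)$ exists; moreover the computation at the beginning of the proof of Theorem~\ref{thm:Monn-formula-nonzero} shows that $W_\kappa(0+,u)=0$. The scaling identity $W_\kappa(r,u_{r_j}^{(\kappa)})=W_\kappa(rr_j,u)$ combined with the $C^1_\loc$ convergence then yields $W_\kappa(r,u_0)=0$ for every $r>0$. Since $u_0$ solves the zero-obstacle problem on $\R^n$, the equality case of the exact Weiss formula (Theorem~\ref{thm:weiss}) forces $u_0$ to be $\kappa$-homogeneous. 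With $\kappa=2m$, the Liouville-type Lemma~\ref{lem:2m-homogen} promotes $u_0$ to a homogeneous harmonic polynomial; the even symmetry in $x_n$ and nonnegativity on $\{x_n=0\}$ are inherited from $u$, so $p_\kappa:=u_0\in\P_\kappa$.

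Finally, I would upgrade subsequential convergence to full convergence via Monneau's almost-monotone formula. Applying Theorem~\ref{thm:Monn-formula-nonzero} to the pair $(u,p_\kappa)$, the function $M_\kappa(r,u,p_\kappa)+C_M\bigl(1+\|p_\kappa\|_{L^2(B_1)}\bigr)\,r$ is nondecreasing, so $M_\kappa(0+,u,p_\kappa)$ exists. Along the chosen subsequence one directly computes
$$
M_\kappa(r_j,u,p_\kappa)=\int_{\partial B_1}\bigl(u_{r_j}^{(\kappa)}-p_\kappa\bigr)^2\longrightarrow 0,
$$
so this limit is zero, which means $u_r^{(\kappa)}\to p_\kappa$ in $L^2(\partial B_1)$ through every sequence $r\to 0+$. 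Any other subsequential blowup $p_\kappa'$ then agrees with $p_\kappa$ on $\partial B_1$, and by $\kappa$-homogeneity the two coincide on all of $\R^n$, proving uniqueness. The main obstacle is precisely this error-term bookkeeping: unlike the zero-obstacle case where Weiss and Monneau are exactly monotone, here one must confirm that the $O(r)$ perturbations are integrable down to $0$ and that the limits $W_\kappa(0+,u)$ and $M_\kappa(0+,u,p_\kappa)$ both exist and equal zero, which is exactly what the estimates in the proof of Theorem~\ref{thm:Monn-formula-nonzero} provide.
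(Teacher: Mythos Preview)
Your proposal is correct and follows essentially the same route as the paper's own proof: compactness of $u_r^{(\kappa)}$ from the $\kappa$-growth estimates plus the vanishing right-hand side of the rescaled equation, nontriviality from Lemma~\ref{lem:nondeg-sing-nonzero}, identification of any subsequential limit as an element of $\P_\kappa$ via $W_\kappa(0+,u)=0$ and the scaling identity, and uniqueness via the almost-monotone Monneau functional applied to the pair $(u,p_\kappa)$. Your write-up is in fact slightly more explicit than the paper's (you spell out the $O(r)$ bookkeeping and invoke Lemma~\ref{lem:2m-homogen} directly rather than referring back to Theorem~\ref{thm:blowup-sing-nonzero}), but the argument is the same.
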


\begin{proof} Let $u^{(\kappa)}_r(x)\to u_0(x)$ in
  $C^{1,\alpha}_{\loc}(\R^n)$ over a certain subsequence $r=r_j\to
  0+$. The existence of such limit follows from the growth estimate
  $|u(x)|\leq C|x|^\kappa$. Moreover, the nondegeneracy implies that
  $u_0$ in not identically zero. Next, we have that
  $$
  W_\kappa(r, u_0)=\lim_{r_j\to 0+} W_\kappa(r,
  u_{r_j}^{(\kappa)})=\lim_{r_j\to 0+} W_\kappa(rr_j,
  u)=W_\kappa(0+,u)=0,
  $$
  for any $r>0$, implying that $u_0$ is homogeneous of degree
  $\kappa$ solution of
  \eqref{eq:signorini-1}--\eqref{eq:signorini-2}. Repeating the
  arguments in
  Theorem~\ref{thm:blowup-sing-nonzero}, we see that $u_0$ must
  be a polynomial in $\P_\kappa$.

  We now apply Theorem~\ref{thm:Monn-formula-nonzero} to the pair $u$,
  $u_0$. We obtain that the limit
  $M_\kappa(0+,u,u_0)$ exists and can be computed by
  $$
  M_\kappa(0+, u, u_0)=\lim_{r_j\to 0+} M_\kappa(r_j, u,
  u_0)=\lim_{j\to\infty} \int_{\partial B_1} (u^{(\kappa)}_{r_j}-u_0)^2=0.
  $$
  In particular, we obtain that
  $$
  \int_{\partial B_1} (u^{(\kappa)}_r(x)-u_0)^2= M_\kappa(r,
  u,u_0)\to 0
  $$
  as $r\to 0+$ (not just over $r=r_j\to 0+$!). Thus, if $u_0'$ is a
  limit of $u_r^{(\kappa)}$ over another sequence $r=r_j'\to
  0$, we conclude that
  $$
  \int_{\partial B_1} (u_0'-u_0)^2=0.
  $$
  Since both $u_0$ and $u_0'$ are homogeneous of degree $\kappa$,
  they must coincide.
\end{proof}

\begin{theorem}[Continuous dependence of blowup]
\label{thm:cont-dep-blowup-nonzero} Let $v\in\S^\phi$
  with $\phi\in C^{k,1}(B_1')$. For
  $x_0\in\Sigma_\kappa(v)$ let $u_k^{x_0}$ be as in \eqref{eq:ukx0}
  and denote by
  $p^{x_0}_\kappa$ the blowup of
  $u^{x_0}_k$ at $x_0$ as in Theorem~\ref{thm:uniq-blowup-sing-nonzero} so that
  $$
  u^{x_0}_k(x)=p^{x_0}_\kappa(x)+o(|x|^\kappa).
  $$
  Then the mapping $x_0\mapsto p_\kappa^{x_0}$ from
  $\Sigma_\kappa(v)$ to $\P_\kappa$ is continuous. Moreover, for any compact subset $K$ of $\Sigma_\kappa(v)\cap B_1$ there
  exists a modulus of continuity $\sigma=\sigma^K$, $\sigma(0+)=0$ such that
  $$
  |u^{x_0}_k(x)-p^{x_0}_\kappa(x)|\leq \sigma (|x|)|x|^\kappa
  $$
  for any $x_0\in K$.
\end{theorem}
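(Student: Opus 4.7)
The plan is to follow the template of Theorem~\ref{thm:cont-dep-blowup}, using the almost-monotone Monneau functional of Theorem~\ref{thm:Monn-formula-nonzero} in place of the exact Monneau formula. Since $\P_\kappa$ sits in a finite-dimensional vector space, all norms on it are equivalent and we endow it with $\|\cdot\|_{L^2(\partial B_1)}$; the continuity statement then reduces to bounding $\int_{\partial B_1}(p_\kappa^{x_0'}-p_\kappa^{x_0})^2$.

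\textbf{Step 1: Continuity of $x_0\mapsto p_\kappa^{x_0}$.}
Fix $x_0\in \Sigma_\kappa(v)$ and $\epsilon>0$. By Theorem~\ref{thm:uniq-blowup-sing-nonzero}, $M_\kappa(r, u_k^{x_0}, p_\kappa^{x_0})\to 0$ as $r\to 0+$, so we can choose $r_\epsilon>0$ so small that both $M_\kappa(r_\epsilon, u_k^{x_0}, p_\kappa^{x_0}) < \epsilon$ and $C_M(1+\|p_\kappa^{x_0}\|_{L^2(B_1)})\,r_\epsilon < \epsilon$. Because $v\in C^{1,\alpha}$ and the Taylor polynomial $Q_k^{x_0'}$ of the $C^{k,1}$ obstacle $\phi$ varies continuously with $x_0'$, the map $x_0'\mapsto u_k^{x_0'}$ is continuous in $C^0(\overline{B_{r_\epsilon}})$, hence there is $\delta_\epsilon>0$ such that
$$
M_\kappa(r_\epsilon, u_k^{x_0'}, p_\kappa^{x_0}) < 2\epsilon \quad\text{for } x_0'\in \Sigma_\kappa(v),\ |x_0'-x_0|<\delta_\epsilon.
$$
Applying Theorem~\ref{thm:Monn-formula-nonzero} to $u_k^{x_0'}$ with the polynomial $p_\kappa^{x_0}\in\P_\kappa$, the function $r\mapsto M_\kappa(r, u_k^{x_0'}, p_\kappa^{x_0}) + C_M(1+\|p_\kappa^{x_0}\|_{L^2(B_1)})\,r$ is nondecreasing, so for $0<r<r_\epsilon$,
$$
M_\kappa(r, u_k^{x_0'}, p_\kappa^{x_0}) < 2\epsilon + C_M(1+\|p_\kappa^{x_0}\|_{L^2(B_1)})\,r_\epsilon < 3\epsilon.
$$
Sending $r\to 0+$ and using $M_\kappa(0+, u_k^{x_0'}, p_\kappa^{x_0}) = \int_{\partial B_1}(p_\kappa^{x_0'}-p_\kappa^{x_0})^2$ yields the desired $L^2$-continuity, and hence continuity in $\P_\kappa$.

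\textbf{Step 2: Uniform modulus on compact sets.}
The inequality in Step 1, rewritten in rescaled variables $w_r^{x_0'}(x):=u_k^{x_0'}(rx)/r^\kappa$, reads $\|w_r^{x_0'}-p_\kappa^{x_0'}\|_{L^2(\partial B_1)}\leq C\epsilon^{1/2}$, valid whenever $x_0'\in \Sigma_\kappa(v)\cap B_{\delta_\epsilon}(x_0)$ and $0<r<r_\epsilon$. Covering $K$ by finitely many such neighborhoods produces uniform parameters $r_\epsilon^K, \delta_\epsilon^K$. The rescalings $w_r^{x_0'}$ satisfy the Signorini complementarity conditions on $B_1'$ together with $|\Delta w_r^{x_0'}(x)|\leq M r|x'|^{k-1}\to 0$ as $r\to 0$, and have $C^{1,\alpha}(\overline{B_{1/2}^\pm\cup B_{1/2}'})$-norms uniformly bounded in $x_0'\in K$ and $r\in(0,r_\epsilon^K)$. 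Any subsequential $C^1_{\loc}$-limit as $r\to 0$ therefore lies in $\S$ and, by the $L^2$-closeness on $\partial B_1$, shares its boundary trace with the corresponding $p_\kappa^{x_0'}\in\P_\kappa\subset\S$; the uniqueness of solutions to the normalized Signorini problem with prescribed Dirichlet data forces the limit to coincide with $p_\kappa^{x_0'}$ in $B_{1/2}$. A standard compactness/contradiction argument, carried out exactly as in the last part of the proof of Theorem~\ref{thm:cont-dep-blowup}, upgrades the $L^2$-smallness to the uniform bound $\|w_r^{x_0'}-p_\kappa^{x_0'}\|_{L^\infty(B_{1/2})}\leq C_\epsilon$ with $C_\epsilon\to 0$ as $\epsilon\to 0$. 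Rescaling back yields the modulus $\sigma^K$.

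\textbf{Main obstacle.}
The principal novelty relative to the zero-obstacle case is the defect $-C_M(1+\|p_\kappa\|_{L^2(B_1)})$ in Theorem~\ref{thm:Monn-formula-nonzero}. We must select $r_\epsilon$ small enough to make not only $M_\kappa(r_\epsilon,u_k^{x_0},p_\kappa^{x_0})$ but also $C_M(1+\|p_\kappa^{x_0}\|)r_\epsilon$ of order $\epsilon$; this requires an a priori bound on $\|p_\kappa^{x_0}\|_{L^2(B_1)}$, which is provided by the growth estimate $|u_k^{x_0}(x)|\leq C|x|^\kappa$ of Lemma~\ref{lem:growth-est} (combined with the $L^2$-convergence of the rescalings to the blowup). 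A minor additional wrinkle is that $u_k^{x_0'}$ lies in $\S_k$ only on the shrunken ball $B_{1-|x_0'|}$ rather than on $B_1$, but since $K$ is at positive distance from $\partial B_1$ a uniform working radius is available for all $x_0'\in K$, and this causes no difficulty.
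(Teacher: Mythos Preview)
Your proof is correct and follows essentially the same route as the paper's: fix $r_\epsilon$ so that $M_\kappa(r_\epsilon,u_k^{x_0},p_\kappa^{x_0})$ is small, perturb $x_0$ to $x_0'$ by continuity of $u_k^{x_0'}$ in $x_0'$, use the almost-monotonicity of Theorem~\ref{thm:Monn-formula-nonzero} to propagate down to $r\to 0$, then cover $K$ finitely and run the compactness argument. Your choice to absorb the defect $C_M(1+\|p_\kappa^{x_0}\|)r_\epsilon$ into $\epsilon$ from the start is slightly cleaner than the paper's bookkeeping (which carries $2\epsilon+C\,r_\epsilon$ throughout), but the substance is identical. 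One small slip: your bound $|\Delta w_r^{x_0'}(x)|\le M r|x'|^{k-1}$ is copied from the rescaling in Lemma~\ref{lem:resc-unif-bound}, but for the \emph{homogeneous} rescaling $w_r=u_k(rx)/r^\kappa$ the correct bound is $|\Delta w_r(x)|\le M r^{k+1-\kappa}|x'|^{k-1}$; since $\kappa<k$ this still vanishes as $r\to 0$, so the argument is unaffected.
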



\begin{proof} The proof is similar to that of Theorem~\ref{thm:cont-dep-blowup}. Given $x_0$ and  $\epsilon>0$ fix $r_\epsilon=r_\epsilon(x_0)>0$ such that
  $$
  M_\kappa(r_\epsilon, u^{x_0}_k,
  p_\kappa^{x_0}):=\frac{1}{r_\epsilon^{n-1+2\kappa}}\int_{\partial
    B_{r_\epsilon}} (u^{x_0}_k(x)-p^{x_0}_\kappa)^2<\epsilon.
  $$
  Then there exists $\delta_\epsilon=\delta_\epsilon(x_0)$ such that if
  $x_0'\in\Sigma_\kappa(u)$ and $|x_0'-x_0|<\delta_\epsilon$ then
  $$
  M_\kappa(r_\epsilon, u^{x_0'}_k,
  p_\kappa^{x_0})=\frac{1}{r_\epsilon^{n-1+2\kappa}}\int_{\partial
    B_{r_\epsilon}} (u^{x_0'}_k-p^{x_0}_\kappa)^2<2\epsilon.
  $$
This follows from the continuous dependence of $u^{x_0}_k$ on
$x_0\in\Gamma(v)$, which in turn is a consequence of $C^k$
differentiability of the thin obstacle $\phi$.

  From Theorem~\ref{thm:Monn-formula-nonzero}, we will have that
  $$
  M_\kappa(r, u^{x_0'}_k, p_\kappa^{x_0})< 2\epsilon+ C\,r_\epsilon,\quad
  0<r<r_\epsilon
  $$
for a constant $C=C(x_0)$ depending on $L^2$ norms of $u^{x_0'}_k$ and
$p_\kappa^{x_0}$, which can be made uniform for $x_0'$ in a small
neighborhood of $x_0$ as $u_k^{x_0'}$ depends continuously on $x_0'$.
  Passing $r\to 0$ we will therefore obtain
  $$
  M(0+,u^{x_0'}_k,p_\kappa^{x_0})=\int_{\partial B_1}
  (p_\kappa^{x_0'}-p_\kappa^{x_0})^2\leq 2\epsilon+C\,r_\epsilon.
  $$
  This shows the first part of the theorem.

  To show the second part, we notice that we have
  \begin{align*}
  \|u^{x_0'}_k-p^{x_0'}_\kappa\|_{L^2(\partial B_r)}&\leq
  \|u^{x_0'}_k-p^{x_0}_\kappa\|_{L^2(\partial B_r)}
  +\|p^{x_0'}_\kappa-p^{x_0'}_\kappa\|_{L^2(\partial B_r)}\\
 &\leq
  2(2\epsilon +C\,r_\epsilon)^{\frac12}r^{\frac{n-1}2+\kappa},
\end{align*}
for $|x_0'-x_0|<\delta_\epsilon$, $0<r<r_\epsilon$, or equivalently
\begin{equation}\label{eq:w-p-L2-nonzero}
 \|w^{x_0'}_r-p^{x_0'}_\kappa\|_{L^2(\partial B_1)}\leq 2(2\epsilon+C r_\epsilon)^{\frac12},
\end{equation}
where
$$
w^{x_0'}_r(x):=\frac{u^{x_0'}_k(rx)}{r^\kappa}.
$$
Making a finite cover of the compact $K$ with balls $B_{\delta_\epsilon(x_0^i)}(x_0^i)$ for some $x_0^i\in K$, $i=1,\ldots,N$, we see that \eqref{eq:w-p-L2-nonzero} is satisfied for all $x_0'\in K$, $r<r_\epsilon^K:=\min\{r_\epsilon(x_0^i)\mid i=1,\ldots,N\}$ and $C=C^K:=\max\{C(x_0^i)\mid i=1,\ldots, N\}$.

Now notice that
$$
w^{x_0'}_r\in\S_k,\quad
p_\kappa^{x_0'}\in \S_k
$$
with $C^{1,\alpha}(\overline {B_1})$ norms of $w^{x_0'}_r$ and
$p_\kappa^{x_0'}$ uniformly
bounded and the estimate
$$
|\Delta w^{x_0'}_r|\leq M r^{k-\kappa}\to 0,\quad\text{a.e.\ in }B^\pm_1.
$$
Thus, using a compactness argument, we can show that
$$
\|w^{x_0'}_r-p^{x_0'}_\kappa\|_{L^\infty(B_{1/2})}\leq C_\epsilon,
$$
for all $x_0'\in K$, $r<r_\epsilon^K$ and $C_\epsilon\to
0$ as $\epsilon\to 0$. It is now easy to
  see that this implies the second part of the theorem.
\end{proof}

At this point we are ready to present the proofs of the main
results.

\medskip

\begin{proof}[Proof of Theorem~\ref{thm:k-diff-sing-p-nonzero}] It is
obtained by combining Theorems~\ref{thm:uniq-blowup-sing-nonzero}
and \ref{thm:cont-dep-blowup-nonzero} above.
\end{proof}

\begin{proof}[Proof of Theorem~\ref{thm:sing-points-nonzero}] The
  proof is almost the same as for
  Theorem~\ref{thm:sing-points}, but is now based on
  Theorem~\ref{thm:k-diff-sing-p-nonzero} instead of
  Theorem~\ref{thm:k-diff-sing-p}.

We give few details. For any
  $x_0\in\Sigma_\kappa(v)$ let the polynomial
  $p^{x_0}_\kappa\in\P_\kappa$ be as
  in Theorem~\ref{thm:cont-dep-blowup-nonzero}. Write it in the
  expanded form
  $$
  p^{x_0}_\kappa(x)=\sum_{|\alpha|=\kappa}
  \frac{a_\alpha(x_0)}{\alpha!}x^\alpha.
  $$
  Then the coefficients $a_\alpha(x)$ are continuous on
  $\Sigma_{\kappa}(v)$. Moreover, if
  $x\in \Sigma_\kappa(v)$, we have $x\in B_1'$ and therefore  $\tilde
  Q_k^{x_0}(x)=Q_k^{x_0}(x)$ in the definition \eqref{eq:ukx0}, which
  implies that
$$
u_k^{x_0}(x-x_0)=v(x)-\phi(x)=0.
$$
Hence, from Theorem~\ref{thm:cont-dep-blowup-nonzero} we obtain
  $$
  |p^{x_0}_\kappa(x-x_0)|\leq \sigma(|x-x_0|)|x-x_0|^\kappa,\quad
  x, x_0\in K.
  $$
for a compact subset $K\subset \Sigma_\kappa(v)$. Furthermore, if we define
  $$
  f_\alpha(x)=\begin{cases} 0 & |\alpha|<\kappa\\ a_\alpha(x) &
    |\alpha|=\kappa,
\end{cases}\qquad x\in\Sigma_k(v),
$$
as in the zero-obstacle case, then we have the following compatibility lemma .

\begin{lemma}\label{lem:Whitney-compat-nonzero}
Let $K=E_j$ as in Lemma~\ref{lem:sing-set-F-sigma-nonzero}. Then for any $x_0,x\in K$
\begin{equation}\label{eq:compat-1-nonzero}
f_\alpha(x)=\sum_{|\beta|\leq
  \kappa-|\alpha|}\frac{f_{\alpha+\beta}(x_0)}{\beta!}(x-x_0)^\beta +R_\alpha(x,x_0)
\end{equation}
with
\begin{equation}\label{eq:compat-2-nonzero}
|R_\alpha(x,x_0)|\leq \sigma_\alpha(|x-x_0|)|x-x_0|^{\kappa-|\alpha|},
\end{equation}
where $\sigma_\alpha=\sigma_\alpha^K$ is a certain modulus of continuity.
\end{lemma}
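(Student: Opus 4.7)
The proof parallels Lemma~\ref{lem:Whitney-compat} in the zero-obstacle case, with one extra ingredient: the functions $u_k^{x_0}$ based at different points $x_0$ are not translates of one another, and must be compared using the $C^{k,1}$ regularity of $\phi$. I split into two cases.

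When $|\alpha|=\kappa$, one has $R_\alpha(x,x_0)=a_\alpha(x)-a_\alpha(x_0)$, and the required modulus is furnished directly by the continuity of $x_0\mapsto p_\kappa^{x_0}$ on $K$ established in Theorem~\ref{thm:cont-dep-blowup-nonzero}.

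When $0\le|\alpha|<\kappa$, only the multi-indices $\gamma>\alpha$ with $|\gamma|=\kappa$ contribute to the sum in \eqref{eq:compat-1-nonzero}, so
$$
R_\alpha(x,x_0)=-\partial^\alpha p_\kappa^{x_0}(x-x_0).
$$
I argue by contradiction. If no uniform modulus $\sigma_\alpha$ works on $K=E_j$, there exist $\delta>0$ and sequences $x_0^i,x^i\in E_j$ with $\rho_i:=|x^i-x_0^i|\to 0$ and
$$
|\partial^\alpha p_\kappa^{x_0^i}(h^i)|\ge \delta\,\rho_i^{\kappa-|\alpha|},\qquad h^i:=x^i-x_0^i.
$$
By $(\kappa-|\alpha|)$-homogeneity this becomes $|\partial^\alpha p_\kappa^{x_0^i}(\xi^i)|\ge\delta$ with $\xi^i:=h^i/\rho_i$. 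After extracting subsequences $x_0^i\to x_0\in K$ and $\xi^i\to\xi_0\in\partial B_1$, the continuous dependence of $p_\kappa^{x_0}$ on $x_0$ gives $|\partial^\alpha p_\kappa^{x_0}(\xi_0)|\ge\delta$. I then introduce the rescalings $w^i(x):=u_k^{x_0^i}(\rho_i x)/\rho_i^\kappa$; Theorem~\ref{thm:cont-dep-blowup-nonzero} yields $w^i\to p_\kappa^{x_0}$ locally uniformly on $\R^n$.

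The main obstacle is to show that $\xi_0\in\Sigma_\kappa(p_\kappa^{x_0})$, for then Theorem~\ref{thm:blowup-sing} applied to $p_\kappa^{x_0}\in\P_\kappa\subset\S$ forces $\partial^\alpha p_\kappa^{x_0}(\xi_0)=0$ for every $|\alpha|<\kappa$, yielding the contradiction. The growth bounds \eqref{eq:Ej-nonzero} at $x^i$ are written in terms of $u_k^{x^i}$, whereas $w^i$ arises from a translate of $u_k^{x_0^i}$; unwinding \eqref{eq:ukx0} shows that $u_k^{x^i}(y)-u_k^{x_0^i}(y+h^i)$ is a polynomial in $y$ of degree $\le k$ built from the Taylor-polynomial comparisons $Q_k^{x^i}(y')-Q_k^{x_0^i}(y'+h^i)$ together with their harmonic extensions. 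The $C^{k,1}$ hypothesis on $\phi$ makes the coefficient of $y^\beta$ of order $|h^i|^{k+1-|\beta|}$, so
$$
|u_k^{x^i}(y)-u_k^{x_0^i}(y+h^i)|\le C\,\rho_i^{k+1}\quad\text{for }|y|\le C\rho_i.
$$
Since $k+1>\kappa$, after dividing by $\rho_i^\kappa$ this correction is negligible compared to the $\rho_i^\kappa$-scaled quantities in \eqref{eq:Ej-nonzero}. Hence, for every fixed $\tilde\rho>0$ and all sufficiently large $i$,
$$
\tfrac{1}{2j}\,\tilde\rho^\kappa\le \sup_{|y-\xi^i|=\tilde\rho}|w^i(y)|\le 2j\,\tilde\rho^\kappa.
$$
Passing to the $C^1_{\loc}$ limit $w^i\to p_\kappa^{x_0}$ gives the same two-sided bound for $p_\kappa^{x_0}$ centered at $\xi_0$ on every scale, placing $\xi_0$ in $\Sigma_\kappa(p_\kappa^{x_0})$ and closing the contradiction.
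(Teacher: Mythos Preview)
Your proof is correct and follows essentially the same approach as the paper's: both argue by contradiction, introduce the rescalings $w^i$, and use the $C^{k,1}$ regularity of $\phi$ to compare $u_k^{x^i}$ with the translate of $u_k^{x_0^i}$ so that the $E_j$ bounds at $x^i$ transfer to a two-sided $\kappa$-growth for $p_\kappa^{x_0}$ at $\xi_0$. The only cosmetic difference is that the paper names the second rescaling $\tilde w^i(x)=u_k^{x^i}(\rho_i x)/\rho_i^\kappa$ and verifies $w^i(\cdot+\xi^i)-\tilde w^i\to 0$ via a direct Taylor-remainder estimate, whereas you phrase the same comparison through the coefficient bounds $O(|h^i|^{k+1-|\beta|})$ on the polynomial $u_k^{x^i}(y)-u_k^{x_0^i}(y+h^i)$; these are equivalent computations.
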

\begin{proof} There are few additional details compared to Lemma~\ref{lem:Whitney-compat}, so we give a complete proof.
	
1) In the case $|\alpha|=\kappa$ we have
$$
R_\alpha(x,x_0)=a_\alpha(x)-a_\alpha(x_0)
$$
and the statement follows from the continuity of $a_\alpha(x)$ on $\Sigma_k(v)$.

\smallskip
2) For $0\leq |\alpha|<\kappa$ we have

$$
R_\alpha(x,x_0)=-\sum_{\gamma>\alpha, |\gamma|=\kappa} \frac{a_\gamma(x_0)}{(\gamma-\alpha)!}(x-x_0)^{\gamma-\alpha}= - \partial^\alpha p^{x_0}_\kappa (x-x_0).
$$
Now suppose that there exists no modulus of continuity $\sigma_\alpha$ such that \eqref{eq:compat-2-nonzero} is satisfied for all $x_0, x\in K$. Then there exists $\delta>0$ and a sequence $x_0^i, x^i\in K$ with 
$$
|x^i-x_0^i|=:\rho_i\to 0
$$
such that
\begin{equation}\label{eq:compat-contr-nonzero}
\Big|\sum_{\gamma>\alpha, |\gamma|=\kappa} \frac{a_\gamma(x_0^i)}{(\gamma-\alpha)!}(x^i-x_0^i)^{\gamma-\alpha}\Big|\geq \delta |x^i-x_0^i|^{\kappa-|\alpha|}.
\end{equation}
Consider the rescalings
$$
w^i(x)=\frac{u^{x_0^i}_k(\rho_i x)}{\rho_i^\kappa},\quad \xi^i=(x^i-x_0^i)/\rho_i.
$$
Without loss of generality we may assume that $x_0^i\to x_0\in K$ and $\xi^i\to \xi_0\in \partial B_1$.
From Theorem~\ref{thm:cont-dep-blowup-nonzero} we have that
$$
|w^i(x)-p^{x_0^i}_\kappa(x)|\leq \sigma(\rho_i|x|)|x|^\kappa
$$
and therefore
\begin{equation}\label{eq:wip}
	w^i(x)\to p^{x_0}_\kappa(x)\quad\text{in } L^\infty_{\loc}(\R^n).
\end{equation}
We also consider the rescalings at $x^i$ instead of $x_0^i$
$$
\tilde w^i(x)=\frac{u^{x^i}_k(\rho_i x)}{\rho_i^\kappa}.
$$
We then claim that the $C^{k,1}$ regularity of the thin obstacle $\phi$ implies that 
\begin{equation}\label{eq:wixi}
w^i(x+\xi^i)-\tilde w^i(x)\to 0\quad\text{in } L^\infty_{\loc}(\R^n).
\end{equation}
Indeed, if $Q^{x_0}_k(x')$ denotes the $k$-th Taylor polynomial of $\phi(x')$ at $x_0$, then
\begin{align*}
&\frac{Q^{x_0^i}_k(\rho_i(x'+\xi_i))-Q^{x^i}_k(\rho_i x')}{\rho_i^\kappa}\\&=\frac{\phi(x_0^i+\rho_i(x'+\xi_i))+o(\rho_i^k|x'+\xi_i|^k)-\phi(x^i+\rho_ix')-o(\rho_i^k|x'|^k)}{\rho_i^\kappa}\\
&=o(\rho_i^{k-\kappa})\to 0
\end{align*}
and this implies the convergence \eqref{eq:wixi}, if we write the explicit definition of $w^i$ using \eqref{eq:ukx0}. Further, note that since $x^i\in E_j$, we have 
$$
\tfrac1j \rho^\kappa\leq \sup_{|x|=\rho} |u^{x^i}_k(x)|\leq j \rho^\kappa
$$
and therefore
$$
\tfrac1j \rho^\kappa\leq \sup_{|x|=\rho} |\tilde w^i(x)|\leq j \rho^\kappa.
$$
Passing to the limit in \eqref{eq:wip}--\eqref{eq:wixi} we obtain that
$$
\tfrac1j \rho^\kappa\leq \sup_{|x|=\rho} p^{x_0}_\kappa(x+\xi_0)\leq j \rho^\kappa,\quad 0<\rho<\infty.
$$
This implies that $\xi_0$ is a point of frequency $\kappa=2m$ for the polynomial $p^{x_0}_\kappa$ and by Theorem~\ref{thm:blowup-sing} we have that $\xi_0\in \Sigma_\kappa (p^{x_0}_\kappa)$. In particular,
$$
\partial^\alpha p^{x_0}_\kappa(\xi_0)=0,\quad\text{for }|\alpha|<\kappa.
$$
However, dividing both parts of \eqref{eq:compat-contr-nonzero} by $\rho_i^{\kappa-|\alpha|}$ and passing to the limit, we obtain that
$$
|\partial^\alpha p^{x_0}_\kappa(\xi_0)|=\Big|\sum_{\gamma>\alpha, |\gamma|=\kappa} \frac{a_\gamma(x_0)}{(\gamma-\alpha)!}(\xi_0)^{\gamma-\alpha}\Big|\geq \delta,
$$
a contradiction.
\end{proof}

We then apply Whitney's extension theorem and the implicit function theorem as in the proof of Theorem~\ref{thm:sing-points} to complete the proof.
\end{proof}

\section*{Concluding remarks and open problems}

The intent of this closing section is to provide a summarizing
overview of the state of our knowledge on the lower dimensional
obstacle problem. We also point to some open problems in the study
of the free boundary $\Gamma(u)$ for solutions of
\eqref{eq:signorini-1}--\eqref{eq:0-fbp}, and more generally for
\eqref{eq:signorini-1-nonzero}--\eqref{eq:0-fbp-nonzero}.

The regularity of $\Gamma_{2-\frac12}(u)$, i.e. of that portion of
the free boundary which is composed of regular points  has been
proved in \cite{ACS} for the zero obstacle and in \cite{CSS} for a
nonzero one. The present paper studies the singular set $\Sigma(u)$,
which is the collection of those free boundary points where the
coincidence set $\Lambda(u)=\{u=\phi\}$ has a vanishing
$\H^{n-1}$-density. 

What remains to study is the set of nonregular nonsingular points,
i.e. the set
$$
\Gamma(u)\setminus (\Gamma_{2-\frac12}(u)\cup
\Sigma(u))=\bigcup_{\substack{\kappa>2-\frac12,\\ \kappa\not=2m, m\in\N}}
\Gamma_\kappa(u).
$$
In the case of the nonzero thin obstacle $\phi\in C^{k,1}$ it is
reasonable to limit ourselves to $\kappa<k$, since at the free
boundary points in $\Gamma^{(k)}_k(u)$ even the blowups are not
properly defined.

\subsection*{Possible values of $\kappa$} First, one must identify the
possible values of $\kappa$ which can be reduced to a classification
of homogeneous global solutions of the Signorini problem with zero obstacle by Propositions~\ref{prop:blowup-homogen} and
\ref{prop:blowup-homogen-nonzero}. A partial classification of
global solutions (convex in $x'$) has been given in \cite{CSS} and
\cite{ACS}, which excluded the interval $(2-\frac12,2)$ from the
range of possible values of $\kappa$.
However, a full classification
is needed for obtaining the full range of possible values of
$\kappa$. As we mentioned earlier in the text it is plausible that
the only possible values are
$$
\kappa\in\{2m-\frac12 \mid m\in \mathbb N\}\cup\{2m \mid m\in \mathbb N\}.
$$
This fact is easy to establish when the dimension
$n=2$, see Remark~\ref{rem:poss_freq_dim2}, but is unknown in higher dimensions.

\subsection*{Free boundary}

We know that the only possible global solutions for $\kappa\in\{2m\mid  m\in\N\}$ are the polynomials $p_\kappa\in\P_\kappa$, see Lemma~\ref{lem:2m-homogen}. The other known global solutions are the rotations in $x'$-variable of
$$
\hat u_\kappa(x)=\Re(x_1+i\, |x_n|)^{\kappa},\quad \text{for}\quad\kappa\in\{2m-\frac12 \mid
m\in\N\}.
$$
If these are the only global solutions for this range of $\kappa$'s then it is plausible that
$$
\text{$\Gamma_\kappa(u)$ is locally a $(n-2)$-dimensional $C^1$-manifold
  for }\kappa\in\{2m-\frac12\mid m\in\N\},
$$
or at least Lipschitz regular. However, the true picture may be more complicated than that.

\subsection*{Degenerate points} One complication that may occur in dimensions
$n\geq 3$ is that a blowup $u_0$ may vanish identically on
$\R^{n-1}\times\{0\}$ for some $u\in\S$. This cannot happen in
dimension $n=2$, see Remark~\ref{rem:poss_freq_dim2}, however, the
possibility in higher dimensions is unknown to the authors. If a
blowup $u_0$ for $u\in\S$ vanishes on $\R^{n-1}\times\{0\}$, we
call the origin a \emph{degenerate free boundary point} of $u$. Such points
are characterized by the property
that the coincidence set $\Lambda(u)$ has a $\H^{n-1}$-density $1$ there.
At degenerate points, it is easy to see that one must have
$\kappa\in\{2m+1\mid m\in\N\}$ and that $u_0$ must coincide  in
$\R^{n-1}\times[0,\infty)$ with a harmonic polynomial $q_\kappa$ from the class
$$
\Q_\kappa=\{q_\kappa \mid \Delta q_\kappa=0,\ x\cdot\nabla
q_\kappa-\kappa q_\kappa=0,\
q_\kappa(x',0)=0,\  -\partial_{x_n} q_\kappa(x',0)\geq 0\}.
$$
It would be interesting to study the set of such points, provided they exist.

\begin{bibdiv}
\begin{biblist}
\bib{Al}{article}{
   author={Almgren, Frederick J., Jr.},
   title={Dirichlet's problem for multiple valued functions and the
   regularity of mass minimizing integral currents},
   conference={
      title={Minimal submanifolds and geodesics (Proc. Japan-United States
      Sem., Tokyo, 1977)},
   },
   book={
      publisher={North-Holland},
      place={Amsterdam},
   },
   date={1979},
   pages={1--6},
}

\bib{ACF}{article}{
   author={Alt, Hans Wilhelm},
   author={Caffarelli, Luis A.},
   author={Friedman, Avner},
   title={Variational problems with two phases and their free boundaries},
   journal={Trans. Amer. Math. Soc.},
   volume={282},
   date={1984},
   number={2},
   pages={431--461},
   issn={0002-9947},
}

\bib{AC}{article}{
   author={Athanasopoulos, I.},
   author={Caffarelli, L. A.},
   title={Optimal regularity of lower dimensional obstacle problems},
   language={English, with English and Russian summaries},
   journal={Zap. Nauchn. Sem. S.-Peterburg. Otdel. Mat. Inst. Steklov.
   (POMI)},
   volume={310},
   date={2004},
   number={Kraev. Zadachi Mat. Fiz. i Smezh. Vopr. Teor. Funkts. 35
   [34]},
   pages={49--66, 226},
   issn={0373-2703},
   translation={
      journal={J. Math. Sci. (N. Y.)},
      volume={132},
      date={2006},
      number={3},
      pages={274--284},
      issn={1072-3374},
   },
}

\bib{ACS}{article}{
  author={Athanasopoulos, I.},
  author={Caffarelli, L. A.},
  author={Salsa, S.},
  title={The structure of the free boundary for lower dimensional
  obstacle problems},
  date={2007},
  journal={Amer. J. Math.},
  status={to appear},
}

\bib{Ca}{article}{
   author={Caffarelli, L. A.},
   title={Further regularity for the Signorini problem},
   journal={Comm. Partial Differential Equations},
   volume={4},
   date={1979},
   number={9},
   pages={1067--1075},
   issn={0360-5302},
}

 \bib{Ca2}{article}{
    author={Caffarelli, L. A.},
    title={The obstacle problem revisited},
    journal={J. Fourier Anal. Appl.},
    volume={4},
    date={1998},
    number={4-5},
    pages={383--402},
    issn={1069-5869},
 }

\bib{CR}{article}{
   author={Caffarelli, L. A.},
   author={Rivi{\`e}re, N. M.},
   title={Asymptotic behaviour of free boundaries at their singular points},
   journal={Ann. Math. (2)},
   volume={106},
   date={1977},
   number={2},
   pages={309--317},
}

\bib{CSS}{article}{
   author={Caffarelli, Luis},
   author={Salsa, Sandro},
   author={Silvestre, Luis},
   title={Regularity estimates for the solution and the free boundary of the obstacle problem for the fractional Laplacian},
   journal={Invent. Math.},
   volume={171},
   number={2},
   pages={425--461},
   date={2008},
   issn={0020-9910}
}

\bib{CS}{article}{
   author={Caffarelli, Luis},
   author={Silvestre, Luis},
   title={An extension problem related to the fractional Laplacian},
   journal={Comm. Partial Differential Equations},
   volume={32},
   date={2007},
   number={7-9},
   pages={1245--1260},
   issn={0360-5302},
}

\bib{CT}{book}{
    author={Cont, Rama},
    author={Tankov, Peter},
    title={Financial modelling
     with jump processes},
    series={Chapman and Hall, CRC Financial Mathematics Series},
    publisher={Chapman and Hall, CRC, Boca Raton, FL},
    date={2004},
    pages={xvi+535},
    isbn={1-5848-8413-4 91-02},

}

\bib{DL}{book}{
    author={Duvat, G.},
    author={Lions, J. L.},
    title={Les inequations en mechanique et en physique},
    publisher={Dunod, Paris},
    date={1972}
}

\bib{Fr}{book}{
   author={Friedman, Avner},
   title={Variational principles and free-boundary problems},
   series={Pure and Applied Mathematics},
   note={A Wiley-Interscience Publication},
   publisher={John Wiley \& Sons Inc.},
   place={New York},
   date={1982},
   pages={ix+710},
   isbn={0-471-86849-3},
}

\bib{GL}{article}{
   author={Garofalo, Nicola},
   author={Lin, Fang-Hua},
   title={Monotonicity properties of variational integrals, $A\sb p$ weights
   and unique continuation},
   journal={Indiana Univ. Math. J.},
   volume={35},
   date={1986},
   number={2},
   pages={245--268},
   issn={0022-2518},
}

\bib{GL2}{article}{
   author={Garofalo, Nicola},
   author={Lin, Fang-Hua},
   title={Unique continuation for elliptic operators: a
   geometric-variational approach},
   journal={Comm. Pure Appl. Math.},
   volume={40},
   date={1987},
   number={3},
   pages={347--366},

}

\bib{KO}{book}{
  author={Kikuchi, N.},
  author={Oden, J. T.},
  title={Contact problems in
elasticity: a study of variational inequalities and finite element
methods},
  series={SIAM Studies in Applied Mathematics, 8},
  publisher={Society for Industrial and Applied Mathematics},
  place={Philadelphia, PA.},
  date={1988},
  pages={xiv+495},
  isbn={0-89871-202-5},

}

\bib{Mo}{article}{
   author={Monneau, R.},
   title={On the number of singularities for the obstacle problem in two
   dimensions},
   journal={J. Geom. Anal.},
   volume={13},
   date={2003},
   number={2},
   pages={359--389},
   issn={1050-6926},
}
\bib{Mo2}{article}{
   author={Monneau, R.},
   title={Pointwise estimates for Laplace equation. 
Applications to the free boundary of the 
obstacle problem with Dini coefficients},
   date={2007},
status={preprint},
}

\bib{Si}{article}{
   author={Silvestre, Luis},
   title={Regularity of the obstacle problem for a fractional power of the
   Laplace operator},
   journal={Comm. Pure Appl. Math.},
   volume={60},
   date={2007},
   number={1},
   pages={67--112},
   issn={0010-3640},
}

\bib{Ur}{article}{
   author={Ural{\cprime}tseva, N. N.},
   title={On the regularity of solutions of variational inequalities},
   language={Russian},
   journal={Uspekhi Mat. Nauk},
   volume={42},
   date={1987},
   number={6(258)},
   pages={151--174, 248},
   issn={0042-1316},
   translation={
       journal={Russian Math. Surveys},
       volume={42},
       date={1987},
       number={6},
       pages={191--219}
   },
}

\bib{We}{article}{
   author={Weiss, Georg S.},
   title={A homogeneity improvement approach to the obstacle problem},
   journal={Invent. Math.},
   volume={138},
   date={1999},
   number={1},
   pages={23--50},
   issn={0020-9910},
}

\bib{Wh}{article}{
   author={Whitney, Hassler},
   title={Analytic extensions of differentiable functions defined in closed
   sets},
   journal={Trans. Amer. Math. Soc.},
   volume={36},
   date={1934},
   number={1},
   pages={63--89},
   issn={0002-9947},
}

\end{biblist}
\end{bibdiv}

\end{document}